\numberwithin{equation}{section}
\newtheorem{theorem}{Theorem}[section]
\theoremstyle{plain}
\newtheorem{thm}[theorem]{Theorem } 
\newtheorem{prop}[theorem]{Proposition }
\newtheorem{lem}[theorem]{Lemma }
\newtheorem{rem}[theorem] {Remark}
\begin{document}

\title{Matrix solutions of the cubic Szeg\H{o} equation on the real line}
\author{Ruoci Sun\footnote{School of Mathematics, Georgia Institute of Technology, Atlanta, USA. Email: ruoci.sun.16@normalesup.org}}
\date{}

\maketitle

\noindent $\mathbf{Abstract}$ \quad This paper is dedicated to studying matrix solutions of the cubic Szeg\H{o} equation on the line in Pocovnicu \cite{poAPDE2011, poDCDS2011} and  G\'erard--Pushnitski \cite{GerPush2023},  leading to the following matrix Szeg\H{o} equation on $\mathbb{R}$,
\begin{equation*}
 i \partial_t U = \Pi_{\geq 0} \left(U  U ^* U \right), \quad  \widehat{\left(\Pi_{\geq 0}  U\right)}(\xi )= \mathbf{1}_{\xi \geq 0}\hat{U}(\xi )\in \mathbb{C}^{M \times N} .
 \end{equation*}Inspired from the space-periodic case in Sun \cite{SUNMSzego}, we establish its Lax pair structure via double Hankel operators and Toeplitz operators. Then the explicit formula in \cite{GerPush2023} can be extended to two equivalent formulas  in the matrix equation case, which both express every solution explicitly in terms of its initial datum and the time variable.\\

\noindent $\mathbf{Keywords}$ \quad Szeg\H{o} operator, Lax pair, explicit formula, Hankel operators, Toeplitz operators, Lax--Beurling shift semigroup.\\
\tableofcontents

\newpage

\section{Introduction}
\noindent For any positive integers $M,N \in \mathbb{N}_+$, the cubic $M \times N$ matrix Szeg\H{o} equation on the real line reads as
\begin{equation}\label{MSzego}
 i \partial_t U = \Pi_{\geq 0} \left(U  U ^* U \right), \quad U=U(t,x) \in \mathbb{C}^{M \times N}, \quad (t,x)\in \mathbb{R} \times \mathbb{R},
 \end{equation}where $\Pi_{\geq 0} = \Pi_{\geq 0}^{\mathbb{R}}: L^2(\mathbb{R}; \mathbb{C}^{M \times N}) \to L^2(\mathbb{R}; \mathbb{C}^{M \times N})$ denotes Szeg\H{o} projector on  $L^2(\mathbb{R}; \mathbb{C}^{M \times N})$, which is the Fourier multiplier of symbol Heaviside step function, i.e.  $\Pi_{\geq 0} = \mathbf{1}_{[0, +\infty)}(\mathrm{D})$ with $\mathrm{D}=-i \partial_x$, i.e.
\begin{equation}\label{MSzegoopR}
\widehat{\left(\Pi_{\geq 0}  U\right)}(\xi_1)= \hat{U}(\xi_1)\in \mathbb{C}^{M \times N}, \quad \widehat{\left(\Pi_{\geq 0}  U\right)}(\xi_2)=0_{M \times N}, \quad \forall \xi_1>0> \xi_2 .
\end{equation}for any $U \in L^2(\mathbb{R}; \mathbb{C}^{M \times N})$.   
\subsection{Motivation}
\noindent The motivation to introduce equation \eqref{MSzego} is based on the following two facts. On the one hand, the cubic scalar Szeg\H{o} equation on the torus $\mathbb{T}:=\mathbb{R}\slash 2 \pi \mathbb{Z}$,
\begin{equation}\label{sSzegoT}
i\partial_t u = \Pi^{\mathbb{T}}_{\geq 0}(|u|^2 u), \quad u=u(t,x) \in \mathbb{C}, \quad (t,x) \in \mathbb{R} \times \mathbb{T}, \quad \Pi_{\geq 0}^{\mathbb{T}}: \sum_{n\in \mathbb{Z}}a_n e^{inx} \mapsto \sum_{n \geq 0}a_n e^{inx}, 
\end{equation}which is a toy model of totally nondispersive Hamiltonian equation, is introduced in G\'erard--Grellier \cite{GGANNENS, GGinvent, GGAPDE,  GGTurb2015, GGTAMS, GerardGrellierBook} and  G\'erard--Pushnitski \cite{GerPushInv2022} in order to discover new tools to study the problem of global wellposedness and other qualitative properties of smooth solutions of the nonlinear Schr\"odinger-type equation which is lack of dispersion. Thanks to its two-Lax-pair structure, P. G\'erard and S. Grellier have constructed action--angle coordinates on the finite rank manifolds and the explicit formula for general solutions, leading to its complete integrability. The explicit formula in G\'erard--Grellier \cite{GGTAMS} allows to extend the flow map of \eqref{sSzegoT} to the low  regularity phase space $L^2_+(\mathbb{T}; \mathbb{C}):=\Pi_{\geq 0}^{\mathbb{T}}(L^2 (\mathbb{T}; \mathbb{C}))$ and this extension is sharp, according to the pioneering work G\'erard--Pushnitski \cite{GerPushInv2022}. The classification of traveling waves, the nonlinear Fourier transform and turbulent solutions of \eqref{sSzegoT} have been established in G\'erard--Grellier \cite{GGANNENS,    GGTurb2015,  GerardGrellierBook}. O.  Pocovnicu has introduced the scalar version of \eqref{MSzego} in \cite{poAPDE2011, poDCDS2011},
\begin{equation}\label{sSzegoR}
i\partial_t u = \Pi^{\mathbb{R}}_{\geq 0}(|u|^2 u), \quad u=u(t,x) \in \mathbb{C}, \quad (t,x) \in \mathbb{R} \times \mathbb{R}, \quad \Pi_{\geq 0}^{\mathbb{R}} = \mathbf{1}_{(0, +\infty)}(\mathrm{D}), 
\end{equation}in order to compare the space-periodic solutions with the space non-periodic solutions of the cubic scalar Szeg\H{o} equation and to study their similarities and differences. The cubic scalar Szeg\H{o} equation on $\mathbb{R}$ also enjoys a Lax pair structure, which allows to establish generalized action--angle coordinates, the soliton resolution, the classification of traveling waves and to construct the turbulent solutions, according to \cite{poAPDE2011, poDCDS2011}. The explicit formula for general solutions of \eqref{sSzegoR} is discovered by G\'erard--Pushnitski \cite{GerPush2023} and it allows to extend its flow map to the low regularity phase space $L^2_+(\mathbb{R}; \mathbb{C}):=\Pi_{\geq 0}^{\mathbb{R}}(L^2 (\mathbb{R}; \mathbb{C}))$. However, the sharpness of this extension still remains as an open problem. There are also many other significant differences on the dynamics between the space-periodic solutions  \eqref{sSzegoT} and the space non-periodic solutions \eqref{sSzegoR}. For instance, every rational solution to \eqref{sSzegoT} is quasi-periodic; nevertheless, the high regularity Sobolev norm of rational solutions \eqref{sSzegoR} may tend to infinity for some  dense subset of rational initial data. The space non-periodic explicit formula  in  G\'erard--Pushnitski \cite{GerPush2023} shows a completely different nature from the space-periodic explicit formula in G\'erard--Grellier \cite{GGTAMS}. One cannot be deduced directly from another.\\

\noindent On the other hand, the matrix extension of the space-periodic cubic Szeg\H{o} equation \eqref{sSzegoT} in Sun \cite{SUNMSzego} displays a significant difference from the matrix generalizations of other integrable PDEs  including the Korteweg--de Vries (KdV) equation in  Lax \cite{LaxPairCPAMKdV}, the cubic Schr\"odinger system (NLS) in  Zakharov--Shabat \cite{ZS1972}, the spin Benjamin--Ono equation (sBO) in Berntson--Langmann--Lenells \cite{sBOBLL2022} and G\'erard \cite{sBOLaxP2022}, the Calogero--Moser--Sutherland derivative Schr\"odinger equation (CMSdNLS) in  G\'erard--Lenzmann \cite{Gerard-Lenzmann2022} and Badreddine \cite{Badreddine2023},  and the intertwined CMSdNLS system (iCMSdNLS) of two variables in Sun \cite{SUNInterCMSDNLS}. When generalizing to matrix solutions in the case of the KdV, NLS, sBO, CMSdNLS, iCMSdNLS equations, if the scalar multiplication is replaced by the right multiplication of matrices, then the Lax pair of the original scalar equation becomes the Lax pair of the corresponding matrix equation. However, it doesn't work for the cubic Szeg\H{o} equation on the torus according to Sun \cite{SUNMSzego}. The scalar Hankel operator 
\begin{equation}\label{Huscalar}
H_u : h \in L^2_+(\mathbb{M}; \mathbb{C}) \mapsto \Pi_{\geq 0}^{\mathbb{M}}(u \overline{h})\in L^2_+(\mathbb{M}; \mathbb{C}), \quad u \in \Pi_{\geq 0}^{\mathbb{M}} ( H^{\frac{1}{2}}(\mathbb{M}; \mathbb{C})  ), \quad \mathbb{M} \in \{\mathbb{T}, \mathbb{R}\},
\end{equation}has two matrix generalizations, the left and right matrix Hankel operators, defined by
\begin{equation}\label{rlHankelIntro}
\begin{split}
& \mathbf{H}^{(\mathbf{r})}_{U} : F \in L^2_+(\mathbb{M}; \mathbb{C}^{d \times N}) \mapsto \mathbf{H}^{(\mathbf{r})}_{U}(F) = \Pi_{\geq 0}(U F^*) \in L^2_+(\mathbb{M}; \mathbb{C}^{M \times d});\\
 & \mathbf{H}^{(\mathbf{l})}_{U} : G \in L^2_+(\mathbb{M}; \mathbb{C}^{M \times d}) \mapsto \mathbf{H}^{(\mathbf{l})}_{U}(G) = \Pi_{\geq 0}(G^* U) \in L^2_+(\mathbb{M}; \mathbb{C}^{d \times N}),
\end {split}
\end{equation}for any $M,N,d \in \mathbb{N}_+$ and $U \in H^{\frac{1}{2}}_+(\mathbb{M}; \mathbb{C}^{M\times N}) = \Pi_{\geq 0}^{\mathbb{M}}( H^{\frac{1}{2}} (\mathbb{M}; \mathbb{C}^{M\times N}) )$. When $M \ne N$, due to the rules of matrix addition and multiplication, neither   $\mathbf{H}^{(\mathbf{r})}_{U}$ nor $\mathbf{H}^{(\mathbf{l})}_{U}$ can be chosen as the Lax operator of the matrix Szeg\H{o} equation \eqref{MSzego},  while the scalar Hankel operator $H_u$ in \eqref{Huscalar}  is the Lax operator for the scalar Szeg\H{o} equation \eqref{sSzegoR}. We refer to G\'erard--Grellier \cite{GGANNENS, GGinvent, GGAPDE,  GGTurb2015, GGTAMS, GerardGrellierBook}, Pocovnicu \cite{poAPDE2011, poDCDS2011} and G\'erard--Pushnitski \cite{GerPushInv2022, GerPush2023}   for details. According to Sun \cite{SUNMSzego}, the double matrix Hankel operators $\mathbf{H}^{(\mathbf{r})}_{U} \mathbf{H}^{(\mathbf{l})}_{U}$ and  $\mathbf{H}^{(\mathbf{l})}_{U} \mathbf{H}^{(\mathbf{r})}_{U}$ remain to provide the Lax pair structure for the matrix Szeg\H{o} equation \eqref{MSzego} on $\mathbb{T}$, which extends the scalar explicit formula in G\'erard--Grellier \cite{GGTAMS} to every $H^{\frac{1}{2}}_+(\mathbb{T}; \mathbb{C}^{M\times N})$-solution. 
 As a consequence, the matrix extension of the cubic Szeg\H{o} equation \eqref{sSzegoT} allows to discover its interior structure of integrable system and the general Lax pair and explicit formula which hold for every matrix solution. \\
 
\noindent Inspired from G\'erard--Pushnitski \cite{GerPush2023} and Sun \cite{SUNMSzego}, we want to extend the explicit formula of scalar space non-periodic solutions of \eqref{sSzegoR} to every matrix solution, i.e. the solution to the matrix Szeg\H{o} equation \eqref{MSzego} on the real line. Let $H^s_+(\mathbb{R}; \mathbb{C}^{M\times N}) = \Pi_{\geq 0}^{\mathbb{R}}( H^{s} (\mathbb{R}; \mathbb{C}^{M\times N}) )$ denotes the filtered Sobolev space, $\forall s \geq 0$. Before stating the main result, we show that \eqref{MSzego} is globally wellposed in every high   regularity filtered Sobolev space. 
\begin{prop}\label{GWPH0.5R}
Given $U_0 \in H^{\frac{1}{2}}_+(\mathbb{R};  \mathbb{C}^{M\times N})$, there exists a unique function $U \in C(\mathbb{R}; H^{\frac{1}{2}}_+(\mathbb{R};  \mathbb{C}^{M\times N}))$ solving the cubic matrix Szeg\H{o} equation \eqref{MSzego} such that $U(0)=U_0$. For each $T>0$, the flow map $\Phi : U_0 \in H^{\frac{1}{2}}_+(\mathbb{R};  \mathbb{C}^{M\times N}) \mapsto U\in C([-T, T]; H^{\frac{1}{2}}_+(\mathbb{R};  \mathbb{C}^{M\times N}))$ is continuous. Moreover, if $U_0 \in H^s_+(\mathbb{R};  \mathbb{C}^{M\times N})$ for some $s > \frac{1}{2}$, then $U \in C^{\infty}(\mathbb{R}; H^s_+(\mathbb{R};  \mathbb{C}^{M\times N}))$.
\end{prop}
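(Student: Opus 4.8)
\emph{Proof idea.}\ The strategy follows the standard scheme for totally nondispersive integrable models (Pocovnicu \cite{poAPDE2011,poDCDS2011}, G\'erard--Pushnitski \cite{GerPush2023}, Sun \cite{SUNMSzego}): solve the Cauchy problem locally in the subcritical filtered Sobolev spaces $H^s_+$, $s>\tfrac12$, by a fixed-point argument; globalise via the conservation laws of \eqref{MSzego}; and then reach the endpoint $s=\tfrac12$ by approximation and compactness, uniqueness there being supplied by the Lax pair. For $s>\tfrac12$ the space $H^s(\mathbb{R})$ is a Banach algebra and $\Pi_{\geq0}$ is a contraction on it (its Fourier symbol $\mathbf{1}_{[0,+\infty)}$ is bounded by $1$), so $U\mapsto\Pi_{\geq0}(UU^*U)$ is a bounded homogeneous cubic, hence locally Lipschitz, map on $H^s_+(\mathbb{R};\mathbb{C}^{M\times N})$. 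By the Cauchy--Lipschitz theorem in this Banach space, every $U_0\in H^s_+$ determines a unique maximal solution $U\in C\bigl((-T_*,T^*);H^s_+\bigr)$ depending smoothly on the datum, with $T^*\gtrsim_s\|U_0\|_{H^s}^{-2}$ and the blow-up alternative $T^*<+\infty\Rightarrow\limsup_{t\uparrow T^*}\|U(t)\|_{H^s}=+\infty$; differentiating the equation repeatedly in $t$ gives $U\in C^\infty\bigl((-T_*,T^*);H^s_+\bigr)$.

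To globalise, a direct computation --- using cyclicity of the trace and the fact that $\operatorname{tr}(AB)\in\mathbb{R}$ when $A,B$ are Hermitian, so that the potentially dangerous terms drop out of the relevant imaginary parts --- shows that the mass $\mathcal{Q}(U)=\int_\mathbb{R}\operatorname{tr}(UU^*)$ and the momentum $\mathcal{M}(U)=\langle\mathrm{D}U,U\rangle_{L^2}=\||\mathrm{D}|^{1/2}U\|_{L^2}^2$ are conserved along the flow, whence $\|U(t)\|_{H^{1/2}}^2\asymp\mathcal{Q}(U_0)+\mathcal{M}(U_0)$ stays bounded on the interval of existence. Combining a fractional Leibniz (Kato--Ponce) estimate with the logarithmic Brezis--Gallouet inequality $\|V\|_{L^\infty}\lesssim\|V\|_{H^{1/2}}\bigl(1+\log(2+\|V\|_{H^s}/\|V\|_{H^{1/2}})\bigr)^{1/2}$ yields a tame bound of the shape $\tfrac{d}{dt}\|U(t)\|_{H^s}^2\lesssim_s\|U(t)\|_{H^{1/2}}^2\,\log\bigl(2+\|U(t)\|_{H^s}\bigr)\,\|U(t)\|_{H^s}^2$, so Gronwall's lemma rules out finite-time blow-up and $T_*=T^*=+\infty$. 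This proves the proposition for every $s>\tfrac12$; the propagation of regularity asserted in the last sentence then follows from this global $H^s$ theory once uniqueness in $H^{1/2}_+$ is established.

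For $U_0\in H^{1/2}_+$ I would pass to the limit from the frequency truncations $U_0^{(n)}=\mathbf{1}_{[0,n]}(\mathrm{D})U_0\in H^1_+$, which converge to $U_0$ in $H^{1/2}$. The corresponding global solutions $U^{(n)}$ are bounded in $C([-T,T];H^{1/2}_+)$ by the conservation laws, and the equation together with the product estimate $\|UU^*U\|_{H^{-1/2}}\lesssim\|U\|_{H^{1/2}}^3$ and its polarised form (a consequence of $H^{1/2}(\mathbb{R})\hookrightarrow L^4$ and duality) bounds $\partial_tU^{(n)}$ in $C([-T,T];H^{-1/2})$. A space-localised Aubin--Lions--Simon argument (with a diagonal extraction over dyadic spatial windows) yields a subsequence along which $U^{(n)}\to U$ in $C([-T,T];H^{s'}_{\mathrm{loc}})$ for every $s'<\tfrac12$ and weak-$\ast$ in $L^\infty([-T,T];H^{1/2})$; since $H^{s'}_{\mathrm{loc}}\hookrightarrow L^3_{\mathrm{loc}}$, one may pass to the limit in $U^{(n)}(U^{(n)})^*U^{(n)}$ in the sense of distributions, so $U$ solves \eqref{MSzego}, and the conservation of $\mathcal{Q}+\mathcal{M}$ for the limit together with weak continuity in $H^{1/2}$ upgrades $U$ to an element of $C([-T,T];H^{1/2}_+)$ (norm plus weak convergence in a Hilbert space implies strong convergence). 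Uniqueness at $s=\tfrac12$, and hence continuity of $\Phi$ on $H^{1/2}_+$, I would draw from the integrable structure established below: for $U(t)\in H^{1/2}_+$ the Lax operators $\mathbf{H}^{(\mathbf{r})}_{U(t)}\mathbf{H}^{(\mathbf{l})}_{U(t)}$ and $\mathbf{H}^{(\mathbf{l})}_{U(t)}\mathbf{H}^{(\mathbf{r})}_{U(t)}$ are Hilbert--Schmidt, the assignment $U\mapsto\bigl(\mathbf{H}^{(\mathbf{r})}_{U}\mathbf{H}^{(\mathbf{l})}_{U},\,\mathbf{H}^{(\mathbf{l})}_{U}\mathbf{H}^{(\mathbf{r})}_{U}\bigr)$ is injective, and the Lax pair shows this pair obeys an autonomous equation $\partial_t\mathcal{K}=\mathcal{F}(\mathcal{K})$ with $\mathcal{F}$ locally Lipschitz on the relevant operator ideal; Cauchy--Lipschitz in that ideal then forces two $H^{1/2}_+$ solutions sharing an initial datum to have equal Lax operators for all $t$, hence to coincide, after which continuous dependence follows by running the compactness argument on an arbitrary $H^{1/2}$-convergent sequence of data.

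The genuinely delicate point is this endpoint $s=\tfrac12$: the cubic nonlinearity is critical relative to $H^{1/2}(\mathbb{R})$, which is neither a Banach algebra nor embedded in $L^\infty$, so the naive $L^2$ energy estimate for a difference $W=U-V$ of two solutions only closes into $\tfrac{d}{dt}\|W\|_{L^2}^2\lesssim\|W\|_{L^2}$, which fails the Osgood criterion and therefore does not give uniqueness --- precisely why the Lax pair, rather than a soft PDE estimate, is needed there. All the remaining steps parallel the scalar real-line theory of Pocovnicu and G\'erard--Pushnitski and the matrix periodic theory of Sun.
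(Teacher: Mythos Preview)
The paper does not actually prove Proposition~\ref{GWPH0.5R}; it is stated as background and implicitly deferred to the scalar real-line theory \cite{poAPDE2011,poDCDS2011,GerPush2023} and the matrix torus case \cite{SUNMSzego}. There is thus no in-paper argument to compare against, so I assess your sketch on its own. Your treatment of $s>\tfrac12$ --- Banach-algebra fixed point, conservation of mass and momentum, Brezis--Gallouet tame estimate plus Gronwall, and $C^\infty$ regularity in $t$ --- is correct and is exactly the standard argument in those references. The compactness existence step at $s=\tfrac12$ is also reasonable.

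The genuine gap is your uniqueness argument at $s=\tfrac12$, which rests on two false claims. First, the assignment $U\mapsto\bigl(\mathbf{H}^{(\mathbf{r})}_U\mathbf{H}^{(\mathbf{l})}_U,\,\mathbf{H}^{(\mathbf{l})}_U\mathbf{H}^{(\mathbf{r})}_U\bigr)$ is \emph{not} injective: replacing $U$ by $e^{i\theta}U$ leaves both double Hankel operators unchanged, since each is a composition of two antilinear factors and the phases $e^{i\theta}$, $e^{-i\theta}$ cancel. So equal Lax operators cannot force equal solutions. Second, the Heisenberg--Lax equation \eqref{TwoHeiLaxMSzegoR} is \emph{not} autonomous in $\mathbf{L}_U=\mathbf{H}^{(\mathbf{l})}_U\mathbf{H}^{(\mathbf{r})}_U$: by \eqref{rel2HT-TTfor} one has $\mathbf{T}^{(\mathbf{l})}_{U^*U}=\mathbf{L}_U+\mathbf{T}^{(\mathbf{l})}_U\mathbf{T}^{(\mathbf{l})}_{U^*}$, and the second summand is not a function of $\mathbf{L}_U$, so there is no $\mathcal{F}$ with $\partial_t\mathbf{L}_U=\mathcal{F}(\mathbf{L}_U)$ and Cauchy--Lipschitz in an operator ideal does not apply. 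Your diagnosis that the naive $L^2$ difference estimate fails Osgood is correct, but this repair does not close. The arguments in the cited references instead either (a) run a Gronwall on the Hilbert--Schmidt norm of the \emph{single} Hankel operators $\mathbf{H}^{(\mathbf{r})}_U$, $\mathbf{H}^{(\mathbf{l})}_U$ --- which are $\mathbb{C}$-linear in $U$, genuinely injective, and whose evolution is furnished by Lemma~\ref{HUVWKUVWLem} --- or (b) show directly, via a Brezis--Gallouet-controlled $H^{1/2}$ difference estimate on smooth approximants, that the sequence $U^{(n)}$ is Cauchy in $C([-T,T];H^{1/2}_+)$; the latter also makes the Aubin--Lions step unnecessary and delivers continuity of the flow map in one stroke.
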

\noindent Given $d, M,N\in\mathbb{N}_+$, the Toeplitz operators of symbol $V \in L^{\infty}(\mathbb{R}; \mathbb{C}^{M \times N})$ are given by
\begin{equation}\label{rlToepIntro}
\small
 \mathbf{T}^{(\mathbf{r})}_{V}(G) = \Pi_{\geq 0}(V G) \in L^2_+(\mathbb{R}; \mathbb{C}^{M \times d}), \quad
 \mathbf{T}^{(\mathbf{l})}_{V}(F) = \Pi_{\geq 0}(FV) \in L^2_+(\mathbb{R}; \mathbb{C}^{d \times N}), 
\end{equation}for any $\forall (F,G) \in L^2_+(\mathbb{R}; \mathbb{C}^{d \times M}) \times L^2_+(\mathbb{R}; \mathbb{C}^{N \times d})$. The Lax pair structure of the matrix Szeg\H{o} equation \eqref{MSzego} is recalled in the next proposition.   
\begin{prop}[Sun \cite{SUNMSzego}]\label{LaxPairThmR}
If $U \in C^{\infty}(\mathbb{R}; H^s_+(\mathbb{R};  \mathbb{C}^{M\times N}))$ solves the matrix Szeg\H{o} equation \eqref{MSzego} for some $s>\tfrac{1}{2}$, for some positive integers $M,N \in \mathbb{N}_+$, then for every $d \in \mathbb{N}_+$, the time-dependent operators $\mathbf{H}^{(\mathbf{r})}_{U}\mathbf{H}^{(\mathbf{l})}_{U} \in C^{\infty}(\mathbb{R}; \mathcal{B}(L^2_+(\mathbb{R};  \mathbb{C}^{M\times d})))$ and  $\mathbf{H}^{(\mathbf{l})}_{U}\mathbf{H}^{(\mathbf{r})}_{U} \in C^{\infty}(\mathbb{R}; \mathcal{B}(L^2_+(\mathbb{R};  \mathbb{C}^{d\times N})))$ satisfy the following Heisenberg--Lax equations :
\begin{equation}\label{TwoHeiLaxMSzegoR}
\small
\begin{split}
& \tfrac{\mathrm{d}}{\mathrm{d}t}(\mathbf{H}^{(\mathbf{r})}_{U(t)}\mathbf{H}^{(\mathbf{l})}_{U(t)}) = i[\mathbf{H}^{(\mathbf{r})}_{U(t)}\mathbf{H}^{(\mathbf{l})}_{U(t)}, \; \mathbf{T}^{(\mathbf{r})}_{U(t) U(t)^*}]; \quad \tfrac{\mathrm{d}}{\mathrm{d}t}(\mathbf{H}^{(\mathbf{l})}_{U(t)}\mathbf{H}^{(\mathbf{r})}_{U(t)}) = i[\mathbf{H}^{(\mathbf{l})}_{U(t)}\mathbf{H}^{(\mathbf{r})}_{U(t)},  \; \mathbf{T}^{(\mathbf{l})}_{U(t)^* U(t)}].
\end{split}
\end{equation}
\end{prop}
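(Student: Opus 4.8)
\emph{Proof proposal.} The two Heisenberg--Lax identities in \eqref{TwoHeiLaxMSzegoR} are exchanged by the antilinear adjoint operation on the Hankel operators, so I will focus on the first; the scheme is the matrix, real-line counterpart of the periodic computation of Sun \cite{SUNMSzego}. First I would dispose of the soft part. By hypothesis $U\in C^{\infty}(\mathbb{R};H^s_+)$ with $s>\tfrac12$, so $U(t)U(t)^*$ and $U(t)^*U(t)$ lie in $L^{\infty}$ and depend smoothly on $t$; since $U\mapsto\mathbf{H}^{(\mathbf{r})}_{U},\mathbf{H}^{(\mathbf{l})}_{U}$ are $\mathbb{C}$-linear and bounded from $H^{\frac12}_+$ into the relevant operator spaces (only the argument enters antilinearly) and $V\mapsto\mathbf{T}^{(\mathbf{r})}_V,\mathbf{T}^{(\mathbf{l})}_V$ are bounded from $L^{\infty}$, all the operators in \eqref{TwoHeiLaxMSzegoR} are smooth curves of bounded operators; this proves the $C^{\infty}$ assertion of the statement and legitimises every differentiation below. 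I will also use the elementary facts $\big(\mathbf{H}^{(\mathbf{r})}_{U}\big)^{\dagger}=\mathbf{H}^{(\mathbf{l})}_{U}$ for the antilinear adjoint with respect to the trace inner product (a direct computation), whence $\mathbf{H}^{(\mathbf{r})}_{U}\mathbf{H}^{(\mathbf{l})}_{U}=\mathbf{H}^{(\mathbf{r})}_{U}\big(\mathbf{H}^{(\mathbf{r})}_{U}\big)^{\dagger}$ is self-adjoint (and likewise $\mathbf{H}^{(\mathbf{l})}_{U}\mathbf{H}^{(\mathbf{r})}_{U}$), and $\big(\mathbf{T}^{(\mathbf{r})}_V\big)^*=\mathbf{T}^{(\mathbf{r})}_{V^*}$, $\big(\mathbf{T}^{(\mathbf{l})}_V\big)^*=\mathbf{T}^{(\mathbf{l})}_{V^*}$, so that $\mathbf{T}^{(\mathbf{r})}_{UU^*}$ and $\mathbf{T}^{(\mathbf{l})}_{U^*U}$ are self-adjoint because $UU^*$ and $U^*U$ are Hermitian.

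The computational core is a \emph{single rectangular Lax equation} for $\mathbf{H}^{(\mathbf{r})}_{U}$ alone. Put $B^{(\mathbf{r})}_{U}:=\tfrac{i}{2}\mathbf{H}^{(\mathbf{r})}_{U}\mathbf{H}^{(\mathbf{l})}_{U}-i\,\mathbf{T}^{(\mathbf{r})}_{UU^*}$ and $B^{(\mathbf{l})}_{U}:=\tfrac{i}{2}\mathbf{H}^{(\mathbf{l})}_{U}\mathbf{H}^{(\mathbf{r})}_{U}-i\,\mathbf{T}^{(\mathbf{l})}_{U^*U}$ -- the natural matrix analogues of $\tfrac{i}{2}H_u^2-iT_{|u|^2}$, both anti-self-adjoint by the previous paragraph. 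The claim is that, along a solution,
\begin{equation*}
\tfrac{\mathrm d}{\mathrm dt}\,\mathbf{H}^{(\mathbf{r})}_{U}=B^{(\mathbf{r})}_{U}\,\mathbf{H}^{(\mathbf{r})}_{U}-\mathbf{H}^{(\mathbf{r})}_{U}\,B^{(\mathbf{l})}_{U}.
\end{equation*}
Since $\mathbf{H}^{(\mathbf{r})}$ is linear in its symbol, the left side is $-i\,\mathbf{H}^{(\mathbf{r})}_{\Pi_{\geq 0}(UU^*U)}$ by \eqref{MSzego}, and for $F\in L^2_+$ one has $\mathbf{H}^{(\mathbf{r})}_{\Pi_{\geq 0}(UU^*U)}(F)=\Pi_{\geq 0}(UU^*U\,F^*)$: the inner projector drops because $\Pi_{<0}(UU^*U)\,F^*$ has spectrum in $(-\infty,0)$ and is annihilated by $\Pi_{\geq 0}$. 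Expanding the right side with the definitions, the antilinearity of $\mathbf{H}^{(\mathbf{r})}_{U}$, and $\Pi_{\geq 0}(W)^*=\Pi_{\leq 0}(W^*)$, both sides applied to $F$ unfold into sums of nested $\Pi_{\geq 0}/\Pi_{\leq 0}$-expressions built from $U,U^*,F^*$, and matching them term by term is a finite bookkeeping using only: a product of functions with spectra in $[0,\infty)$ stays holomorphic, so $\Pi_{\geq 0}$ is the identity on it; a product of functions with spectra in $(-\infty,0]$ is killed by $\Pi_{\geq 0}$; and the splittings $UU^*=\Pi_{\geq 0}(UU^*)+\Pi_{<0}(UU^*)$ and $U^*U=\Pi_{\geq 0}(U^*U)+\Pi_{<0}(U^*U)$ of the non-holomorphic Toeplitz symbols into their holomorphic and antiholomorphic parts. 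On the line one moreover has $\Pi_{>0}=\Pi_{\geq 0}$ on $L^2(\mathbb{R})$, which, compared with the torus, kills the zero-frequency correction terms; apart from that the cancellations are those of Pocovnicu \cite{poAPDE2011} in the scalar case and of Sun \cite{SUNMSzego} in the matrix case. Taking antilinear adjoints and using $(B^{(\mathbf{r})}_{U})^*=-B^{(\mathbf{r})}_{U}$, $(B^{(\mathbf{l})}_{U})^*=-B^{(\mathbf{l})}_{U}$ then gives the companion equation $\tfrac{\mathrm d}{\mathrm dt}\mathbf{H}^{(\mathbf{l})}_{U}=B^{(\mathbf{l})}_{U}\mathbf{H}^{(\mathbf{l})}_{U}-\mathbf{H}^{(\mathbf{l})}_{U}B^{(\mathbf{r})}_{U}$.

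The two stated identities then follow algebraically. By the Leibniz rule and the two equations just obtained,
\begin{equation*}
\tfrac{\mathrm d}{\mathrm dt}\big(\mathbf{H}^{(\mathbf{r})}_{U}\mathbf{H}^{(\mathbf{l})}_{U}\big)=B^{(\mathbf{r})}_{U}\mathbf{H}^{(\mathbf{r})}_{U}\mathbf{H}^{(\mathbf{l})}_{U}-\mathbf{H}^{(\mathbf{r})}_{U}B^{(\mathbf{l})}_{U}\mathbf{H}^{(\mathbf{l})}_{U}+\mathbf{H}^{(\mathbf{r})}_{U}B^{(\mathbf{l})}_{U}\mathbf{H}^{(\mathbf{l})}_{U}-\mathbf{H}^{(\mathbf{r})}_{U}\mathbf{H}^{(\mathbf{l})}_{U}B^{(\mathbf{r})}_{U}=\big[B^{(\mathbf{r})}_{U},\,\mathbf{H}^{(\mathbf{r})}_{U}\mathbf{H}^{(\mathbf{l})}_{U}\big],
\end{equation*}
the two $\mathbf{H}^{(\mathbf{r})}_{U}B^{(\mathbf{l})}_{U}\mathbf{H}^{(\mathbf{l})}_{U}$ terms cancelling; since $\tfrac{i}{2}\mathbf{H}^{(\mathbf{r})}_{U}\mathbf{H}^{(\mathbf{l})}_{U}$ commutes with $\mathbf{H}^{(\mathbf{r})}_{U}\mathbf{H}^{(\mathbf{l})}_{U}$, the right side equals $\big[-i\,\mathbf{T}^{(\mathbf{r})}_{UU^*},\,\mathbf{H}^{(\mathbf{r})}_{U}\mathbf{H}^{(\mathbf{l})}_{U}\big]=i\big[\mathbf{H}^{(\mathbf{r})}_{U}\mathbf{H}^{(\mathbf{l})}_{U},\,\mathbf{T}^{(\mathbf{r})}_{UU^*}\big]$, which is the first identity in \eqref{TwoHeiLaxMSzegoR}. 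Exchanging $\mathbf{H}^{(\mathbf{r})}_{U}\mathbf{H}^{(\mathbf{l})}_{U}$ with $\mathbf{H}^{(\mathbf{l})}_{U}\mathbf{H}^{(\mathbf{r})}_{U}$ and $B^{(\mathbf{r})}_{U}$ with $B^{(\mathbf{l})}_{U}$ throughout yields the second.

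\textbf{The hard part will be the rectangular Lax equation.} Conceptually it is a routine Leibniz computation for Hankel and Toeplitz operators, but because $M\neq N$ in general the rectangular matrix shapes force each Hankel or Toeplitz factor onto a prescribed side, so one must be scrupulous about which of $\mathbf{H}^{(\mathbf{r})}/\mathbf{H}^{(\mathbf{l})}$ and $\mathbf{T}^{(\mathbf{r})}/\mathbf{T}^{(\mathbf{l})}$ occurs and in which order; moreover the Lax operator $\mathbf{T}^{(\mathbf{r})}_{UU^*}$ has a genuinely non-holomorphic symbol, so its interaction with the Hankel operators is controlled only after the holomorphic/antiholomorphic decomposition of $UU^*$ (respectively $U^*U$) -- precisely the delicate point already present in the scalar passage from $H_u$ to $T_{|u|^2}$. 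Everything else -- the boundedness and $t$-smoothness of the setup and the purely algebraic deduction of the two identities -- is soft.
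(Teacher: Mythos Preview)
Your proposal is correct and follows essentially the same route as the paper. The ``rectangular Lax equation'' $\tfrac{\mathrm d}{\mathrm dt}\mathbf{H}^{(\mathbf{r})}_{U}=B^{(\mathbf{r})}_{U}\mathbf{H}^{(\mathbf{r})}_{U}-\mathbf{H}^{(\mathbf{r})}_{U}B^{(\mathbf{l})}_{U}$, once the antilinearity of $\mathbf{H}^{(\mathbf{r})}_{U}$ is used to flip the $i$'s, unwinds to exactly the paper's key identity (Lemma~\ref{HUVWKUVWLem} with $V=W=U$): $\mathbf{H}^{(\mathbf{r})}_{\Pi_{\geq 0}(UU^*U)}=\mathbf{T}^{(\mathbf{r})}_{UU^*}\mathbf{H}^{(\mathbf{r})}_U+\mathbf{H}^{(\mathbf{r})}_U\mathbf{T}^{(\mathbf{l})}_{U^*U}-\mathbf{H}^{(\mathbf{r})}_U\mathbf{H}^{(\mathbf{l})}_U\mathbf{H}^{(\mathbf{r})}_U$. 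The paper then plugs this (and its $(\mathbf{l})$-companion) directly into $\mathbf{H}^{(\mathbf{r})}_U\mathbf{H}^{(\mathbf{l})}_{\Pi_{\geq 0}(UU^*U)}-\mathbf{H}^{(\mathbf{r})}_{\Pi_{\geq 0}(UU^*U)}\mathbf{H}^{(\mathbf{l})}_U$ to obtain the commutator, while you pass through the $B$-operators and Leibniz; both routes are the same computation organised differently. One cosmetic point: your opening sentence that the two Heisenberg--Lax identities are ``exchanged by the antilinear adjoint'' is not literally true (they live on different spaces), but since your actual argument derives the second identity by the symmetric $B^{(\mathbf{l})}/B^{(\mathbf{r})}$ swap this does no harm; the paper achieves the same symmetry via the transpose conjugation $\mathfrak{T}$.
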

\begin{rem}
Since the right Hankel operator  $\mathbf{H}^{(\mathbf{r})}_{U}$ coincides with the left Hankel operator $\mathbf{H}^{(\mathbf{l})}_{U}$ in the scalar case, when  $M=N=1$,   the single Hankel operator $H_u$ becomes a Lax operator of the cubic scalar Szeg\H{o} equation \eqref{sSzegoR} on the real line.
\end{rem}
\begin{rem}
In Sun \cite{SUNMSzego}, both the double Hankel operators and the double shift-Hankel operators are Lax operators of the matrix Szeg\H{o} equation on the torus. However, the double shift-Hankel operators are not Lax operators for the matrix Szeg\H{o} equation  \eqref{MSzego} on the real line. The construction of the explicit expression of its solutions relies only on the Lax operators in proposition $\ref{LaxPairThmR}$.
\end{rem}
 
\subsection{The main result}
\noindent We define $L^2_+(\mathbb{R}; \mathbb{C}^{M \times N})=\Pi_{\geq 0}(L^2 (\mathbb{R}; \mathbb{C}^{M \times N})) = H^0_+(\mathbb{R}; \mathbb{C}^{M \times N})$. The Poisson integral of any function $U \in L^2_+(\mathbb{R}; \mathbb{C}^{M \times N})$ is a holomorphic function on the upper half plane $\mathbb{C}_+:=\{z \in \mathbb{C}: \mathrm{Im}z >0\}$, given by  
\begin{equation}\label{PoiIntforR}
\underline{U}(z) = \mathscr{P}[U](z) : =  \int_{\mathbb{R}} \mathfrak{P}_{y}(x-t)U(t) \mathrm{d}t =\tfrac{1}{2\pi} \int_{0}^{+\infty} e^{iz \xi}\hat{U}(\xi)\mathrm{d}\xi, \quad \forall z=x+yi \in \mathbb{C}_+,
\end{equation}where $\mathfrak{P}_{y}(x)=\frac{y}{\pi(x^2 +y^2)}$, denotes the Poisson kernel on $\mathbb{C}_+$. The original function $U \in L^2_+(\mathbb{R}; \mathbb{C}^{M \times N})$ can be considered as the $L^2$-limit of its Poisson integral when $\mathrm{Im}z \to 0^+$, i.e.
\begin{equation}\label{CVPoiIntforR}
\lim_{y \to 0^+}\|\tau_{-yi}\underline{U}|_{\mathbb{R}} - U \|_{L^2_+(\mathbb{R}; \mathbb{C}^{M \times N})} =0.
\end{equation}where $\tau_{-yi}\underline{U}|_{\mathbb{R}}: x \in \mathbb{R} \mapsto \underline{U}(x +yi) \in \mathbb{C}^{M \times N} \in L^2_+(\mathbb{R}; \mathbb{C}^{M \times N})$, $\forall y >0$. Moreover, we have 
\begin{equation} 
\sup_{y>0}\|\tau_{-yi}\underline{U}|_{\mathbb{R}}\|_{L^2_+(\mathbb{R}; \mathbb{C}^{M \times N})} \leq \|U\|_{L^2_+(\mathbb{R}; \mathbb{C}^{M \times N})}.
\end{equation}Then $L^2_+(\mathbb{R}; \mathbb{C}^{M \times N})$ is identified as the following 
Hardy space via the $\mathbb{C}$-Hilbert isomorphism $U \mapsto \underline{U}$,
\begin{equation}\label{IdenHardyR}
\mathbb{H}^2(\mathbb{C}_+; \mathbb{C}^{M \times N}):= \{U \in \mathrm{Hol}(\mathbb{C}_+; \mathbb{C}^{M \times N}): \sup_{y>0}\int_{\mathbb{R}} \mathrm{tr}\left(\underline{U}(x+yi) (\underline{U}(x+yi))^*\right)\mathrm{d}x<+\infty\}. 
\end{equation}Since every $L^2_+(\mathbb{R}; \mathbb{C}^{M \times N})$-function is identified as its Poisson integral via the holomorphic Fourier transform \eqref{PoiIntforR}, the goal of this paper is to express the Poisson integral of every solution of the matrix Szeg\H{o} equation \eqref{MSzego} in terms of its initial datum and the time variable. The Lax--Beurling shift semigroup $(\mathtt{S}(\eta))_{\eta \geq 0}$ of isometries on $L^2_+(\mathbb{R}; \mathbb{C}^{M \times N})$ and its adjoint semigroup $(\mathtt{S}(\eta)^*)_{\eta \geq 0}$ are defined as 
\begin{equation}\label{LaxBeurlingSG}
\mathtt{S}(\eta)U = \mathbf{e}_{\eta}U, \quad \mathtt{S}(\eta)^*U = \Pi_{\geq 0} \left(\mathbf{e}_{ \eta}^{-1}U \right), \quad \mathbf{e}_{\eta}(x) = e^{i\eta x}, \quad \forall x \in \mathbb{R},\quad \forall \eta \geq 0,
\end{equation}for every $U \in L^2_+(\mathbb{R}; \mathbb{C}^{M \times N})$. Let $-i\mathbf{G}$ denote the infinitesimal generator of the contraction semigroup $(\mathtt{S}(\eta)^*)_{\eta \geq 0}$, i.e. $ \mathbf{G} (F):=i\frac{\mathrm{d}}{\mathrm{d}\eta}\big|_{\eta=0^+}\mathtt{S}(\eta)^*(F) \in L^2_+(\mathbb{R}; \mathbb{C}^{M \times N})$, $\forall F \in \mathrm{Dom}(\mathbf{G})^{M\times N}$, where
\begin{equation}\label{DomG}
\mathrm{Dom}(\mathbf{G})^{M\times N} := \{F \in L^2_+(\mathbb{R}; \mathbb{C}^{M \times N}) : \hat{F}|_{\mathbb{R}_+^*} \in H^1(\mathbb{R}_+^*; \mathbb{C}^{M \times N})\},
\end{equation}with $\mathbb{R}_+^* = (0, +\infty)$. For any $F \in L^2_+(\mathbb{R}; \mathbb{C}^{M \times N})$ such that $\hat{F} : \mathbb{R} \to \mathbb{C}^{M \times N}$ is right continuous at $0$, set
\begin{equation}\label{IntOpforR}
\mathscr{I} (F):= \hat{F}(0^+) \in \mathbb{C}^{M \times N}.
\end{equation}For any $F \in \mathrm{Dom}(\mathbf{G})^{M\times N}$, we have the following expression for $\mathbf{G}(F) \in L^2_+(\mathbb{R}; \mathbb{C}^{M \times N})$,
\begin{equation}\label{defGFIntro}
\mathbf{G}(F)(x)= xF(x) - \tfrac{i }{2\pi}\hat{F}(0^+), \quad \forall x \in \mathbb{R}. 
\end{equation}For any $U \in H^{\frac{1}{2}}_+(\mathbb{R}; \mathbb{C}^{M \times N})$, the following projection operators $\mathfrak{m}_U^{(\mathbf{rl})}:  L^2(\mathbb{R}; \mathbb{C}^{M \times d}) \to  H^{\frac{1}{2}}_+(\mathbb{R}; \mathbb{C}^{M \times d})$ and $\mathfrak{m}_U^{(\mathbf{lr})}:  L^2(\mathbb{R}; \mathbb{C}^{d \times N}) \to  H^{\frac{1}{2}}_+(\mathbb{R}; \mathbb{C}^{d \times N})$ are of finite rank, 
\begin{equation}\label{proMlrrlIntro}
\begin{split}
&\mathfrak{m}_U^{(\mathbf{rl})} : G \in L^2(\mathbb{R}; \mathbb{C}^{M \times d}) \mapsto \mathfrak{m}_U^{(\mathbf{rl})}(G) := U \widehat{U^*G}(0) \in  H^{\frac{1}{2}}_+(\mathbb{R}; \mathbb{C}^{M \times d});\\
&\mathfrak{m}_U^{(\mathbf{lr})}: F \in L^2(\mathbb{R}; \mathbb{C}^{d \times N}) \mapsto \mathfrak{m}_U^{(\mathbf{lr})}(F) := \widehat{F U^*}(0)U \in  H^{\frac{1}{2}}_+(\mathbb{R}; \mathbb{C}^{d \times N}). 
\end{split}
\end{equation}For any $t \in \mathbb{R}$, we define that
\begin{equation}\label{LlrLrlIntro}
\begin{split}
&\mathscr{L}_U^{(\mathbf{rl})}(t): = \frac{1}{2 \pi}\int_0^t e^{-i \tau \mathbf{H}^{(\mathbf{r})}_{U} \mathbf{H}^{(\mathbf{l})}_{U}} \mathfrak{m}_U^{(\mathbf{rl})} e^{i \tau \mathbf{H}^{(\mathbf{r})}_{U} \mathbf{H}^{(\mathbf{l})}_{U}}\mathrm{d}\tau  \in \mathcal{B}_{\mathbb{C}}(L^2_+(\mathbb{R}; \mathbb{C}^{M \times d}));\\
&\mathscr{L}_U^{(\mathbf{lr})}(t): = \frac{1}{2 \pi}\int_0^t e^{-i \tau \mathbf{H}^{(\mathbf{l})}_{U} \mathbf{H}^{(\mathbf{r})}_{U}} \mathfrak{m}_U^{(\mathbf{lr})} e^{i \tau \mathbf{H}^{(\mathbf{l})}_{U} \mathbf{H}^{(\mathbf{r})}_{U}}\mathrm{d}\tau  \in \mathcal{B}_{\mathbb{C}}(L^2_+(\mathbb{R}; \mathbb{C}^{d \times N})).
\end{split}
\end{equation}Then $\mathscr{L}_U^{(\mathbf{rl})}(t)$ and $\mathscr{L}_U^{(\mathbf{lr})}(t)$ are both positive operators. The main result of this paper is stated as follows.
\begin{thm}\label{MSzegoRExp}
Given $s> \frac{1}{2}$ and $M,N \in \mathbb{N}_+$, if $U \in C^{\infty }\left(\mathbb{R};  H^s_+(\mathbb{R}; \mathbb{C}^{M\times N}) \right)$ solves the matrix Szeg\H{o} equation \eqref{MSzego} with $U(0) = U_0  \in  H^s_+(\mathbb{R};  \mathbb{C}^{M\times N})$,  then the Poisson integral  of $U(t)$ is given by
\begin{equation}\label{expforMSzegoR}
\begin{split}
\underline{U}(t, z)= &\tfrac{1}{2\pi i} \mathscr{I} \left(\left( \mathbf{G} +  \mathscr{L}^{(\mathbf{rl})}_{U_0} (t)- z\right)^{-1} e^{-i t \mathbf{H}^{(\mathbf{r})}_{U_0} \mathbf{H}^{(\mathbf{l})}_{U_0}}  (U_0)\right)\\
  = &  \tfrac{1}{2\pi i} \mathscr{I}  \left(\left( \mathbf{G} + \mathscr{L}^{(\mathbf{lr})}_{U_0} (t)- z\right)^{-1} e^{-i t \mathbf{H}^{(\mathbf{l})}_{U_0} \mathbf{H}^{(\mathbf{r})}_{U_0}}  (U_0)\right) \in \mathbb{C}^{M \times N},  
\end{split}
\end{equation}for any $(t, z) \in \mathbb{R} \times \mathbb{C}_+$, where $\mathscr{I}$ is given by  \eqref{IntOpforR}.
\end{thm}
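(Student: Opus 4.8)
I would adapt the conjugation method of G\'erard--Grellier and G\'erard--Pushnitski, in the matrix form of \cite{SUNMSzego}, reducing \eqref{expforMSzegoR} to its value at $t=0$. Two preliminary facts are used throughout. First, from the definition of $\mathbf{G}$ one has $(\mathbf{G}-z)^{-1}=i\int_0^{+\infty}e^{iz\eta}\,\mathtt{S}(\eta)^*\,\mathrm{d}\eta$ for $z\in\mathbb{C}_+$; together with $\widehat{\mathtt{S}(\eta)^*V}(\xi)=\hat V(\xi+\eta)$ for $\xi\geq 0$ and \eqref{PoiIntforR} this yields
\begin{equation*}
\underline{V}(z)=\tfrac{1}{2\pi i}\,\mathscr{I}\big((\mathbf{G}-z)^{-1}V\big),\qquad z\in\mathbb{C}_+,
\end{equation*}
for every $V\in L^2_+(\mathbb{R};\mathbb{C}^{M\times N})$ whose Fourier transform is continuous at $0$; this is \eqref{expforMSzegoR} at $t=0$, since $\mathscr{L}^{(\mathbf{rl})}_{U_0}(0)=\mathscr{L}^{(\mathbf{lr})}_{U_0}(0)=0$. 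Second, for $F,G\in L^2_+(\mathbb{R};\cdot)$ the convolution $\widehat{FG}=\hat F\ast\hat G$ is supported in $[0,+\infty)$ and satisfies $|\widehat{FG}(\xi)|\leq\|\hat F\|_{L^2(0,\xi)}\|\hat G\|_{L^2(0,\xi)}\to 0$ as $\xi\to 0^+$, so $\mathscr{I}$ annihilates any pointwise product of two Hardy-class functions. I treat only the first formula in \eqref{expforMSzegoR}; the second is obtained by the symmetric argument with $\mathbf{H}^{(\mathbf{l})}_U\mathbf{H}^{(\mathbf{r})}_U$, $\mathbf{T}^{(\mathbf{l})}_{U^*U}$, $\mathfrak{m}^{(\mathbf{lr})}_U$ in place of $\mathbf{H}^{(\mathbf{r})}_U\mathbf{H}^{(\mathbf{l})}_U$, $\mathbf{T}^{(\mathbf{r})}_{UU^*}$, $\mathfrak{m}^{(\mathbf{rl})}_U$.

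\smallskip
\noindent\textbf{The conjugating unitary.} Since $t\mapsto U(t)$ is smooth into $H^s_+(\mathbb{R};\mathbb{C}^{M\times N})$, the bounded self-adjoint operators $\mathbf{T}^{(\mathbf{r})}_{U(t)U(t)^*}$ depend smoothly on $t$ and generate a smooth family of unitaries $\mathcal{W}(t)\in\mathcal{B}_{\mathbb{C}}(L^2_+(\mathbb{R};\mathbb{C}^{M\times N}))$ with $\mathcal{W}(0)=\mathrm{Id}$ and $i\partial_t\mathcal{W}(t)=\mathbf{T}^{(\mathbf{r})}_{U(t)U(t)^*}\mathcal{W}(t)$; being $\mathbb{C}$-linear, $\mathcal{W}(t)$ commutes with right multiplication by constant matrices. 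Writing \eqref{MSzego} as $i\partial_tU=\mathbf{T}^{(\mathbf{r})}_{UU^*}(U)$ and differentiating shows $\tfrac{\mathrm d}{\mathrm dt}\big(\mathcal{W}(t)^*U(t)\big)=0$, i.e. $U(t)=\mathcal{W}(t)U_0$; the first Heisenberg--Lax equation in \eqref{TwoHeiLaxMSzegoR} gives the intertwining $\mathbf{H}^{(\mathbf{r})}_{U(t)}\mathbf{H}^{(\mathbf{l})}_{U(t)}=\mathcal{W}(t)\,\mathbf{H}^{(\mathbf{r})}_{U_0}\mathbf{H}^{(\mathbf{l})}_{U_0}\,\mathcal{W}(t)^*$; and, combining $U(t)=\mathcal{W}(t)U_0$ with the commutation property, one gets the transport identity $\mathcal{W}(t)\,\mathfrak{m}^{(\mathbf{rl})}_{U_0}\,\mathcal{W}(t)^*=\mathfrak{m}^{(\mathbf{rl})}_{U(t)}$. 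Set $\Omega(t):=\mathcal{W}(t)\,e^{it\mathbf{H}^{(\mathbf{r})}_{U_0}\mathbf{H}^{(\mathbf{l})}_{U_0}}$. Then $\Omega(t)$ is unitary, $\Omega(t)\,e^{-it\mathbf{H}^{(\mathbf{r})}_{U_0}\mathbf{H}^{(\mathbf{l})}_{U_0}}U_0=U(t)$, and, using the intertwining, $\partial_t\Omega(t)=i\,\mathbf{Z}(t)\,\Omega(t)$ where $\mathbf{Z}(t):=\mathbf{H}^{(\mathbf{r})}_{U(t)}\mathbf{H}^{(\mathbf{l})}_{U(t)}-\mathbf{T}^{(\mathbf{r})}_{U(t)U(t)^*}$ is bounded and self-adjoint.

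\smallskip
\noindent\textbf{The commutator identity.} The crux is
\begin{equation*}
\big[\,\mathbf{G},\ \mathbf{H}^{(\mathbf{r})}_{U}\mathbf{H}^{(\mathbf{l})}_{U}-\mathbf{T}^{(\mathbf{r})}_{UU^*}\,\big]=\tfrac{1}{2\pi i}\,\mathfrak{m}^{(\mathbf{rl})}_{U},\qquad U\in H^s_+(\mathbb{R};\mathbb{C}^{M\times N}),\ s>\tfrac12,
\end{equation*}
understood as an identity of bounded operators. The first step is the factorization $\mathbf{H}^{(\mathbf{r})}_{U}\mathbf{H}^{(\mathbf{l})}_{U}-\mathbf{T}^{(\mathbf{r})}_{UU^*}=-\,\mathbf{T}^{(\mathbf{r})}_{U}\mathbf{T}^{(\mathbf{r})}_{U^*}$, obtained by splitting $U^*G$ into its $\Pi_{\geq 0}$-part and its complement via $(\Pi_{\geq 0}(G^*U))^*=U^*G-\Pi_{\geq 0}(U^*G)$. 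The second step uses $\mathbf{G}=\Pi_{\geq 0}\circ M_x$ (from \eqref{defGFIntro}): for $G$ in a core for $\mathbf{G}$, the correction term $\tfrac{i}{2\pi}\widehat{U\Pi_{\geq 0}(U^*G)}(0^+)$ vanishes by the Hardy-product fact and $\Pi_{\geq 0}(U^*)=0$, so $[\mathbf{G},\mathbf{T}^{(\mathbf{r})}_U\mathbf{T}^{(\mathbf{r})}_{U^*}]G=U\cdot\big(M_x\Pi_{\geq 0}-\Pi_{\geq 0}M_x\big)(U^*G)=\tfrac{i}{2\pi}U\,\widehat{U^*G}(0)=\tfrac{i}{2\pi}\mathfrak{m}^{(\mathbf{rl})}_U(G)=-\tfrac{1}{2\pi i}\mathfrak{m}^{(\mathbf{rl})}_U(G)$, whence the claim after extension by continuity. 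Granting it, $\partial_t\Omega(t)=i\mathbf{Z}(t)\Omega(t)$ and the transport identity give
\begin{equation*}
\tfrac{\mathrm d}{\mathrm dt}\big(\Omega(t)^*\mathbf{G}\,\Omega(t)\big)=i\,\Omega(t)^*\big[\mathbf{G},\mathbf{Z}(t)\big]\Omega(t)=\tfrac{1}{2\pi}\,\Omega(t)^*\mathfrak{m}^{(\mathbf{rl})}_{U(t)}\Omega(t)=\tfrac{1}{2\pi}\,e^{-it\mathbf{H}^{(\mathbf{r})}_{U_0}\mathbf{H}^{(\mathbf{l})}_{U_0}}\mathfrak{m}^{(\mathbf{rl})}_{U_0}\,e^{it\mathbf{H}^{(\mathbf{r})}_{U_0}\mathbf{H}^{(\mathbf{l})}_{U_0}},
\end{equation*}
so integrating from $0$ and comparing with \eqref{LlrLrlIntro} gives $\Omega(t)^*\mathbf{G}\,\Omega(t)=\mathbf{G}+\mathscr{L}^{(\mathbf{rl})}_{U_0}(t)$; in particular $\mathbf{G}+\mathscr{L}^{(\mathbf{rl})}_{U_0}(t)-z=\Omega(t)^*(\mathbf{G}-z)\Omega(t)$ is invertible for $z\in\mathbb{C}_+$, with $\big(\mathbf{G}+\mathscr{L}^{(\mathbf{rl})}_{U_0}(t)-z\big)^{-1}=\Omega(t)^*(\mathbf{G}-z)^{-1}\Omega(t)$.

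\smallskip
\noindent\textbf{Conclusion.} Applying the last identity to $e^{-it\mathbf{H}^{(\mathbf{r})}_{U_0}\mathbf{H}^{(\mathbf{l})}_{U_0}}U_0$ and using $\Omega(t)\,e^{-it\mathbf{H}^{(\mathbf{r})}_{U_0}\mathbf{H}^{(\mathbf{l})}_{U_0}}U_0=U(t)$ gives $\big(\mathbf{G}+\mathscr{L}^{(\mathbf{rl})}_{U_0}(t)-z\big)^{-1}e^{-it\mathbf{H}^{(\mathbf{r})}_{U_0}\mathbf{H}^{(\mathbf{l})}_{U_0}}U_0=\Omega(t)^*(\mathbf{G}-z)^{-1}U(t)$. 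Finally I would show $\mathscr{I}\circ\Omega(t)^*=\mathscr{I}$ on $H^s_+(\mathbb{R};\mathbb{C}^{M\times N})$ (on which $\mathscr{I}$ is bounded since $s>\tfrac12$): from $\partial_t\Omega(t)^*=-i\,\Omega(t)^*\mathbf{Z}(t)$ and $\mathbf{Z}(t)=-\mathbf{T}^{(\mathbf{r})}_{U(t)}\mathbf{T}^{(\mathbf{r})}_{U(t)^*}$, which sends $H^s_+$ into products $U(t)\cdot H^s_+$ annihilated by $\mathscr{I}$, one has $\mathscr{I}\big((\Omega(t)^*-\mathrm{Id})F\big)=-i\int_0^t\mathscr{I}\big((\Omega(\tau)^*-\mathrm{Id})\mathbf{Z}(\tau)F\big)\,\mathrm d\tau$, and a Gronwall estimate on $t\mapsto\|\mathscr{I}\circ(\Omega(t)^*-\mathrm{Id})\|_{\mathcal{B}(H^s_+,\mathbb{C}^{M\times N})}$ (finite and locally bounded, as $\mathbf{Z}(t)$, $\Omega(t)^*$ and $\mathscr{I}$ are bounded on $H^s_+$ uniformly on compact time intervals) forces it to vanish. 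Since $(\mathbf{G}-z)^{-1}U(t)\in H^s_+$, this together with the $t=0$ identity applied to $V=U(t)$ gives $\mathscr{I}\big(\Omega(t)^*(\mathbf{G}-z)^{-1}U(t)\big)=\mathscr{I}\big((\mathbf{G}-z)^{-1}U(t)\big)=2\pi i\,\underline{U(t)}(z)$, which is \eqref{expforMSzegoR}. I expect the commutator identity to be the main obstacle --- making the manipulations of the unbounded, non-self-adjoint operator $\mathbf{G}$ rigorous (choice of core, the precise meaning of $[\mathbf{G},\,\cdot\,]$, the continuity extension) is delicate, as is tracking the matrix ordering --- with the functional-analytic bookkeeping in the Gronwall step a secondary one.
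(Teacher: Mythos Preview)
Your overall strategy---the conjugating unitary $\mathcal{W}(t)$, the intertwining with the Lax operator, the commutator identity $[\mathbf{G},\mathbf{Z}(t)]=\tfrac{1}{2\pi i}\mathfrak{m}^{(\mathbf{rl})}_{U(t)}$, and the resulting formula $\Omega(t)^*\mathbf{G}\,\Omega(t)=\mathbf{G}+\mathscr{L}^{(\mathbf{rl})}_{U_0}(t)$---is correct and is exactly the paper's approach (the paper works with the $(\mathbf{lr})$ side and obtains the $(\mathbf{rl})$ formula by transpose, but this is cosmetic). Your packaging via $\Omega(t)=\mathcal{W}(t)e^{it\mathbf{H}^{(\mathbf{r})}_{U_0}\mathbf{H}^{(\mathbf{l})}_{U_0}}$ is a clean way to state the conjugation identity that the paper writes in two steps.

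The gap is in the final paragraph. The functional $\mathscr{I}$ is \emph{not} bounded on $H^s_+$ for any $s$: take $\hat F_n=n^{1/2}\mathbf{1}_{(0,1/n]}$, so that $\|F_n\|_{H^s}$ stays bounded while $\hat F_n(0^+)=n^{1/2}\to\infty$. Consequently the norm $\|\mathscr{I}\circ(\Omega(t)^*-\mathrm{Id})\|_{\mathcal{B}(H^s_+,\mathbb{C}^{M\times N})}$ on which you run Gronwall is not defined. A second, related issue is that $(\mathbf{G}-z)^{-1}U(t)$ is not in $H^s_+$ in general: solving $(\mathbf{G}-z)\Phi=F$ gives $\Phi(x)=(x-z)^{-1}\big(F(x)+\tfrac{i}{2\pi}\hat\Phi(0^+)\big)$, and the term $c\,(x-z)^{-1}$ only lies in $H^\sigma_+$ for $\sigma<\tfrac12$. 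So the space on which you close the argument is wrong on both ends.

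The paper circumvents this by never treating $\mathscr{I}$ as a bounded functional: it pairs everything against the approximate identity $\chi_\epsilon(x)=(1-i\epsilon x)^{-1}$ and passes to the limit $\epsilon\to 0^+$ using dominated convergence (its Lemma~$\ref{LemWtstar}$), which replaces your Gronwall step. Your argument can also be repaired by running Gronwall on $\mathrm{Dom}(\mathbf{G})$ with the graph norm rather than on $H^s_+$: there $\mathscr{I}$ \emph{is} bounded (it is the $H^1(\mathbb{R}_+^*)$ trace of $\hat F$ at $0^+$), the resolvent $(\mathbf{G}-z)^{-1}$ lands in $\mathrm{Dom}(\mathbf{G})$ by definition, and $\mathbf{Z}(t)$, $\Omega(t)$, $\Omega(t)^*$ all preserve $\mathrm{Dom}(\mathbf{G})$ boundedly by the commutator identities you already have. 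Either route closes the gap; as written, the Gronwall step does not.
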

\begin{rem}
Since $\mathscr{L}^{(\mathbf{rl})}_{U_0} (t) \geq 0$, $\mathscr{L}^{(\mathbf{lr})}_{U_0} (t) \geq 0$,   both  $-i \mathscr{L}^{(\mathbf{rl})}_{U_0} (t)$ and $-i \mathscr{L}^{(\mathbf{lr})}_{U_0} (t)$ are bounded dissipative (also accretive)
perturbations of the infinitesial generator $-i \mathbf{G}: \mathrm{Dom}(\mathbf{G})^{M\times N} \to L^2_+(\mathbb{R}; \mathbb{C}^{M \times N})$ of the contraction semigroup $(\mathtt{S}(\eta)^*)_{\eta \geq 0}$. Then theorem 3.7 of Davies \cite{Davies1980} yields that both $-i \mathbf{G} -i \mathscr{L}^{(\mathbf{rl})}_{U_0} (t)$ and $-i \mathbf{G} -i \mathscr{L}^{(\mathbf{lr})}_{U_0} (t)$ are generators of some   contraction semigroups. According to Hille--Yosida theorem (theorem \uppercase\expandafter{\romannumeral10}. 47a and theorem \uppercase\expandafter{\romannumeral10}. 48 of Reed--Simon \cite{Reed-Simon2}), $-i \mathbf{G} -i \mathscr{L}^{(\mathbf{rl})}_{U_0} (t)$ and $-i \mathbf{G} -i \mathscr{L}^{(\mathbf{lr})}_{U_0} (t)$ are both maximal dissipative operators whose domains of definition are $\mathrm{Dom}(\mathbf{G})^{M\times N}$. For any $z \in \mathbb{C}_+$, the operators $ \mathbf{G} +  \mathscr{L}^{(\mathbf{rl})}_{U_0} (t)- z $ and $ \mathbf{G} +  \mathscr{L}^{(\mathbf{lr})}_{U_0} (t)- z $ are both invertible from $\mathrm{Dom}(\mathbf{G})^{M\times N}$ to $L^2_+(\mathbb{R}; \mathbb{C}^{M\times N}) $. Since 
\begin{equation}\label{Ran(G+L-z-)1}
\mathrm{Ran}\left( ( \mathbf{G} +  \mathscr{L}^{(\mathbf{rl})}_{U_0} (t)- z )^{-1}\right)  = \mathrm{Ran}\left( ( \mathbf{G} +  \mathscr{L}^{(\mathbf{rl})}_{U_0} (t)- z )^{-1}\right) = \mathrm{Dom}(\mathbf{G})^{M\times N},
\end{equation}where $\mathscr{I}$ in \eqref{IntOpforR} can be defined, the explicit formula \eqref{expforMSzegoR} is well defined.
\end{rem}
  
\noindent Then $U(t)$ can be described as the $L^2_+(\mathbb{R}; \mathbb{C}^{M \times N})$-limit of its Poisson integral  $\underline{U}(t)$, when $\mathrm{Im}z \to 0^+$, thanks to formula \eqref{CVPoiIntforR}. The proof of \eqref{expforMSzegoR} relies on the Lax pair structure in proposition $\ref{LaxPairThmR}$ and the conjugation acting method similar to G\'erard--Pushnitski \cite{GerPush2023}. A unitary group and another family of unitary operators act  simultaneously on the infinitesimal generator $\mathbf{G}$. Then  the Poisson integral of Hamiltonian flow of \eqref{MSzego} can be linearized by identifying these two actions.\\
 
\noindent This paper is organized as follows. The matrix-valued functional spaces and inequalities are recalled in section $\ref{SectPrelim}$. In section $\ref{SectLaxPair}$, we establish the Lax pair structure of the matrix Szeg\H{o} equation \eqref{MSzego} on the real line. Section $\ref{SectExplForR}$  is dedicated to establishing the explicit formula \eqref{expforMSzegoR}  and proving the main theorem $\ref{MSzegoRExp}$.
 
\begin{center}
$Acknowledgments$
\end{center}
The author is grateful to Georgia Institute of Technology for financially supporting the author's research.

\section{Preliminaries}\label{SectPrelim}
\noindent In this section, we give some preliminaries of the matrix-valued functional spaces. Given $p \in [1, +\infty]$,  $s \geq 0$ and $M,N \in \mathbb{N}_+$, a matrix function $U=\left(U_{kj}\right)_{1 \leq k \leq M, 1 \leq j \leq N}$ belongs to $L^p(\mathbb{R}; \mathbb{C}^{M \times N})$ if and only if its $kj$-entry $U_{kj}$ belongs to $L^p(\mathbb{R}; \mathbb{C})$. The Fourier transform of $U \in L^1(\mathbb{R}; \mathbb{C}^{M \times N})$ is given by
\begin{equation}\label{Fouriertr}
\hat{U}(\xi)=\mathscr{F}(U)(\xi) = \int_{\mathbb{R}}U(x)e^{-i x \xi} \mathrm{d}x \in \mathbb{C}^{M \times N}, \quad \forall \xi \in \mathbb{R}.
\end{equation}Equipped with the following inner product,
\begin{equation}\label{InnerPro}
(U, V) \in L^2(\mathbb{R};  \mathbb{C}^{M\times N})^2 \mapsto \langle U, V \rangle_{L^2(\mathbb{R};  \mathbb{C}^{M\times N}) } :=  \int_{\mathbb{R}}\mathrm{tr} \left( U(x) V(x)^{*}\right)\mathrm{d}x \in \mathbb{C},
\end{equation}$L^2(\mathbb{R};  \mathbb{C}^{M\times N})$ is a $\mathbb{C}$-Hilbert space. The Plancherel's theorem shows that the renormalized Fourier transform $(2\pi)^{-\frac{1}{2}}\mathscr{F}$ can be extended to a $\mathbb{C}$-Hilbert isomorphism on $L^2(\mathbb{R};  \mathbb{C}^{M\times N})$ and we have
\begin{equation}\label{Plancherel}
\langle U, V \rangle_{L^2(\mathbb{R};  \mathbb{C}^{M\times N}) } = \tfrac{1}{2\pi} \langle \hat{U}, \hat{V} \rangle_{L^2(\mathbb{R};  \mathbb{C}^{M\times N}) }, \quad \forall U, V \in L^2(\mathbb{R};  \mathbb{C}^{M\times N}).
\end{equation}A matrix function $U=\left(U_{kj}\right)_{1 \leq k \leq M, 1 \leq j \leq N}$ belongs to $H^s(\mathbb{R}; \mathbb{C}^{M \times N})$ if and only if its $kj$-entry $U_{kj}$ belongs to the Sobolev space $H^s(\mathbb{R}; \mathbb{C})$, $\forall s \in \mathbb{R}$. Equipped with the following inner product
\begin{equation}\label{InnerProHs}
(U, V) \in H^s(\mathbb{R};  \mathbb{C}^{M\times N})^2 \mapsto \langle U, V \rangle_{H^s(\mathbb{R};  \mathbb{C}^{M\times N}) } :=  \tfrac{1}{2 \pi} \int_{\mathbb{R}}\mathrm{tr} \left( \hat{U}(\xi) \hat{V}(\xi)^{*}\right)(1+ |\xi|^2)^s \mathrm{d} \xi \in \mathbb{C},
\end{equation}the matrix-valued Sobolev space $H^s(\mathbb{R};  \mathbb{C}^{M\times N})$ is a $\mathbb{C}$-Hilbert space. We set
\begin{equation}\label{LpMat}
\begin{split}
& \|U\|_{L^p(\mathbb{R}; \mathbb{C}^{M \times N})}^2 := \sum_{k=1}^M \sum_{j=1}^N \|U_{kj}\|_{L^p(\mathbb{R}; \mathbb{C})}^2; \quad 
 \|U\|_{H^s(\mathbb{R}; \mathbb{C}^{M \times N})}^2 := \sum_{k=1}^M \sum_{j=1}^N \|U_{kj}\|_{H^s(\mathbb{R}; \mathbb{C})}^2 .
\end{split}
\end{equation}If $p^{-1}+q^{-1} = r^{-1}$ for some $p,q,r \geq 1$, we have the following H\"older's inequality
\begin{equation}\label{Hoelder}
\|UV\|_{L^r(\mathbb{R}; \mathbb{C}^{M \times P})} \leq \|U\|_{L^p(\mathbb{R}; \mathbb{C}^{M \times N})} \|V\|_{L^q(\mathbb{R}; \mathbb{C}^{N \times P})}, \quad \forall U \in L^p(\mathbb{R}; \mathbb{C}^{M \times N}), \; V \in L^q(\mathbb{R}; \mathbb{C}^{N \times P}).
\end{equation}The transpose transform $\mathfrak{T}$ in \eqref{Transpose} preserves every $L^p$-norm and $H^s$-norm, $\forall p \geq 1$, $\forall s \in \mathbb{R}$, i.e.
\begin{equation}
\|U^{\mathrm{T}}\|_{L^p(\mathbb{R}; \mathbb{C}^{N  \times M})} = \|U\|_{L^p(\mathbb{R}; \mathbb{C}^{M \times N})}, \quad \|U^{\mathrm{T}}\|_{H^s(\mathbb{R}; \mathbb{C}^{N  \times M})} = \|U\|_{H^s (\mathbb{R}; \mathbb{C}^{M \times N})}
\end{equation}The negative Szeg\H{o} projector  $\Pi_{< 0} = \mathrm{id}_{L^2(\mathbb{R};  \mathbb{C}^{M\times N})} - \Pi_{\geq 0} = \mathbf{1}_{(-\infty, 0)}(\mathrm{D})$ on $L^2(\mathbb{R};  \mathbb{C}^{M\times N})$ is given by
\begin{equation}\label{Szego-}
\widehat{\Pi_{<0}(U)} (\xi_1) =0, \quad  \widehat{\Pi_{<0}(U)} (\xi_2) = \hat{U}(\xi_2), \quad \forall \xi_1 > 0 > \xi_2, \quad \forall U \in L^2(\mathbb{R};  \mathbb{C}^{M\times N}).
 \end{equation}The filtered matrix-valued Sobolev spaces are given by 
 \begin{equation}
 H^s_+(\mathbb{R};  \mathbb{C}^{M\times N}):=  \Pi_{\geq 0}\left(H^s(\mathbb{T};  \mathbb{C}^{M\times N}) \right), \quad  H^s_-(\mathbb{R};  \mathbb{C}^{M\times N}):=  \Pi_{<0}\left(H^s(\mathbb{T};  \mathbb{C}^{M\times N}) \right), \quad \forall s \geq 0 .
 \end{equation}Then we have the following orthogonal decomposition for the $\mathbb{C}$-Hilbert space $ L^2(\mathbb{R};  \mathbb{C}^{M\times N})$,
 \begin{equation}
 L^2(\mathbb{R};  \mathbb{C}^{M\times N}) = L^2_+(\mathbb{R};  \mathbb{C}^{M\times N}) \bigoplus L^2_-(\mathbb{R};  \mathbb{C}^{M\times N}), \quad L^2_+(\mathbb{R};  \mathbb{C}^{M\times N}) \perp L^2_-(\mathbb{R};  \mathbb{C}^{M\times N}).
 \end{equation}If $U \in L^2(\mathbb{R};  \mathbb{C}^{M \times N})$, then $\Pi_{< 0} U = \left(\Pi_{\geq 0} (U^*) \right)^* \in L^2_-(\mathbb{R};  \mathbb{C}^{M \times N})$ and
\begin{equation}\label{Pi<0FR}
U= \Pi_{\geq 0} U + \Pi_{< 0} U = \Pi_{\geq 0} U + \left(\Pi_{\geq 0} (U^*) \right)^*.
\end{equation}

\begin{lem}
If $U \in L^2(\mathbb{R}; \mathbb{C}^{M\times N})$, $A\in \mathbb{C}^{N \times P}$, $B\in \mathbb{C}^{Q\times M}$ for some $M,N,P,Q \in \mathbb{N}_+$, then
\begin{equation}\label{matrixXL^2}
\begin{split}
& \Pi_{\geq 0} (UA) = \left(\Pi_{\geq 0}U \right)A \in L^2_+ (\mathbb{R}; \mathbb{C}^{M \times P}); \quad \Pi_{< 0} (UA) = \left(\Pi_{< 0}U\right)A \in L^2_- (\mathbb{R}; \mathbb{C}^{M \times P}); \\
& \Pi_{\geq 0} (BU) = B \left(\Pi_{\geq 0}U \right) \in  L^2_+ (\mathbb{R}; \mathbb{C}^{Q \times N}); \quad  \Pi_{< 0} (BU) = B \left(\Pi_{< 0}U \right) \in  L^2_- (\mathbb{R}; \mathbb{C}^{Q \times N}).
\end{split}
\end{equation}
\end{lem}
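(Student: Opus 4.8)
The plan is to verify both pairs of identities on the Fourier side and then invoke injectivity of the $L^2$ Fourier transform. The underlying principle is that multiplication by a \emph{constant} matrix commutes with the Fourier multiplier $\mathbf{1}_{[0,+\infty)}(\mathrm{D})$, since it acts diagonally (entrywise, with constant coefficients) and does not see the frequency variable.

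First I would note that $UA \in L^2(\mathbb{R};\mathbb{C}^{M\times P})$ and $BU \in L^2(\mathbb{R};\mathbb{C}^{Q\times N})$: writing $(UA)_{kj} = \sum_{l=1}^{N} U_{kl}A_{lj}$ and $(BU)_{kj} = \sum_{l=1}^{M} B_{kl}U_{lj}$ exhibits each entry as a finite $\mathbb{C}$-linear combination of $L^2(\mathbb{R};\mathbb{C})$ functions, hence in $L^2(\mathbb{R};\mathbb{C})$. The very same entrywise computation together with $\mathbb{C}$-linearity of the Fourier transform (extended to $L^2$ via Plancherel, cf. \eqref{Plancherel}) gives, for a.e. $\xi\in\mathbb{R}$,
\[
\widehat{UA}(\xi) = \hat{U}(\xi)A, \qquad \widehat{BU}(\xi) = B\,\hat{U}(\xi).
\]
A small word of care is needed here because $U$ is merely in $L^2$, not $L^1$, so these formulas are understood via the Plancherel extension rather than the integral formula \eqref{Fouriertr}; they hold on the dense subspace $L^1\cap L^2$ by the integral formula and then extend by continuity of both sides on $L^2$.

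Next I would apply the definition \eqref{MSzegoopR} of $\Pi_{\geq 0}$, namely $\widehat{\Pi_{\geq 0}V}(\xi) = \mathbf{1}_{\xi\geq 0}\,\hat V(\xi)$. For a.e. $\xi$,
\[
\widehat{\Pi_{\geq 0}(UA)}(\xi) = \mathbf{1}_{\xi\geq 0}\,\hat{U}(\xi)A = \bigl(\mathbf{1}_{\xi\geq 0}\hat{U}(\xi)\bigr)A = \widehat{\Pi_{\geq 0}U}(\xi)\,A = \widehat{(\Pi_{\geq 0}U)A}(\xi),
\]
where the last step uses the entrywise identity above applied to $\Pi_{\geq 0}U$ in place of $U$. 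By injectivity of the Fourier transform on $L^2$, $\Pi_{\geq 0}(UA) = (\Pi_{\geq 0}U)A$, and this lies in $L^2_+(\mathbb{R};\mathbb{C}^{M\times P})$ by definition. The identity $\Pi_{\geq 0}(BU) = B(\Pi_{\geq 0}U)$ is obtained by the identical argument with left multiplication in place of right multiplication. Finally, the two statements involving $\Pi_{<0}$ follow either by rerunning the computation with the multiplier $\mathbf{1}_{(-\infty,0)}$ of \eqref{Szego-}, or, more quickly, by subtraction using \eqref{Pi<0FR}: $\Pi_{<0}(UA) = UA - \Pi_{\geq 0}(UA) = UA - (\Pi_{\geq 0}U)A = (\Pi_{<0}U)A$, and likewise for $BU$. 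I do not expect any genuine obstacle in this lemma; the only subtlety is the one flagged above, that the product-with-a-constant-matrix identities on the Fourier side must be justified through the Plancherel extension since $U\notin L^1$ in general.
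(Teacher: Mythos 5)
Your proof is correct and is the natural Fourier-side argument one would expect. The paper itself does not spell out a proof but simply refers to Lemma 2.1 of \cite{SUNMSzego}; your entrywise reduction, linearity of the Plancherel-extended Fourier transform, and commutation of constant-matrix multiplication with the Fourier multiplier $\mathbf{1}_{[0,\infty)}(\mathrm{D})$ is exactly the standard route, and your extra care about passing from the $L^1$ integral formula to the $L^2$ Plancherel extension is appropriate.
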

\begin{proof}
See lemma 2.1 of Sun \cite{SUNMSzego}.
\end{proof}
\begin{lem}\label{ABL2+forR}
Given $A_+ \in L^2_+(\mathbb{R}; \mathbb{C}^{M\times N})$ and $B_+ \in L^2_+(\mathbb{R}; \mathbb{C}^{N \times d})$ for some $M,N, d \in \mathbb{N}_+$, if one of $A_+, B_+$ is essentially bounded , then $A_+  B_+ \in L^2_+(\mathbb{R}; \mathbb{C}^{M \times d})$. Given $A_- \in L^2_-(\mathbb{R}; \mathbb{C}^{M\times N})$ and $B_- \in L^2_-(\mathbb{R}; \mathbb{C}^{N \times d})$, if one of $A_-, B_-$ is essentially bounded , then $A_-  B_- \in L^2_-(\mathbb{R}; \mathbb{C}^{M \times d})$.
\end{lem}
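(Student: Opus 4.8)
The plan is to reduce the matrix assertion to its scalar counterpart and then settle the scalar case by a Fourier-support argument, finally deducing the $L^2_-$ statement from the $L^2_+$ one by taking adjoints.

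First I would unwind the matrix product entrywise: writing $A_+=\big((A_+)_{k\ell}\big)$ and $B_+=\big((B_+)_{\ell j}\big)$, one has $(A_+B_+)_{kj}=\sum_{\ell=1}^{N}(A_+)_{k\ell}(B_+)_{\ell j}$, a finite sum. By the definition \eqref{LpMat} of the matrix $L^p$- and $H^s$-norms and of $L^2_+(\mathbb{R};\mathbb{C}^{M\times N})=\Pi_{\geq0}(L^2)$, membership in $L^2_+(\mathbb{R};\mathbb{C}^{M\times d})$ is equivalent to each scalar entry lying in $L^2_+(\mathbb{R};\mathbb{C})$, and ``$A_+$ essentially bounded'' means every entry $(A_+)_{k\ell}\in L^\infty(\mathbb{R};\mathbb{C})$. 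Hence it suffices to prove the scalar statement: if $f\in L^2_+(\mathbb{R};\mathbb{C})$ and $g\in L^2_+(\mathbb{R};\mathbb{C})\cap L^\infty(\mathbb{R};\mathbb{C})$, then $fg\in L^2_+(\mathbb{R};\mathbb{C})$; indeed each entry of $A_+B_+$ is then a finite sum of such products (with the bounded factor supplied by whichever of $A_+,B_+$ lies in $L^\infty$) and $L^2_+(\mathbb{R};\mathbb{C})$ is a linear subspace.

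For the scalar statement, Hölder's inequality \eqref{Hoelder} with exponents $2,\infty,2$ gives $\|fg\|_{L^2(\mathbb{R};\mathbb{C})}\leq\|g\|_{L^\infty(\mathbb{R};\mathbb{C})}\|f\|_{L^2(\mathbb{R};\mathbb{C})}<\infty$, so $fg\in L^2(\mathbb{R};\mathbb{C})$, and Cauchy--Schwarz gives $fg\in L^1(\mathbb{R};\mathbb{C})$ as well. Thus $\widehat{fg}$ is a continuous function, which on $L^1\cap L^2$ coincides a.e.\ with $\tfrac{1}{2\pi}\hat f*\hat g$, a well-defined continuous function of $\xi$ since $\hat f,\hat g\in L^2$. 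Because $f,g\in L^2_+$ we have $\mathrm{supp}\,\hat f\subseteq[0,+\infty)$ and $\mathrm{supp}\,\hat g\subseteq[0,+\infty)$, so for $\xi<0$ the integrand $\hat f(\eta)\hat g(\xi-\eta)$ vanishes for a.e.\ $\eta$ (the conditions $\eta\geq0$ and $\xi-\eta\geq0$ cannot hold simultaneously), whence $(\hat f*\hat g)(\xi)=0$ for all $\xi<0$. Therefore $\widehat{fg}$ is supported in $[0,+\infty)$, i.e.\ $fg=\Pi_{\geq0}(fg)\in L^2_+(\mathbb{R};\mathbb{C})$. (Alternatively, one may argue via Poisson integrals: $\|\mathfrak{P}_y\|_{L^1(\mathbb{R})}=1$ forces $\sup_{y>0}\|\underline g(\cdot+yi)\|_{L^\infty(\mathbb{R};\mathbb{C})}\leq\|g\|_{L^\infty(\mathbb{R};\mathbb{C})}$, so $\underline f\,\underline g\in\mathrm{Hol}(\mathbb{C}_+;\mathbb{C})$ satisfies $\sup_{y>0}\int_{\mathbb{R}}|\underline f\,\underline g|^2\leq\|g\|_{L^\infty(\mathbb{R};\mathbb{C})}^2\|f\|_{L^2(\mathbb{R};\mathbb{C})}^2<\infty$, hence lies in $\mathbb{H}^2(\mathbb{C}_+;\mathbb{C})$; identifying it with its boundary value $h\in L^2_+$ and combining \eqref{CVPoiIntforR} with the $L^1$-convergence $(\tau_{-yi}\underline f)(\tau_{-yi}\underline g)\to fg$ as $y\to0^+$ shows $h=fg$.)

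Finally, for the $L^2_-$ assertion I would pass to adjoints: by \eqref{Pi<0FR}, $W\in L^2_+(\mathbb{R};\mathbb{C}^{P\times Q})$ if and only if $W^*\in L^2_-(\mathbb{R};\mathbb{C}^{Q\times P})$. Given $A_-\in L^2_-(\mathbb{R};\mathbb{C}^{M\times N})$ and $B_-\in L^2_-(\mathbb{R};\mathbb{C}^{N\times d})$ with one of them essentially bounded, set $C_+:=A_-^*\in L^2_+(\mathbb{R};\mathbb{C}^{N\times M})$ and $D_+:=B_-^*\in L^2_+(\mathbb{R};\mathbb{C}^{d\times N})$; then one of $C_+,D_+$ is essentially bounded, so the first part gives $D_+C_+\in L^2_+(\mathbb{R};\mathbb{C}^{d\times M})$, and therefore $A_-B_-=(D_+C_+)^*\in L^2_-(\mathbb{R};\mathbb{C}^{M\times d})$. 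I do not expect a genuine obstacle in this lemma; the only mildly technical point is the identification on $L^1\cap L^2$ of the $L^1$- and $L^2$-Fourier transforms of $fg$ and the pointwise meaning of the convolution $\hat f*\hat g$, both of which are standard.
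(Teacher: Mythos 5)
Your proof is correct. Note that the paper does not actually prove this lemma; its ``proof'' is simply a pointer to lemma 2.3 and corollary 2.4 of Sun \cite{SUNInterCMSDNLS}, so there is no in-text argument to compare against. What you give is the standard self-contained argument for such Hardy-space statements: reduce to the scalar case via \eqref{LpMat} and entrywise multiplication; observe $fg\in L^1\cap L^2$ so that $\widehat{fg}=\tfrac{1}{2\pi}\hat f*\hat g$ is a genuine continuous function rather than just an $L^2$-distribution; and use that the convolution of two functions supported in $[0,+\infty)$ is again supported in $[0,+\infty)$. Your passage from the $L^2_+$ assertion to the $L^2_-$ assertion via $W\mapsto W^*$ and \eqref{Pi<0FR}, with the reversal of the order of the factors $A_-B_-=(B_-^*A_-^*)^*=(D_+C_+)^*$, is also handled correctly. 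The Poisson-integral alternative you sketch is the other natural route (and is the one more in the spirit of the Hardy-space identification \eqref{IdenHardyR} used throughout the paper), but either suffices; I see no gap.
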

\begin{proof}
See lemma 2.3 and corollary 2.4 of Sun \cite{SUNInterCMSDNLS}.
\end{proof}
\noindent Inspired from \cite{poDCDS2011}, \cite{BOExp2022}, \cite{GerPush2023} and \cite{SUNInterCMSDNLS}, we introduce an  auxiliary approximate identity
\begin{equation}\label{chieps}
\chi_{\epsilon}(x) =\tfrac{2\pi}{\epsilon}\Pi_{\geq 0} \left(\mathfrak{P}_{\epsilon^{-1}} \right) (x) =  (1-i\epsilon x)^{-1}, \quad \forall 0 < \epsilon <1,
\end{equation}where $\mathfrak{P}_y$ is the Poisson kernel defined in \eqref{PoiIntforR}. Then $\chi_{\epsilon} \in H_+^{+\infty}(\mathbb{R}; \mathbb{C}):=\bigcap_{s\geq 0} H^s_+(\mathbb{R}; \mathbb{C})$. Let $\mathbb{E}_{kj}^{(M N)}$ denote the $M\times N$ matrix whose $kj$-entry is $1$, whose other entries are all $0$. For any $p\in \mathbb{N}$, we have
\begin{equation}\label{CVdominesca}
\lim_{\epsilon \to 0^+}\|\chi_{\epsilon} U -U\|_{H^p(\mathbb{R}; \mathbb{C}^{M \times N})}=0, \quad \lim_{\epsilon \to 0^+} \langle F, \chi_{\epsilon} \mathbb{E}^{(MN)}_{kj} \rangle_{L^2_+(\mathbb{R}; \mathbb{C}^{M \times N})} = \hat{F}_{kj}(0^+),
\end{equation}$\forall U \in H^p(\mathbb{R}; \mathbb{C}^{M \times N})$, $\forall F \in L^2_+(\mathbb{R}; \mathbb{C}^{M\times N})$ such that $\hat{F}_{kj}$ is right continuous at $0$, by using Lebesgue's dominated convergence theorem. Then  $\mathscr{I}(F)$ in \eqref{IntOpforR} can be expressed in terms of the $L^2_+(\mathbb{R}; \mathbb{C}^{M \times N})$-inner products of $F$ and the auxiliary functions $\chi_{\epsilon} \mathbb{E}^{(MN)}_{kj} \in H_+^{+\infty}(\mathbb{R}; \mathbb{C}^{M \times N})$,
\begin{equation}\label{HATF0POsitiv}
\mathscr{I}(F) = \hat{F}(0^+)=\sum_{k=1}^M \sum_{j=1}^N \lim_{\epsilon \to 0^+} \langle F, \chi_{\epsilon}   \mathbb{E}^{(MN)}_{kj} \rangle_{L^2_+(\mathbb{R}; \mathbb{C}^{M \times N})}\mathbb{E}^{(MN)}_{kj} \in \mathbb{C}^{M \times N}.
\end{equation}The Poisson integral $\underline{U}$ defined in \eqref{PoiIntforR} can be expressed in terms of the resolvent of $\mathbf{G}$ and the original function $U \in L^2_+(\mathbb{R}; \mathbb{C}^{M \times N})$, thanks to the next lemma.
\begin{lem}[Sun \cite{SUNInterCMSDNLS}]\label{LeminvforR}
Given any $z\in \mathbb{C}_+$, $M,N \in \mathbb{N}_+$, if $U \in L^2_+(\mathbb{R}; \mathbb{C}^{M \times N})$, then  
\begin{equation}\label{invForR}
\underline{U}(z) = \tfrac{1}{2\pi i} \mathscr{I}\left((\mathbf{G}-z)^{-1}(U) \right) = \tfrac{1}{2\pi i}\sum_{k=1}^M \sum_{j=1}^N \lim_{\epsilon \to 0^+} \langle (\mathbf{G}-z)^{-1}(U), \chi_{\epsilon}\mathbb{E}^{(MN)}_{kj} \rangle_{L^2_+(\mathbb{R}; \mathbb{C}^{M \times N})} \mathbb{E}^{(MN)}_{kj}.
\end{equation} 
\end{lem}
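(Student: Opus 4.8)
The plan is to turn the identity into an explicit first-order linear ODE on the Fourier side and to solve it by an integrating factor. Fix $z\in\mathbb{C}_+$ and $U\in L^2_+(\mathbb{R};\mathbb{C}^{M\times N})$. First I would check that $z$ lies in the resolvent set of $\mathbf{G}$: since $-i\mathbf{G}$ generates the contraction semigroup $(\mathtt{S}(\eta)^*)_{\eta\geq 0}$, the half-plane $\{\zeta:\mathrm{Re}\,\zeta>0\}$ lies in $\rho(-i\mathbf{G})$, and the identity $\mathbf{G}-z=i\big((-i\mathbf{G})-(-iz)\big)$ together with $\mathrm{Re}(-iz)=\mathrm{Im}\,z>0$ shows that $\mathbf{G}-z$ is boundedly invertible. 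Hence $F:=(\mathbf{G}-z)^{-1}U$ is well defined and lies in $\mathrm{Dom}(\mathbf{G})^{M\times N}$, so $\hat F|_{\mathbb{R}_+^*}\in H^1(\mathbb{R}_+^*;\mathbb{C}^{M\times N})$; in particular $\hat F$ extends continuously to $[0,+\infty)$, the value $\hat F(0^+)$ exists so that $\mathscr{I}(F)$ is well defined, and $\hat F(\xi)\to 0$ as $\xi\to+\infty$ by Sobolev embedding on the half-line.

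Next I would rewrite $(\mathbf{G}-z)F=U$ on the Fourier side. Fourier-transforming the pointwise formula \eqref{defGFIntro}, the term $xF(x)$ becomes $i\,\partial_\xi\hat F(\xi)$ while the constant matrix $-\tfrac{i}{2\pi}\hat F(0^+)$ becomes $-i\hat F(0^+)\delta_0$; the latter exactly cancels the Dirac mass at the origin coming from the jump of the distributional derivative of $\hat F\,\mathbf{1}_{\{\xi>0\}}$, so $\widehat{\mathbf{G}F}(\xi)=i\hat F'(\xi)$ for $\xi>0$. Thus on $(0,+\infty)$ one has $i\,\hat F'(\xi)-z\,\hat F(\xi)=\hat U(\xi)$. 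Multiplying by $e^{iz\xi}$ gives $\partial_\xi\big(e^{iz\xi}\hat F(\xi)\big)=-i\,e^{iz\xi}\hat U(\xi)$; since $\mathrm{Im}\,z>0$, $e^{iz\xi}$ is bounded and decays exponentially, $\hat U\in L^2(\mathbb{R};\mathbb{C}^{M\times N})$ with support in $[0,+\infty)$, and $e^{iz\xi}\hat F(\xi)\to 0$ at $+\infty$, so integrating from $\xi$ to $+\infty$ yields $\hat F(\xi)=i\,e^{-iz\xi}\int_\xi^{+\infty}e^{iz\xi'}\hat U(\xi')\,\mathrm{d}\xi'$. Letting $\xi\to 0^+$ and comparing with \eqref{PoiIntforR}, $\mathscr{I}(F)=\hat F(0^+)=i\int_0^{+\infty}e^{iz\xi'}\hat U(\xi')\,\mathrm{d}\xi'=2\pi i\,\underline{U}(z)$, which is the first equality; the second equality is then just \eqref{HATF0POsitiv} applied to $F=(\mathbf{G}-z)^{-1}(U)$.

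I expect the only genuine obstacle to be the careful identification $\widehat{\mathbf{G}F}=i\hat F'$ on $(0,+\infty)$ for $F\in\mathrm{Dom}(\mathbf{G})$ and the accompanying fact that, among $L^2$-solutions of the ODE, the correct one is singled out by its vanishing at $+\infty$; both are consequences of the $H^1(\mathbb{R}_+^*)$-regularity of $\hat F$ and Sobolev embedding, after which everything is elementary calculus. An alternative route avoiding the ODE is to use the Laplace representation of the resolvent of a $C_0$-semigroup generator, $(\mathbf{G}-z)^{-1}=i\int_0^{+\infty}e^{iz\eta}\,\mathtt{S}(\eta)^*\,\mathrm{d}\eta$ (valid since $\mathrm{Im}\,z>0$), together with the elementary identity $\mathscr{I}(\mathtt{S}(\eta)^*U)=\hat U(\eta)$ for $\eta>0$; pushing $\mathscr{I}$ through the Bochner integral by means of \eqref{HATF0POsitiv} and dominated convergence again gives $\mathscr{I}((\mathbf{G}-z)^{-1}U)=i\int_0^{+\infty}e^{iz\eta}\hat U(\eta)\,\mathrm{d}\eta=2\pi i\,\underline{U}(z)$. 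Since the lemma is asserted for arbitrary $U\in L^2_+$, the ODE approach is marginally cleaner because $(\mathbf{G}-z)^{-1}U$ automatically belongs to $\mathrm{Dom}(\mathbf{G})$, rendering a density argument unnecessary.
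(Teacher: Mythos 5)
Your argument is correct and complete. Note that this paper gives no proof of the lemma beyond the citation to lemma~4.2 of Sun \cite{SUNInterCMSDNLS}, so there is no in-paper argument to compare against; what you supply is a self-contained derivation. Your Fourier-side ODE route is sound: the computation $\widehat{\mathbf{G}F}=i\hat F'$ on $(0,+\infty)$ is exactly the cancellation of the Dirac mass at $\xi=0$ (the jump $\hat F(0^+)\delta_0$ in the distributional derivative of $\hat F\,\mathbf 1_{\xi>0}$ against the Fourier transform $-i\hat F(0^+)\delta_0$ of the constant in \eqref{defGFIntro}), the integrating factor gives the unique $H^1(\mathbb{R}_+^*)$ solution with the correct behaviour at $+\infty$, and the limit $\xi\to0^+$ matches the Poisson-integral formula \eqref{PoiIntforR}. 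The reduction of the second equality in \eqref{invForR} to \eqref{HATF0POsitiv} is immediate. Two small points worth being explicit about if you were to write this up in full: (i) $H^1(\mathbb{R}_+^*)\hookrightarrow C_0([0,+\infty))$ is what guarantees both the existence of $\hat F(0^+)$ and the decay at $+\infty$ used to select the integrating-factor constant, and you rely on both; (ii) the absolute convergence of $\int_\xi^{+\infty}e^{iz\xi'}\hat U(\xi')\,\mathrm d\xi'$ follows from Cauchy--Schwarz since $\hat U\in L^2(\mathbb{R}_+)$ and $e^{-(\mathrm{Im}\,z)\xi'}\in L^2(\mathbb{R}_+)$. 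Your alternative via the Laplace representation $(\mathbf{G}-z)^{-1}=i\int_0^{+\infty}e^{iz\eta}\mathtt{S}(\eta)^*\,\mathrm d\eta$ is also correct and arguably more structural, but as you observe it requires a mild regularization or density step to push $\mathscr{I}$ through the Bochner integral, whereas the ODE route uses only that the resolvent lands in $\mathrm{Dom}(\mathbf{G})^{M\times N}$, where $\hat F(0^+)$ is unambiguous.
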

\begin{proof}
See lemma 4.2 of Sun \cite{SUNInterCMSDNLS}.
\end{proof}
\noindent The transpose transform for matrices, denoted by
\begin{equation}\label{Transpose}
\mathfrak{T}=\mathfrak{T}^{-1}: A \in \mathbb{C}^{M \times N} \mapsto \mathfrak{T}(A) = A^{\mathrm{T}} \in \mathbb{C}^{N \times M},
\end{equation}is a self-adjoint unitary operator on every Sobolev space $H^s(\mathbb{R};  \mathbb{C}^{M\times N})$ and on every filtered Sobolev space $H^s_+(\mathbb{R};  \mathbb{C}^{M\times N}) $, $\forall s \in \mathbb{R}$. According to Sun \cite{SUNMSzego},  the matrix Szeg\H{o} equation on the line \eqref{MSzego} is invariant under transposing, i.e. if  $U \in C^{\infty}(\mathbb{R}; H^s_+(\mathbb{R};  \mathbb{C}^{M\times N}))$ solves \eqref{MSzego}, so does $U^{\mathrm{T}} \in C^{\infty}(\mathbb{R}; H^s_+(\mathbb{R};  \mathbb{C}^{N\times M}))$, $\forall s \geq \frac{1}{2}$. Moreover, $\mathfrak{T}$ in \eqref{Transpose} commutes with the Fourier transform $\mathscr{F}$ and all Fourier multipliers,  
\begin{equation}\label{TF=FT}
\mathfrak{T} \mathscr{F} = \mathscr{F} \mathfrak{T} , \quad \mathfrak{T} \Pi_{\geq 0} =  \Pi_{\geq 0} \mathfrak{T}, \quad \mathfrak{T} \Pi_{< 0} =  \Pi_{< 0} \mathfrak{T}, \quad \mathfrak{T} \partial_x =  \partial_x  \mathfrak{T}, \quad \mathfrak{T} \mathscr{I}_+ =  \mathscr{I}_+  \mathfrak{T}, \quad \mathfrak{T} \mathscr{P} =\mathscr{P} \mathfrak{T} .
\end{equation}The transpose transform $\mathfrak{T}$ also commutes with the Lax--Beurling semigroups $(\mathtt{S}(\eta))_{\eta \geq 0}$ and $(\mathtt{S}(\eta)^*)_{\eta \geq 0}$. Moreover,   $\mathfrak{T}\left(\mathrm{Dom}(\mathbf{G})^{M\times N}\right) = \mathrm{Dom}(\mathbf{G})^{N \times M}$ and $\mathfrak{T}$ commutes with $\mathbf{G}$, i.e.
\begin{equation}\label{TG=GT}
\mathfrak{T} \mathbf{G} = \mathbf{G} \mathfrak{T} , \quad \mathfrak{T} \mathtt{S}(\eta) = \mathtt{S}(\eta)\mathfrak{T} , \quad \mathfrak{T} \mathtt{S}(\eta)^* = \mathtt{S}(\eta)^* \mathfrak{T}.
\end{equation}At last, we recall a theorem of Cauchy.
 \begin{prop}[Cauchy]\label{CauchyThmODE}
Let $\mathcal{E}$ be a Banach space, $\mathcal{I}$ is an open interval of $\mathbb{R}$ and $A \in C^0 (\mathcal{I}; \mathcal{B}(\mathcal{E}))$, if $(t_0, x_0) \in \mathcal{I} \times \mathcal{E}$, there exists a unique function $x \in C^1(\mathcal{I}; \mathcal{B}(\mathcal{E}))$ which solves  
\begin{equation}
 x(t_0)=x_0, \quad  \tfrac{\mathrm{d}}{\mathrm{d}t} x(t) = A(t)\left( x(t)\right), \quad \forall t \in \mathcal{I}.
\end{equation} 
\end{prop}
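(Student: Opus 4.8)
The plan is to convert the initial value problem into the integral equation
\begin{equation*}
x(t) = x_0 + \int_{t_0}^t A(s)\bigl(x(s)\bigr)\,\mathrm{d}s, \qquad t \in \mathcal{I},
\end{equation*}
which, by the fundamental theorem of calculus for $\mathcal{E}$-valued continuous functions, is equivalent to finding $x \in C^1(\mathcal{I}; \mathcal{E})$ solving $\tfrac{\mathrm{d}}{\mathrm{d}t}x = A(t)(x)$ with $x(t_0) = x_0$. First I would fix an arbitrary compact interval $J \subset \mathcal{I}$ containing $t_0$ and set $C_J := \sup_{s \in J}\|A(s)\|_{\mathcal{B}(\mathcal{E})}$, which is finite because $A \in C^0(\mathcal{I}; \mathcal{B}(\mathcal{E}))$. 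On the Banach space $C^0(J; \mathcal{E})$, equipped for a fixed $\lambda > 0$ with the weighted norm $\|y\|_\lambda := \sup_{s \in J} e^{-\lambda |s - t_0|}\|y(s)\|_{\mathcal{E}}$ (which is equivalent to the uniform norm), the affine map $\Phi(y)(t) := x_0 + \int_{t_0}^t A(s)(y(s))\,\mathrm{d}s$ takes $C^0(J;\mathcal{E})$ into itself and satisfies $\|\Phi(y_1) - \Phi(y_2)\|_\lambda \leq \tfrac{C_J}{\lambda}\|y_1 - y_2\|_\lambda$; choosing $\lambda > C_J$ makes $\Phi$ a contraction, so the Banach fixed point theorem yields a unique $x_J \in C^0(J;\mathcal{E})$ with $\Phi(x_J) = x_J$.

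Next I would patch these local pieces together. Writing $\mathcal{I} = \bigcup_n J_n$ as an increasing union of compact intervals each containing $t_0$, the uniqueness clause of the fixed point theorem forces $x_{J_{n+1}}|_{J_n} = x_{J_n}$, so the $x_{J_n}$ define a single $x \in C^0(\mathcal{I};\mathcal{E})$ solving the integral equation on all of $\mathcal{I}$. Since $x$ and $A$ are continuous, $s \mapsto A(s)(x(s))$ is continuous on $\mathcal{I}$, so differentiating the integral equation gives $x \in C^1(\mathcal{I};\mathcal{E})$ with $\tfrac{\mathrm{d}}{\mathrm{d}t}x(t) = A(t)(x(t))$ and $x(t_0) = x_0$; this proves existence.

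For uniqueness, if $x, \tilde x \in C^1(\mathcal{I};\mathcal{E})$ both solve the problem, then $z := x - \tilde x$ satisfies $z(t_0) = 0$ and $\tfrac{\mathrm{d}}{\mathrm{d}t}z = A(t)(z)$, whence $\|z(t)\|_{\mathcal{E}} \leq \bigl| \int_{t_0}^t \|A(s)\|_{\mathcal{B}(\mathcal{E})}\,\|z(s)\|_{\mathcal{E}}\,\mathrm{d}s \bigr|$ for every $t \in \mathcal{I}$; applying Gr\"onwall's inequality to the continuous function $s \mapsto \|A(s)\|_{\mathcal{B}(\mathcal{E})}$ on each compact subinterval containing $t_0$ gives $z \equiv 0$ there, hence $z \equiv 0$ on $\mathcal{I}$, i.e. $x = \tilde x$. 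I do not expect a genuine obstacle here: the one point deserving care is that $\mathcal{I}$ need not be compact, which is exactly why I would work on an exhausting family of compact subintervals and invoke the weighted-norm contraction on each of them, rather than a short-time Picard iteration followed by a continuation argument (which for a general nonlinearity would require an extra a priori bound). Linearity of the equation is what guarantees that the patched solution is defined on the whole of $\mathcal{I}$ with no blow-up. The same argument applies verbatim with $\mathcal{E}$ replaced by the Banach space $\mathcal{B}(\mathcal{E})$, which covers the operator-valued version used later in the paper.
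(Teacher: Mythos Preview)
Your argument is correct and is the standard weighted-norm Picard proof of the linear Cauchy--Lipschitz theorem in a Banach space. The paper does not give its own proof at all: it simply refers the reader to Theorem~1.1.1 of Chemin's lecture notes, so there is nothing to compare beyond noting that your write-up is exactly the kind of proof one would find behind such a citation.
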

\begin{proof}
See Theorem 1.1.1 of Chemin \cite{CheminNoteEDPM2-2016}.
\end{proof}
  
\section{The Lax pair structure}\label{SectLaxPair}
\noindent This section is devoted to proving proposition $\ref{LaxPairThmR}$. 
\subsection{The Hankel operators}
\noindent Given any $d, M,N\in\mathbb{N}_+$,  the Hankel operators of symbol $U \in H^{\frac{1}{2}}_+(\mathbb{R}; \mathbb{C}^{M\times N})$ are given by
\begin{equation}\label{rlHankel}
\begin{split}
& \mathbf{H}^{(\mathbf{r})}_{U} : F \in L^2_+(\mathbb{R}; \mathbb{C}^{d \times N}) \mapsto \mathbf{H}^{(\mathbf{r})}_{U}(F) = \Pi_{\geq 0}(U F^*) \in L^2_+(\mathbb{R}; \mathbb{C}^{M \times d});\\
 & \mathbf{H}^{(\mathbf{l})}_{U} : G \in L^2_+(\mathbb{R}; \mathbb{C}^{M \times d}) \mapsto \mathbf{H}^{(\mathbf{l})}_{U}(G) = \Pi_{\geq 0}(G^* U) \in L^2_+(\mathbb{R}; \mathbb{C}^{d \times N}).\\
\end {split}
\end{equation}If $F=\sum_{j=1}^d \sum_{n=1}^N F_{jn}\mathbb{E}^{(dN)}_{jn} \in  L^2_+(\mathbb{R}; \mathbb{C}^{d \times N})$ and $G=\sum_{m=1}^M \sum_{k=1}^d G_{mk}\mathbb{E}^{(Md)}_{mk} \in  L^2_+(\mathbb{R}; \mathbb{C}^{M \times d})$, then
\begin{equation}\label{relHrHltoH}
\mathbf{H}^{(\mathbf{r})}_{U}(F) = \sum_{k=1}^M  \sum_{j=1}^d (\sum_{n=1}^N H_{U_{kn}}(F_{jn}))\mathbb{E}^{(Md)}_{kj}; \quad \mathbf{H}^{(\mathbf{l})}_{U}(G) = \sum_{k=1}^d  \sum_{j=1}^N (\sum_{m=1}^M H_{U_{mj}}(G_{mk}))\mathbb{E}^{(dN)}_{kj}. 
\end{equation}Both $\mathbf{H}^{(\mathbf{r})}_{U}: L^2_+(\mathbb{R}; \mathbb{C}^{d \times N}) \to L^2_+(\mathbb{R}; \mathbb{C}^{M \times d})$ and $\mathbf{H}^{(\mathbf{l})}_{U}: L^2_+(\mathbb{R}; \mathbb{C}^{M \times d}) \to  L^2_+(\mathbb{R}; \mathbb{C}^{d \times N})$ are $\mathbb{C}$-antilinear Hilbert--Schmidt operators by Lemma 3.5 of Pocovnicu \cite{poAPDE2011}. Moreover,  
\begin{equation}\label{HrHlinnprod}
\langle  \mathbf{H}^{(\mathbf{r})}_{U}(F), G \rangle_{L^2_+(\mathbb{R}; \mathbb{C}^{M \times d})} =  \int_{\mathbb{R}}\mathrm{tr}\left(G(x)^* U(x) F(x)^*  \right)\mathrm{d}x = \langle  \mathbf{H}^{(\mathbf{l})}_{U}(G), F \rangle_{L^2_+(\mathbb{R}; \mathbb{C}^{d \times N})} 
\end{equation}holds for any $F \in L^2_+(\mathbb{R}; \mathbb{C}^{d \times N})$ and $G \in L^2_+(\mathbb{R}; \mathbb{C}^{M \times d})$. Moreover, the right Hankel operators are conjugate to the left Hankel operators via the transpose transform \eqref{Transpose},  
\begin{equation}\label{TconjHrtoHl}
\mathbf{H}^{(\mathbf{l})}_{U} = \mathfrak{T}  \mathbf{H}^{(\mathbf{r})}_{U^{\mathrm{T}}}  \mathfrak{T}, \quad \mathbf{H}^{(\mathbf{r})}_{U} = \mathfrak{T}  \mathbf{H}^{(\mathbf{l})}_{U^{\mathrm{T}}}  \mathfrak{T}.
\end{equation}
 
\noindent If $U \in H^{\frac{1}{2}}_+ \bigcup L^{\infty}(\mathbb{R}; \mathbb{C}^{M \times N})$, $V \in H^{\frac{1}{2}}_+ \bigcup L^{\infty}(\mathbb{R}; \mathbb{C}^{P \times N})$ and $W \in H^{\frac{1}{2}}_+ \bigcup L^{\infty} (\mathbb{R}; \mathbb{C}^{P\times Q})$ for some positive integers $M,N, P, Q\in\mathbb{N}_+$, the matrices $U(x) \in \mathbb{C}^{M \times N}$ and  $V(x) \in \mathbb{C}^{P \times N}$ have the same number of columns,  $\forall x\in \mathbb{R}$, then the $(\mathbf{rl})$-double Hankel operators are defined as
\begin{equation}\label{HrHldef}
\begin{split}
  \mathbf{H}^{(\mathbf{r})}_{U} \mathbf{H}^{(\mathbf{l})}_{V}: G_1 \in  L^2_+(\mathbb{R}; \mathbb{C}^{P \times d}) \mapsto\mathbf{H}^{(\mathbf{r})}_{U} \mathbf{H}^{(\mathbf{l})}_{V}(G_1) = \Pi_{\geq 0}\left(U \left(\Pi_{\geq 0 }(G_1^* V)\right)^*\right)\in L^2_+(\mathbb{R}; \mathbb{C}^{M \times d}); \\
\mathbf{H}^{(\mathbf{r})}_{V} \mathbf{H}^{(\mathbf{l})}_{U}: G_2 \in L^2_+(\mathbb{R}; \mathbb{C}^{M \times d})  \mapsto \mathbf{H}^{(\mathbf{r})}_{V} \mathbf{H}^{(\mathbf{l})}_{U}(G_2) = \Pi_{\geq 0}\left(V \left(\Pi_{\geq 0 }(G_2^* U)\right)^*\right)\in L^2_+(\mathbb{R}; \mathbb{C}^{P \times d}).
\end{split}
\end{equation}for any $d \in \mathbb{N}_+$. The matrices $V(x) \in \mathbb{C}^{P \times N}$ and $W(x) \in \mathbb{C}^{P \times Q}$ have the same number of rows, the $(\mathbf{lr})$-double Hankel operators are given by
\begin{equation}\label{HlHrdef}
\begin{split}
\mathbf{H}^{(\mathbf{l})}_{V} \mathbf{H}^{(\mathbf{r})}_{W} : F_1 \in L^2_+(\mathbb{R}; \mathbb{C}^{d \times Q}) \mapsto \mathbf{H}^{(\mathbf{l})}_{V} \mathbf{H}^{(\mathbf{r})}_{W}(F_1) = \Pi_{\geq 0}\left(\left(\Pi_{\geq 0 }(W F_1^*)\right)^* V\right) \in L^2_+(\mathbb{R}; \mathbb{C}^{d \times N});\\
\mathbf{H}^{(\mathbf{l})}_{W} \mathbf{H}^{(\mathbf{r})}_{V} : F_2 \in L^2_+(\mathbb{R}; \mathbb{C}^{d \times N}) \mapsto \mathbf{H}^{(\mathbf{l})}_{W} \mathbf{H}^{(\mathbf{r})}_{V}(F_2) = \Pi_{\geq 0}\left(\left(\Pi_{\geq 0 }(V F_2^*)\right)^* W\right) \in L^2_+(\mathbb{R}; \mathbb{C}^{d \times Q}).
\end{split}
\end{equation}If $G_1 \in L^2_+(\mathbb{R}; \mathbb{C}^{P \times d}) $, $G_2 \in L^2_+(\mathbb{R}; \mathbb{C}^{M \times d})$, $F_1 \in L^2_+(\mathbb{R}; \mathbb{C}^{d \times Q})$ and $F_2 \in L^2_+(\mathbb{R}; \mathbb{C}^{d \times N})$, we have 
\begin{equation}\label{HrHladj}
\small
\begin{cases}
\langle \mathbf{H}^{(\mathbf{r})}_{U} \mathbf{H}^{(\mathbf{l})}_{V} (G_1), G_2 \rangle_{ L^2_+(\mathbb{R}; \mathbb{C}^{M \times d})} = \langle \mathbf{H}^{(\mathbf{l})}_{U}(G_2) , \mathbf{H}^{(\mathbf{l})}_{V} (G_1) \rangle_{ L^2_+(\mathbb{R}; \mathbb{C}^{d \times N})} = \langle  G_1 , \mathbf{H}^{(\mathbf{r})}_{V} \mathbf{H}^{(\mathbf{l})}_{U}(G_2) \rangle_{ L^2_+(\mathbb{R}; \mathbb{C}^{P \times d})};\\
\langle \mathbf{H}^{(\mathbf{l})}_{V} \mathbf{H}^{(\mathbf{r})}_{W} (F_1), F_2 \rangle_{ L^2_+(\mathbb{R}; \mathbb{C}^{d \times N})} = \langle \mathbf{H}^{(\mathbf{r})}_{V} (F_2) ,  \mathbf{H}^{(\mathbf{r})}_{W} (F_1) \rangle_{ L^2_+(\mathbb{R}; \mathbb{C}^{P \times d})} = \langle  F_1 , \mathbf{H}^{(\mathbf{l})}_{W} \mathbf{H}^{(\mathbf{r})}_{V} (F_2) \rangle_{ L^2_+(\mathbb{R}; \mathbb{C}^{d \times Q})};
\end{cases}
\end{equation}by using formula \eqref{HrHlinnprod}. Indeed, $(\mathbf{rl})$-double Hankel operators are conjugated to $(\mathbf{lr})$-double Hankel operators via the transpose transform \eqref{Transpose}, i.e.
\begin{equation}\label{TconjHH}
\mathbf{H}^{(\mathbf{r})}_{U} \mathbf{H}^{(\mathbf{l})}_{V} = \mathfrak{T} \mathbf{H}^{(\mathbf{l})}_{U^{\mathrm{T}}} \mathbf{H}^{(\mathbf{r})}_{V^{\mathrm{T}}} \mathfrak{T};  \quad \mathbf{H}^{(\mathbf{l})}_{V} \mathbf{H}^{(\mathbf{r})}_{W} = \mathfrak{T} \mathbf{H}^{(\mathbf{l})}_{V^{\mathrm{T}}} \mathbf{H}^{(\mathbf{r})}_{W^{\mathrm{T}}} \mathfrak{T}.
\end{equation}If $M=P$, then $\mathbf{H}^{(\mathbf{r})}_{U} \mathbf{H}^{(\mathbf{l})}_{V}$ is a $\mathbb{C}$-linear bounded operator on $L^2_+(\mathbb{R}; \mathbb{C}^{M \times d})$ and $\mathbf{H}^{(\mathbf{r})}_{V} \mathbf{H}^{(\mathbf{l})}_{U} = \left( \mathbf{H}^{(\mathbf{r})}_{U} \mathbf{H}^{(\mathbf{l})}_{V} \right)^*$. So  $\mathbf{H}^{(\mathbf{r})}_{U} \mathbf{H}^{(\mathbf{l})}_{U}\geq 0$ is a $\mathbb{C}$-linear bounded positive self-adjoint operator on $L^2_+(\mathbb{R}; \mathbb{C}^{M \times d})$. Similarly, if $N=Q$, then $\mathbf{H}^{(\mathbf{l})}_{V} \mathbf{H}^{(\mathbf{r})}_{W}$ is a $\mathbb{C}$-linear bounded operator on $L^2_+(\mathbb{R}; \mathbb{C}^{d \times Q})$ and $\mathbf{H}^{(\mathbf{l})}_{W} \mathbf{H}^{(\mathbf{r})}_{V} = \left( \mathbf{H}^{(\mathbf{l})}_{V} \mathbf{H}^{(\mathbf{r})}_{W} \right)^*$. So $\mathbf{H}^{(\mathbf{l})}_{W} \mathbf{H}^{(\mathbf{r})}_{W}\geq 0$ is a $\mathbb{C}$-linear positive self-adjoint operator on $L^2_+(\mathbb{R}; \mathbb{C}^{d \times Q})$. Then \eqref{TconjHH} yields that $\left(\mathbf{H}^{(\mathbf{r})}_{U} \mathbf{H}^{(\mathbf{l})}_{U} \right)^n = \mathfrak{T} \left(\mathbf{H}^{(\mathbf{l})}_{U^{\mathrm{T}}} \mathbf{H}^{(\mathbf{r})}_{U^{\mathrm{T}}}\right)^n \mathfrak{T} $ and $\left(\mathbf{H}^{(\mathbf{l})}_{U} \mathbf{H}^{(\mathbf{r})}_{U} \right)^n = \mathfrak{T} \left(\mathbf{H}^{(\mathbf{r})}_{U^{\mathrm{T}}} \mathbf{H}^{(\mathbf{l})}_{U^{\mathrm{T}}}\right)^n \mathfrak{T}$, $\forall n \in \mathbb{N}$. Moreover,\begin{equation}\label{TconjexpHH}
\small
e^{it\mathbf{H}^{(\mathbf{r})}_{U} \mathbf{H}^{(\mathbf{l})}_{U}} = \mathfrak{T} e^{it\mathbf{H}^{(\mathbf{l})}_{U^{\mathrm{T}}} \mathbf{H}^{(\mathbf{r})}_{U^{\mathrm{T}}}} \mathfrak{T} \in \mathcal{B}_{\mathbb{C}}\left(L^2_+(\mathbb{R}; \mathbb{C}^{M \times d}) \right); \quad e^{it\mathbf{H}^{(\mathbf{l})}_{U} \mathbf{H}^{(\mathbf{r})}_{U}} = \mathfrak{T} e^{it\mathbf{H}^{(\mathbf{r})}_{U^{\mathrm{T}}} \mathbf{H}^{(\mathbf{l})}_{U^{\mathrm{T}}}} \mathfrak{T} \in \mathcal{B}_{\mathbb{C}}\left(L^2_+(\mathbb{R}; \mathbb{C}^{d \times N}) \right),
\end{equation}for any $t \in \mathbb{R}$. When $U \in H^{\frac{1}{2}}_+ (\mathbb{R}; \mathbb{C}^{M \times N}), V \in H^{\frac{1}{2}}_+ (\mathbb{R}; \mathbb{C}^{M \times N})$, the double Hankel operator
\begin{equation}\label{HrHltracecl}
\mathbf{H}^{(\mathbf{r})}_{V} \mathbf{H}^{(\mathbf{l})}_{U} = \mathfrak{T} \mathbf{H}^{(\mathbf{l})}_{V^{\mathrm{T}}} \mathbf{H}^{(\mathbf{r})}_{U^{\mathrm{T}}} \mathfrak{T} = \left( \mathbf{H}^{(\mathbf{r})}_{U} \mathbf{H}^{(\mathbf{l})}_{V} \right)^* \in \mathcal{B}_{\mathbb{C}}\left(L^2_+(\mathbb{R}; \mathbb{C}^{M \times d}) \right)
\end{equation}is a trace-class operator.\\
 
\noindent The projection operators in \eqref{proMlrrlIntro} can be defined when the symbol $U \in L^2(\mathbb{R}; \mathbb{C}^{M \times N})$,
\begin{equation}\label{proMlrrl}
\begin{split}
&\mathfrak{m}_U^{(\mathbf{rl})} : G \in L^2(\mathbb{R}; \mathbb{C}^{M \times d}) \mapsto \mathfrak{m}_U^{(\mathbf{rl})}(G) := U \widehat{U^*G}(0) \in  L^2(\mathbb{R}; \mathbb{C}^{M \times d});\\
&\mathfrak{m}_U^{(\mathbf{lr})}: F \in L^2(\mathbb{R}; \mathbb{C}^{d \times N}) \mapsto \mathfrak{m}_U^{(\mathbf{lr})}(F) := \widehat{F U^*}(0)U \in  L^2(\mathbb{R}; \mathbb{C}^{d \times N});\\
\end{split}
\end{equation}Then we have $\max\{\|\mathfrak{m}_U^{(\mathbf{rl})}\|_{\mathcal{B}(L^2(\mathbb{R}; \mathbb{C}^{M \times d}))}, \|\mathfrak{m}_U^{(\mathbf{lr})}\|_{\mathcal{B}(L^2(\mathbb{R}; \mathbb{C}^{d \times N}))}\}\leq \|U\|_{L^2(\mathbb{R}; \mathbb{C}^{M \times N})}^2$ and
\begin{equation}\label{Tconjm(lr)m(rl)}
\mathfrak{m}^{(\mathbf{rl})}_U = \mathfrak{T}\mathfrak{m}_{U^\mathrm{T}}^{(\mathbf{lr})} \mathfrak{T}, \quad \mathfrak{m}^{(\mathbf{lr})}_U = \mathfrak{T}\mathfrak{m}_{U^\mathrm{T}}^{(\mathbf{rl})} \mathfrak{T}.
\end{equation}Recall the definition in \eqref{LlrLrlIntro}, if $U \in H^{\frac{1}{2}}_+(\mathbb{R}; \mathbb{C}^{M \times N})$ and $ (F, G) \in L^2_+(\mathbb{R}; \mathbb{C}^{d \times N}) \times L^2_+(\mathbb{R}; \mathbb{C}^{M \times d})$, 
\begin{equation}\label{LlrLrl}
\begin{split}
&\mathscr{L}_U^{(\mathbf{rl})}(t) (G) = \frac{1}{2 \pi}\int_0^t e^{-i \tau \mathbf{H}^{(\mathbf{r})}_{U} \mathbf{H}^{(\mathbf{l})}_{U}} \mathfrak{m}_U^{(\mathbf{rl})} e^{i \tau \mathbf{H}^{(\mathbf{r})}_{U} \mathbf{H}^{(\mathbf{l})}_{U}}(G)\mathrm{d}\tau  \in  L^2_+(\mathbb{R}; \mathbb{C}^{M \times d}),\\
&\mathscr{L}_U^{(\mathbf{lr})}(t)(F) = \frac{1}{2 \pi}\int_0^t e^{-i \tau \mathbf{H}^{(\mathbf{l})}_{U} \mathbf{H}^{(\mathbf{r})}_{U}} \mathfrak{m}_U^{(\mathbf{lr})} e^{i \tau \mathbf{H}^{(\mathbf{l})}_{U} \mathbf{H}^{(\mathbf{r})}_{U}}(F)\mathrm{d}\tau  \in  L^2_+(\mathbb{R}; \mathbb{C}^{d \times N}),
\end{split}
\end{equation}are interpreted as the vector-valued integrations in definition 3.26 of Rudin \cite{RudinFA}. Theorem 3.29 of  Rudin \cite{RudinFA} yields that $\max\{\|\mathscr{L}_U^{(\mathbf{rl})}(t)\|_{\mathcal{B}(L^2(\mathbb{R}; \mathbb{C}^{M \times d}))}, \|\mathscr{L}_U^{(\mathbf{lr})}(t)\|_{\mathcal{B}(L^2(\mathbb{R}; \mathbb{C}^{d \times N}))} \}\leq \tfrac{|t|}{2 \pi}\|U\|_{L^2(\mathbb{R}; \mathbb{C}^{M \times N})}^2$. We have
\begin{equation}\label{TconjL(lr)L(rl)}
\mathscr{L}^{(\mathbf{rl})}_U (t) = \mathfrak{T}\mathscr{L}_{U^\mathrm{T}}^{(\mathbf{lr})} (t)  \mathfrak{T} \in \mathcal{B}_{\mathbb{C}}(L^2_+(\mathbb{R}; \mathbb{C}^{M \times d})), \quad \mathscr{L}^{(\mathbf{lr})}_U (t)  = \mathfrak{T}\mathscr{L}_{U^\mathrm{T}}^{(\mathbf{rl})} (t) \mathfrak{T} \in \mathcal{B}_{\mathbb{C}}(L^2_+(\mathbb{R}; \mathbb{C}^{d \times N})),
\end{equation}thanks to \eqref{TconjexpHH} and \eqref{Tconjm(lr)m(rl)}. For any $z \in \mathbb{C}_+$, the operators $ \mathbf{G} + \mathscr{L}^{(\mathbf{rl})}_U (t) -z  \in \mathcal{B}_{\mathbb{C}}(L^2_+(\mathbb{R}; \mathbb{C}^{M \times d}))$ and $ \mathbf{G} + \mathscr{L}^{(\mathbf{lr})}_U (t) -z  \in \mathcal{B}_{\mathbb{C}}(L^2_+(\mathbb{R}; \mathbb{C}^{d \times N}))$ are both invertible and we have
\begin{equation}\label{TconjinvG+Lrl-z}
\small
\left(\mathbf{G} + \mathscr{L}^{(\mathbf{rl})}_U (t) -z \right)^{-1} = \mathfrak{T}\left(\mathbf{G} + \mathscr{L}^{(\mathbf{lr})}_{U^{\mathrm{T}}} (t) -z \right)^{-1} \mathfrak{T}, \quad \left(\mathbf{G} + \mathscr{L}^{(\mathbf{lr})}_U (t) -z \right)^{-1} = \mathfrak{T}\left(\mathbf{G} + \mathscr{L}^{(\mathbf{rl})}_{U^{\mathrm{T}}} (t) -z \right)^{-1} \mathfrak{T}.
\end{equation}

\subsection{The Toeplitz operators}
\noindent Recall the definition of Toeplitz operators in \eqref{rlToepIntro}, given $d, M,N\in\mathbb{N}_+$ and $V \in L^2(\mathbb{R}; \mathbb{C}^{M \times N})$,
\begin{equation}\label{rlToep}
\begin{split}
& \mathbf{T}^{(\mathbf{r})}_{V} : G \in H^1_+(\mathbb{R}; \mathbb{C}^{N \times d}) \mapsto \mathbf{T}^{(\mathbf{r})}_{V}(G) = \Pi_{\geq 0}(V G) \in L^2_+(\mathbb{R}; \mathbb{C}^{M \times d}),\\
& \mathbf{T}^{(\mathbf{l})}_{V} : F \in H^1_+(\mathbb{R}; \mathbb{C}^{d \times M}) \mapsto \mathbf{T}^{(\mathbf{l})}_{V}(F) = \Pi_{\geq 0}(FV) \in L^2_+(\mathbb{R}; \mathbb{C}^{d \times N}).\\
\end {split}
\end{equation}If $F=\sum_{j=1}^d \sum_{m=1}^M F_{jm}\mathbb{E}^{(dM)}_{jM} \in  L^2_+(\mathbb{R}; \mathbb{C}^{d \times N})$ and $G=\sum_{n=1}^N \sum_{k=1}^d G_{nk}\mathbb{E}^{(Nd)}_{nk} \in  L^2_+(\mathbb{R}; \mathbb{C}^{N \times d})$, then
\begin{equation}\label{relTrTltoT}
\mathbf{T}^{(\mathbf{r})}_{V}(G) = \sum_{k=1}^M  \sum_{j=1}^d (\sum_{n=1}^N T_{V_{kn}}(G_{nj}))\mathbb{E}^{(Md)}_{kj}; \quad \mathbf{T}^{(\mathbf{l})}_{V}(F) = \sum_{k=1}^d  \sum_{j=1}^N (\sum_{m=1}^M T_{V_{mj}}(F_{km}))\mathbb{E}^{(dN)}_{kj}. 
\end{equation}If $V \in L^{\infty}(\mathbb{R}; \mathbb{C}^{M \times N})$, then $\mathbf{T}^{(\mathbf{r})}_{V}:  L^2_+(\mathbb{R}; \mathbb{C}^{N \times d}) \to L^2_+(\mathbb{R}; \mathbb{C}^{M \times d})$ and $\mathbf{T}^{(\mathbf{l})}_{V}: L^2_+(\mathbb{R}; \mathbb{C}^{d \times M}) \to L^2_+(\mathbb{R}; \mathbb{C}^{d \times N})$ are both bounded. If $ G \in L^2_+(\mathbb{R}; \mathbb{C}^{N \times d})$,   $A \in L^2_+(\mathbb{R}; \mathbb{C}^{M \times d})$, $F \in L^2_+(\mathbb{R}; \mathbb{C}^{d \times M})$, $B \in L^2_+(\mathbb{R}; \mathbb{C}^{d \times N})$,  
\begin{equation}\label{<TrG,A>}
\small
\langle \mathbf{T}^{(\mathbf{r})}_{V}(G), A\rangle_{L^2_+(\mathbb{R}; \mathbb{C}^{M \times d})} = \langle G, \mathbf{T}^{(\mathbf{r})}_{V^*}(A)\rangle_{L^2_+(\mathbb{R}; \mathbb{C}^{N \times d})}; \quad \langle \mathbf{T}^{(\mathbf{l})}_{V}(F), B\rangle_{L^2_+(\mathbb{R}; \mathbb{C}^{d \times N})} = \langle F, \mathbf{T}^{(\mathbf{l})}_{V^*}(B)\rangle_{L^2_+(\mathbb{R}; \mathbb{C}^{d \times M})}. 
\end{equation}Set $M=N$. If $V \in L^{\infty}(\mathbb{R}; \mathbb{C}^{N \times N})$, then \eqref{<TrG,A>} implies that $\mathbf{T}^{(\mathbf{r})}_{V^*} = (\mathbf{T}^{(\mathbf{r})}_{V})^* \in \mathcal{B}(L^2_+(\mathbb{R}; \mathbb{C}^{N \times d}))$ and $\mathbf{T}^{(\mathbf{l})}_{V^*} = (\mathbf{T}^{(\mathbf{l})}_{V})^* \in \mathcal{B}(L^2_+(\mathbb{R}; \mathbb{C}^{d \times N}))$. The right Toeplitz operators are conjugate to the left Toeplitz operators via the transpose transform \eqref{Transpose},
\begin{equation}\label{TconjTrtoTl}
\mathbf{T}^{(\mathbf{r})}_{V} = \mathfrak{T}  \mathbf{T}^{(\mathbf{l})}_{V^{\mathrm{T}}}  \mathfrak{T}, \quad \mathbf{T}^{(\mathbf{l})}_{V} = \mathfrak{T}  \mathbf{T}^{(\mathbf{r})}_{V^{\mathrm{T}}}  \mathfrak{T}.
\end{equation}
\begin{lem}
Given $M,N,d \in \mathbb{N}_+$, if $V \in L^2_-(\mathbb{R}; \mathbb{C}^{M \times N})$, $\forall (A,B)\in \mathbb{C}^{d \times M} \times \mathbb{C}^{N \times d}$,  we have
\begin{equation}\label{TVchiAto0}
\lim_{\epsilon \to 0^+} \left(\|\mathbf{T}^{(\mathbf{l})}_{V}  ( \chi_{\epsilon} A)\|_{L^2_+(\mathbb{R}; \mathbb{C}^{d \times N})} + \|\mathbf{T}^{(\mathbf{r})}_{V}  (\chi_{\epsilon}B)\|_{L^2_+(\mathbb{R}; \mathbb{C}^{M \times d})}  \right) =0.
\end{equation}
\end{lem}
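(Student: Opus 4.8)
The plan is to reduce both summands to the single elementary fact that multiplication by $\chi_\epsilon$ followed by $\Pi_{\geq 0}$ annihilates $L^2_-$-functions in the limit $\epsilon\to 0^+$. First I would record that, since $A\in\mathbb{C}^{d\times M}$ and $B\in\mathbb{C}^{N\times d}$ are constant matrices and $V\in L^2_-(\mathbb{R};\mathbb{C}^{M\times N})$, the matrix multiplication rules \eqref{matrixXL^2} give $AV=\Pi_{<0}(AV)\in L^2_-(\mathbb{R};\mathbb{C}^{d\times N})$ and $VB=\Pi_{<0}(VB)\in L^2_-(\mathbb{R};\mathbb{C}^{M\times d})$, in particular $\Pi_{\geq 0}(AV)=0$ and $\Pi_{\geq 0}(VB)=0$. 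Because $\chi_\epsilon$ is scalar-valued and bounded, Hölder's inequality \eqref{Hoelder} shows $\chi_\epsilon AV$ and $\chi_\epsilon VB$ lie in $L^2$, and by the definition \eqref{rlToep} of the Toeplitz operators (using that $\chi_\epsilon$ commutes with the matrix factors) one has $\mathbf{T}^{(\mathbf{l})}_V(\chi_\epsilon A)=\Pi_{\geq 0}(\chi_\epsilon AV)$ and $\mathbf{T}^{(\mathbf{r})}_V(\chi_\epsilon B)=\Pi_{\geq 0}(\chi_\epsilon VB)$.

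Next, using $\Pi_{\geq 0}(AV)=0=\Pi_{\geq 0}(VB)$ and linearity of $\Pi_{\geq 0}$, I would rewrite $\mathbf{T}^{(\mathbf{l})}_V(\chi_\epsilon A)=\Pi_{\geq 0}\big((\chi_\epsilon-1)AV\big)$ and $\mathbf{T}^{(\mathbf{r})}_V(\chi_\epsilon B)=\Pi_{\geq 0}\big((\chi_\epsilon-1)VB\big)$. From $\chi_\epsilon(x)-1=i\epsilon x(1-i\epsilon x)^{-1}$ one gets the pointwise estimate $|\chi_\epsilon(x)-1|=\epsilon|x|(1+\epsilon^2x^2)^{-1/2}\leq 1$ for all $x\in\mathbb{R}$ and $0<\epsilon<1$, together with $\chi_\epsilon(x)-1\to 0$ as $\epsilon\to 0^+$ for each fixed $x$. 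Consequently $(\chi_\epsilon-1)AV\to 0$ almost everywhere and is dominated entrywise by the entries of $AV$, which lie in $L^2$, so Lebesgue's dominated convergence theorem gives $\|(\chi_\epsilon-1)AV\|_{L^2(\mathbb{R};\mathbb{C}^{d\times N})}\to 0$, and likewise $\|(\chi_\epsilon-1)VB\|_{L^2(\mathbb{R};\mathbb{C}^{M\times d})}\to 0$.

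Finally, since $\Pi_{\geq 0}$ is an orthogonal projection on $L^2$, hence a contraction, I conclude $\|\mathbf{T}^{(\mathbf{l})}_V(\chi_\epsilon A)\|_{L^2_+(\mathbb{R};\mathbb{C}^{d\times N})}\leq\|(\chi_\epsilon-1)AV\|_{L^2(\mathbb{R};\mathbb{C}^{d\times N})}$ and $\|\mathbf{T}^{(\mathbf{r})}_V(\chi_\epsilon B)\|_{L^2_+(\mathbb{R};\mathbb{C}^{M\times d})}\leq\|(\chi_\epsilon-1)VB\|_{L^2(\mathbb{R};\mathbb{C}^{M\times d})}$, and summing the two and letting $\epsilon\to 0^+$ yields \eqref{TVchiAto0}. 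Alternatively, the $\mathbf{T}^{(\mathbf{r})}$ bound can be deduced from the $\mathbf{T}^{(\mathbf{l})}$ bound applied to $V^{\mathrm{T}}\in L^2_-$ via the conjugation identity \eqref{TconjTrtoTl} and the fact that $\mathfrak{T}$ is an $L^2$-isometry commuting with multiplication by the scalar $\chi_\epsilon$, so only one of the two estimates needs to be proved. The argument is routine; the only subtlety worth flagging is that the convergence to $0$ must be extracted from $\chi_\epsilon-1\to 0$ pointwise, since the bare inequality $|\chi_\epsilon-1|\leq 1$ only yields a uniform bound on $\Pi_{\geq 0}(\chi_\epsilon AV)$ and $\Pi_{\geq 0}(\chi_\epsilon VB)$, not their vanishing.
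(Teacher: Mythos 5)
Your argument is correct and follows essentially the same route as the paper: both reduce the claim to the fact that $\chi_\epsilon\,AV\to AV$ in $L^2$ with $AV\in L^2_-$, then apply the contraction $\Pi_{\geq 0}$, and both invoke (or offer, in your case, as an alternative) the transpose conjugation $\mathbf{T}^{(\mathbf{r})}_V=\mathfrak{T}\,\mathbf{T}^{(\mathbf{l})}_{V^{\mathrm{T}}}\,\mathfrak{T}$ to handle the second term. The only cosmetic difference is that the paper cites the pre-established convergence \eqref{CVdominesca} directly, whereas you re-derive it by dominated convergence via the identity $\chi_\epsilon-1=i\epsilon x\,\chi_\epsilon$ and the uniform bound $|\chi_\epsilon-1|\leq 1$.
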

\begin{proof}
Since $AV \in L^2_-(\mathbb{R}; \mathbb{C}^{d \times N})$,  \eqref{CVdominesca} yields that $\mathbf{T}^{(\mathbf{l})}_{V}  ( \chi_{\epsilon} A) \to \Pi_{\geq 0}(AV)=0_{d \times N}$ in $L^2_+(\mathbb{R}; \mathbb{C}^{d \times N})$, as $\epsilon \to 0^+$. The second limit holds thanks to $\|\mathbf{T}^{(\mathbf{r})}_{V}  (\chi_{\epsilon}B)\|_{L^2_+(\mathbb{R}; \mathbb{C}^{M \times d})} = \|\mathbf{T}^{(\mathbf{l})}_{V^{\mathrm{T}}}  (\chi_{\epsilon}B^{\mathrm{T}})\|_{L^2_+(\mathbb{R}; \mathbb{C}^{d \times M})}$.
\end{proof}

\noindent If $U \in  L^{\infty}(\mathbb{R}; \mathbb{C}^{M \times N})$, $V \in  L^{\infty}(\mathbb{R}; \mathbb{C}^{P \times N})$ and $W \in L^{\infty} (\mathbb{R}; \mathbb{C}^{P\times Q})$ for some   $M,N, P, Q\in\mathbb{N}_+$, the matrices $U(x) \in \mathbb{C}^{M \times N}$ and  $V(x) \in \mathbb{C}^{P \times N}$ have the same number of columns,  $\forall x\in \mathbb{R}$, then the $(\mathbf{rr})$-double Toeplitz operators are defined as
\begin{equation}\label{TrTrdef}
\begin{split}
  \mathbf{T}^{(\mathbf{r})}_{U} \mathbf{T}^{(\mathbf{r})}_{V^*}: G_1 \in  L^2_+(\mathbb{R}; \mathbb{C}^{P \times d}) \mapsto   \mathbf{T}^{(\mathbf{r})}_{U} \mathbf{T}^{(\mathbf{r})}_{V^*}(G_1) = \Pi_{\geq 0}\left(U \left(\Pi_{\geq 0 }(V^* G_1 )\right) \right)\in L^2_+(\mathbb{R}; \mathbb{C}^{M \times d}); \\
  \mathbf{T}^{(\mathbf{r})}_{V} \mathbf{T}^{(\mathbf{r})}_{U^*}: G_2 \in  L^2_+(\mathbb{R}; \mathbb{C}^{M \times d}) \mapsto   \mathbf{T}^{(\mathbf{r})}_{V} \mathbf{T}^{(\mathbf{r})}_{U^*}(G_2) = \Pi_{\geq 0}\left(V \left(\Pi_{\geq 0 }(U^* G_2)\right) \right)\in L^2_+(\mathbb{R}; \mathbb{C}^{P \times d}),
\end{split}
\end{equation}for any $d \in \mathbb{N}_+$. The matrices $V(x) \in \mathbb{C}^{P \times N}$ and $W(x) \in \mathbb{C}^{P \times Q}$ have the same number of rows, the $(\mathbf{ll})$-double Toeplitz operators are given by
\begin{equation}\label{TlTldef}
\begin{split}
&\mathbf{T}^{(\mathbf{l})}_{V} \mathbf{T}^{(\mathbf{l})}_{W^*} : F_1 \in L^2_+(\mathbb{R}; \mathbb{C}^{d \times Q}) \mapsto \mathbf{T}^{(\mathbf{l})}_{V} \mathbf{T}^{(\mathbf{l})}_{W^*}(F_1) = \Pi_{\geq 0}\left(\left(\Pi_{\geq 0 }(F_1 W^* )\right) V \right) \in L^2_+(\mathbb{R}; \mathbb{C}^{d \times N});\\
&\mathbf{T}^{(\mathbf{l})}_{W} \mathbf{T}^{(\mathbf{l})}_{V^*} : F_2 \in L^2_+(\mathbb{R}; \mathbb{C}^{d \times N}) \mapsto \mathbf{T}^{(\mathbf{l})}_{W} \mathbf{T}^{(\mathbf{l})}_{V^*}(F_2) = \Pi_{\geq 0}\left(\left(\Pi_{\geq 0 }(F_2 V^* )\right) W \right) \in L^2_+(\mathbb{R}; \mathbb{C}^{d \times Q}),
\end{split}
\end{equation}If $G_1 \in L^2_+(\mathbb{R}; \mathbb{C}^{P \times d}) $, $G_2 \in L^2_+(\mathbb{R}; \mathbb{C}^{M \times d})$, $F_1 \in L^2_+(\mathbb{R}; \mathbb{C}^{d \times Q})$ and $F_2 \in L^2_+(\mathbb{R}; \mathbb{C}^{d \times N})$, we have 
\begin{equation}\label{TTadj}
\small
\begin{cases}
\langle \mathbf{T}^{(\mathbf{r})}_{U} \mathbf{T}^{(\mathbf{r})}_{V^*} (G_1), G_2 \rangle_{ L^2_+(\mathbb{R}; \mathbb{C}^{M \times d})} = \langle \mathbf{T}^{(\mathbf{r})}_{V^*}(G_1) , \mathbf{T}^{(\mathbf{r})}_{U^*} (G_2) \rangle_{ L^2_+(\mathbb{R}; \mathbb{C}^{N \times d})} = \langle  G_1 , \mathbf{T}^{(\mathbf{r})}_{V} \mathbf{T}^{(\mathbf{r})}_{U^*} (G_2) \rangle_{ L^2_+(\mathbb{R}; \mathbb{C}^{P \times d})};\\
\langle \mathbf{T}^{(\mathbf{l})}_{V} \mathbf{T}^{(\mathbf{l})}_{W^*} (F_1), F_2 \rangle_{ L^2_+(\mathbb{R}; \mathbb{C}^{d \times N})} = \langle \mathbf{T}^{(\mathbf{l})}_{W^*}(F_1) , \mathbf{T}^{(\mathbf{l})}_{V^*} (F_2) \rangle_{ L^2_+(\mathbb{R}; \mathbb{C}^{d \times P})} = \langle  F_1 , \mathbf{T}^{(\mathbf{l})}_{V} \mathbf{T}^{(\mathbf{l})}_{W^*} (F_2) \rangle_{ L^2_+(\mathbb{R}; \mathbb{C}^{d \times Q})},\\
\end{cases}
\end{equation}thanks to formula \eqref{<TrG,A>}. Indeed, $(\mathbf{rr})$-double Toeplitz operators are conjugated to $(\mathbf{ll})$-double Hankel operators via the transpose transform \eqref{Transpose},  
\begin{equation}\label{TconjTT}
\mathbf{T}^{(\mathbf{r})}_{U} \mathbf{T}^{(\mathbf{r})}_{V^*} = \mathfrak{T} \mathbf{T}^{(\mathbf{l})}_{U^{\mathrm{T}}} \mathbf{T}^{(\mathbf{l})}_{\overline{V}} \mathfrak{T};  \quad \mathbf{T}^{(\mathbf{l})}_{V} \mathbf{T}^{(\mathbf{l})}_{W^*} = \mathfrak{T} \mathbf{T}^{(\mathbf{r})}_{V^{\mathrm{T}}} \mathbf{T}^{(\mathbf{r})}_{\overline{W}} \mathfrak{T}.
\end{equation}If $M=P$, then $\mathbf{T}^{(\mathbf{r})}_{U} \mathbf{T}^{(\mathbf{r})}_{V^*}$ is a $\mathbb{C}$-linear bounded operator on $L^2_+(\mathbb{R}; \mathbb{C}^{M \times d})$ and $\mathbf{T}^{(\mathbf{r})}_{V} \mathbf{T}^{(\mathbf{r})}_{U^*} = \left( \mathbf{T}^{(\mathbf{r})}_{U} \mathbf{T}^{(\mathbf{r})}_{V^*} \right)^*$. So  $\mathbf{T}^{(\mathbf{r})}_{U} \mathbf{T}^{(\mathbf{r})}_{U^*}\geq 0$ is a $\mathbb{C}$-linear bounded positive self-adjoint operator on $L^2_+(\mathbb{R}; \mathbb{C}^{M \times d})$. Similarly, if $N=Q$, then $\mathbf{T}^{(\mathbf{l})}_{V} \mathbf{T}^{(\mathbf{l})}_{W^*}$ is a $\mathbb{C}$-linear bounded operator on $L^2_+(\mathbb{R}; \mathbb{C}^{d \times N})$ and $\mathbf{T}^{(\mathbf{l})}_{W} \mathbf{T}^{(\mathbf{l})}_{V^*} = \left( \mathbf{T}^{(\mathbf{l})}_{V} \mathbf{T}^{(\mathbf{l})}_{W^*} \right)^*$. So $\mathbf{T}^{(\mathbf{l})}_{V} \mathbf{T}^{(\mathbf{l})}_{V^*}\geq 0$ is a $\mathbb{C}$-linear bounded positive self-adjoint operator on $L^2_+(\mathbb{R}; \mathbb{C}^{d \times N})$. Every double Hankel operator can be expressed in terms of the Toeplitz operators thanks to the next lemma.
\begin{lem}\label{2H2Tlemma}
Given  $M,N, P, Q\in\mathbb{N}_+$,  if $U \in  H^{\frac{1}{2}}_+ \bigcup L^{\infty} (\mathbb{R}; \mathbb{C}^{M \times N})$, $V \in  H^{\frac{1}{2}}_+ \bigcup L^{\infty} (\mathbb{R}; \mathbb{C}^{P \times N})$ and $W \in  H^{\frac{1}{2}}_+ \bigcup L^{\infty} (\mathbb{R}; \mathbb{C}^{P\times Q})$, then we have
\begin{equation}\label{rel2HT-TTfor}
\mathbf{H}^{(\mathbf{r})}_{U} \mathbf{H}^{(\mathbf{l})}_{V}  = \mathbf{T}^{(\mathbf{r})}_{U V^*}  - \mathbf{T}^{(\mathbf{r})}_{U}\mathbf{T}^{(\mathbf{r})}_{V^*}, \quad \mathbf{H}^{(\mathbf{l})}_{V} \mathbf{H}^{(\mathbf{r})}_{W} = \mathbf{T}^{(\mathbf{l})}_{W^* V}  - \mathbf{T}^{(\mathbf{l})}_{V}\mathbf{T}^{(\mathbf{l})}_{W^*}.
\end{equation}
\end{lem}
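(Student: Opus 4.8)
The plan is to verify the two identities in \eqref{rel2HT-TTfor} by a direct computation on a dense set of inputs, reducing everything to the definitions of the operators together with the orthogonal decomposition $\mathrm{id} = \Pi_{\geq 0} + \Pi_{<0}$ and the fact that $\Pi_{<0} = \mathrm{id} - \Pi_{\geq 0}$. Since both sides of each identity are bounded operators (when the symbols are in $L^\infty$, or densely defined and bounded when in $H^{1/2}_+$), it suffices to check the identity on a dense subspace, for instance on inputs of the form $\chi_\epsilon A$ with $A$ a constant matrix, or more simply on arbitrary $G_1 \in L^2_+(\mathbb{R}; \mathbb{C}^{P\times d})$ for the first identity and $F_1 \in L^2_+(\mathbb{R}; \mathbb{C}^{d\times Q})$ for the second. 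I would begin by treating the first identity; the second then follows by an entirely parallel argument, or, even more economically, by applying the transpose transform $\mathfrak{T}$ and invoking \eqref{TconjHH}, \eqref{TconjTT} together with the already-established first identity.

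First I would unwind the left-hand side of the first identity: for $G_1 \in L^2_+(\mathbb{R};\mathbb{C}^{P\times d})$, by \eqref{HrHldef} we have $\mathbf{H}^{(\mathbf{r})}_{U} \mathbf{H}^{(\mathbf{l})}_{V}(G_1) = \Pi_{\geq 0}\bigl(U (\Pi_{\geq 0}(G_1^* V))^*\bigr)$. Now write $\Pi_{\geq 0}(G_1^* V) = G_1^* V - \Pi_{<0}(G_1^* V)$, so that $(\Pi_{\geq 0}(G_1^* V))^* = V^* G_1 - (\Pi_{<0}(G_1^* V))^*$. Applying $U \cdot$ and then $\Pi_{\geq 0}$, and using the $\mathbb{C}$-linearity of $\Pi_{\geq 0}$, gives $\mathbf{H}^{(\mathbf{r})}_{U} \mathbf{H}^{(\mathbf{l})}_{V}(G_1) = \Pi_{\geq 0}(U V^* G_1) - \Pi_{\geq 0}\bigl(U (\Pi_{<0}(G_1^* V))^*\bigr)$. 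The first term is exactly $\mathbf{T}^{(\mathbf{r})}_{UV^*}(G_1)$ by \eqref{rlToep}. For the second term I claim $(\Pi_{<0}(G_1^* V))^* = \Pi_{\geq 0}(V^* G_1) \cdot (\text{wait})$ — more carefully, using \eqref{Pi<0FR} in the form $(\Pi_{<0} W)^* = \Pi_{\geq 0}(W^*)$, we get $(\Pi_{<0}(G_1^* V))^* = \Pi_{\geq 0}((G_1^* V)^*) = \Pi_{\geq 0}(V^* G_1)$, hence the second term equals $\Pi_{\geq 0}\bigl(U\, \Pi_{\geq 0}(V^* G_1)\bigr) = \mathbf{T}^{(\mathbf{r})}_{U}\mathbf{T}^{(\mathbf{r})}_{V^*}(G_1)$ by \eqref{rlToep} and \eqref{TrTrdef}. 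This establishes the first identity on all of $L^2_+(\mathbb{R};\mathbb{C}^{P\times d})$.

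For the second identity I would proceed symmetrically: for $F_1 \in L^2_+(\mathbb{R};\mathbb{C}^{d\times Q})$, \eqref{HlHrdef} gives $\mathbf{H}^{(\mathbf{l})}_{V} \mathbf{H}^{(\mathbf{r})}_{W}(F_1) = \Pi_{\geq 0}\bigl((\Pi_{\geq 0}(W F_1^*))^* V\bigr)$; substituting $\Pi_{\geq 0}(W F_1^*) = W F_1^* - \Pi_{<0}(W F_1^*)$, taking adjoints, using $(\Pi_{<0}(W F_1^*))^* = \Pi_{\geq 0}(F_1 W^*)$, and splitting by linearity of $\Pi_{\geq 0}$ yields $\Pi_{\geq 0}(F_1 W^* V) - \Pi_{\geq 0}\bigl(\Pi_{\geq 0}(F_1 W^*)\, V\bigr) = \mathbf{T}^{(\mathbf{l})}_{W^* V}(F_1) - \mathbf{T}^{(\mathbf{l})}_{V}\mathbf{T}^{(\mathbf{l})}_{W^*}(F_1)$, which is the claim. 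Alternatively, apply $\mathfrak{T}$ to the first identity with $U,V,W$ replaced by $W^{\mathrm{T}}, V^{\mathrm{T}}$ suitably, using \eqref{TconjHH} and \eqref{TconjTT} to transport the relation. I do not expect a genuine obstacle here; the only points requiring care are the bookkeeping of matrix sizes (ensuring products like $UV^*$, $W^*V$ are well-defined, which is guaranteed by the stated column/row-matching hypotheses) and the repeated use of the identity $(\Pi_{<0} W)^* = \Pi_{\geq 0}(W^*)$ from \eqref{Pi<0FR}, together with the linearity of $\Pi_{\geq 0}$ and the elementary fact $\Pi_{\geq 0}\Pi_{\geq 0} = \Pi_{\geq 0}$ which lets the nested projections collapse appropriately.
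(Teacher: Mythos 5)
Your argument is correct and is essentially the paper's proof: both split $\mathrm{id} = \Pi_{\geq 0} + \Pi_{<0}$, apply the adjoint relation $(\Pi_{<0}W)^* = \Pi_{\geq 0}(W^*)$ from \eqref{Pi<0FR}, arrive at $\mathbf{T}^{(\mathbf{r})}_{UV^*} - \mathbf{T}^{(\mathbf{r})}_{U}\mathbf{T}^{(\mathbf{r})}_{V^*}$ by linearity of $\Pi_{\geq 0}$, and the paper obtains the second identity via the transpose conjugations \eqref{TconjHH}, \eqref{TconjTrtoTl}, \eqref{TconjTT}, exactly the alternative you offer alongside your direct calculation. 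The one small correction: the paper carries out the computation for $G\in H^1_+(\mathbb{R};\mathbb{C}^{P\times d})$ rather than for arbitrary $G_1\in L^2_+$, because when $V$ lies merely in $H^{1/2}_+$ (not $L^\infty$) the pointwise product $V^*G_1$ need not be in $L^2$ for general $G_1\in L^2_+$; so the dense-subspace reduction you flag at the outset is not optional, and your closing claim of validity ``on all of $L^2_+$'' should be tempered to ``on a dense subspace such as $H^1_+$, then extended by boundedness.''
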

\begin{proof}
If $G  \in H^1_+(\mathbb{R}; \mathbb{C}^{P \times d})$, $F \in  H^1_+(\mathbb{T}; \mathbb{C}^{d \times Q})$ for some $d \in \mathbb{N}_+$, formula \eqref{Pi<0FR} yields that
\begin{equation*}
\begin{split}
 \mathbf{H}^{(\mathbf{r})}_{U} \mathbf{H}^{(\mathbf{l})}_{V} (G) = \Pi_{\geq 0} \left(U \Pi_{<0}(V^* G) \right) = \Pi_{\geq 0} \left(UV^* G- U \Pi_{\geq 0}(V^* G) \right)  = (\mathbf{T}^{(\mathbf{r})}_{U V^*}  - \mathbf{T}^{(\mathbf{r})}_{U}\mathbf{T}^{(\mathbf{r})}_{V^*}) (G).
\end{split}
\end{equation*}The second formula can be deduced from the first one and the conjugate relations \eqref{TconjHH}, \eqref{TconjTrtoTl}, \eqref{TconjTT}.
\end{proof}
 
\noindent Inspired from Sun \cite{SUNInterCMSDNLS}, we recall the commutator formula between the infinitesimal generator $\mathbf{G}$ and the Toeplitz operator in the next lemma.
\begin{lem}\label{Lemm[T,G]}
Given $M,N \in \mathbb{N}_+$, if $V \in L^2 \bigcap L^{\infty}(\mathbb{R}; \mathbb{C}^{M \times N})$,  then $\mathbf{T}_{V}^{(\mathbf{r})} \left(\mathrm{Dom}(\mathbf{G})^{N\times d} \right) \subset \mathrm{Dom}(\mathbf{G})^{M\times d}$ and $\mathbf{T}_{V}^{(\mathbf{l})} \left(\mathrm{Dom}(\mathbf{G})^{d\times M} \right) \subset \mathrm{Dom}(\mathbf{G})^{d\times N}$, $\forall d\in \mathbb{N}_+$. Moreover, the following identities hold,
\begin{equation}\label{for[T,G]}
[\mathbf{G}, \mathbf{T}_{V}^{(\mathbf{r})}] (\Phi) = \tfrac{i\left(\Pi_{\geq 0}V \right)\hat{\Phi}(0^+)}{2\pi} \in L^2_+(\mathbb{R}; \mathbb{C}^{M \times d}), \quad [\mathbf{G}, \mathbf{T}_{V}^{(\mathbf{l})}] (\Psi) = \tfrac{i\hat{\Psi}(0^+) \Pi_{\geq 0}V  }{2\pi} \in L^2_+(\mathbb{R}; \mathbb{C}^{d \times N}),
\end{equation}$\forall \Phi \in \mathrm{Dom}(\mathbf{G})^{N\times d}$ and $\forall \Psi \in \mathrm{Dom}(\mathbf{G})^{d\times M}$.
\end{lem}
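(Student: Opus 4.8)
\noindent The plan is to prove the statement for the right Toeplitz operator $\mathbf{T}^{(\mathbf{r})}_{V}$ in detail and then to deduce the left one by transposition, using \eqref{TconjTrtoTl}, \eqref{TG=GT} and \eqref{TF=FT}. Fix $V \in L^2 \cap L^{\infty}(\mathbb{R}; \mathbb{C}^{M \times N})$, $d \in \mathbb{N}_+$ and $\Phi \in \mathrm{Dom}(\mathbf{G})^{N\times d}$, and write $g := V\Phi$. Two things have to be shown: that $\mathbf{T}^{(\mathbf{r})}_{V}(\Phi) = \Pi_{\geq 0}g \in \mathrm{Dom}(\mathbf{G})^{M\times d}$, and then the commutator identity. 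Before either, I would record two membership facts. Since $V,\Phi \in L^2$, H\"older's inequality \eqref{Hoelder} gives $g = V\Phi \in L^1 \cap L^2$. Since $\Phi \in \mathrm{Dom}(\mathbf{G})^{N\times d}$, formula \eqref{defGFIntro} can be rewritten as $x\Phi(x) = \mathbf{G}(\Phi)(x) + \tfrac{i}{2\pi}\hat{\Phi}(0^+)$, whence $x g = V\cdot(x\Phi) = V\,\mathbf{G}(\Phi) + \tfrac{i}{2\pi} V\,\hat{\Phi}(0^+) \in L^2$, the first summand lying in $L^2$ because $V \in L^{\infty}$ and the second because $V \in L^2$. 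Consequently $\hat{g}\in L^2$ and $\partial_\xi \hat{g} = -i\,\widehat{x g} \in L^2$ as tempered distributions, so $\hat g \in H^1(\mathbb{R})$; this regularity is what legitimates the Fourier-side bookkeeping below.

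\noindent For the domain claim, $\widehat{\mathbf{T}^{(\mathbf{r})}_{V}\Phi}\big|_{\mathbb{R}_+^*} = \hat{g}\big|_{\mathbb{R}_+^*} \in L^2(\mathbb{R}_+^*)$, and restricting the identity $\partial_\xi \hat g = -i\,\widehat{xg}$ to $\mathbb{R}_+^*$ yields $\partial_\xi\big(\hat{g}|_{\mathbb{R}_+^*}\big) = -i\,\widehat{xg}\big|_{\mathbb{R}_+^*} = -i\,\widehat{\Pi_{\geq 0}(xg)}\big|_{\mathbb{R}_+^*} \in L^2(\mathbb{R}_+^*)$, since $\Pi_{\geq 0}(xg) \in L^2_+(\mathbb{R};\mathbb{C}^{M\times d})$. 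Hence $\widehat{\mathbf{T}^{(\mathbf{r})}_{V}\Phi}|_{\mathbb{R}_+^*} \in H^1(\mathbb{R}_+^*)$, i.e. $\mathbf{T}^{(\mathbf{r})}_{V}(\Phi) \in \mathrm{Dom}(\mathbf{G})^{M\times d}$; the inclusion $\mathbf{T}^{(\mathbf{l})}_{V}\big(\mathrm{Dom}(\mathbf{G})^{d\times M}\big) \subset \mathrm{Dom}(\mathbf{G})^{d\times N}$ then follows from \eqref{TconjTrtoTl} together with $\mathfrak{T}\big(\mathrm{Dom}(\mathbf{G})^{d\times M}\big) = \mathrm{Dom}(\mathbf{G})^{M\times d}$.

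\noindent Now both $\mathbf{G}(\mathbf{T}^{(\mathbf{r})}_{V}\Phi)$ and $\mathbf{T}^{(\mathbf{r})}_{V}(\mathbf{G}\Phi)$ are well defined, and I would expand each by \eqref{defGFIntro} and the matrix-multiplication rule \eqref{matrixXL^2}:
\[
\mathbf{G}(\mathbf{T}^{(\mathbf{r})}_{V}\Phi)(x) = x\,\Pi_{\geq 0}g(x) - \tfrac{i}{2\pi}\widehat{\Pi_{\geq 0}g}(0^+), \qquad \mathbf{T}^{(\mathbf{r})}_{V}(\mathbf{G}\Phi)(x) = \Pi_{\geq 0}(xg)(x) - \tfrac{i}{2\pi}(\Pi_{\geq 0}V)(x)\,\hat{\Phi}(0^+),
\]
the second using $V\,\mathbf{G}(\Phi) = xg - \tfrac{i}{2\pi} V\,\hat\Phi(0^+)$ and $\Pi_{\geq 0}\big(V\hat\Phi(0^+)\big) = (\Pi_{\geq 0}V)\hat\Phi(0^+)$. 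Subtracting, the commutator equals $\big(x\,\Pi_{\geq 0}g - \Pi_{\geq 0}(xg)\big)(x) - \tfrac{i}{2\pi}\widehat{\Pi_{\geq 0}g}(0^+) + \tfrac{i}{2\pi}(\Pi_{\geq 0}V)(x)\,\hat\Phi(0^+)$, so the heart of the matter is the scalar-type identity $x\,\Pi_{\geq 0}g - \Pi_{\geq 0}(xg) = \tfrac{i}{2\pi}\hat{g}(0^+)$, a constant matrix-valued function. I would check this on the Fourier side: since $\hat g \in H^1(\mathbb{R})$ is continuous, $\widehat{x\,\Pi_{\geq 0}g} = i\,\partial_\xi\big(\mathbf{1}_{[0,\infty)}\hat g\big) = i\,\hat g(0^+)\delta_0 + \mathbf{1}_{(0,\infty)}\widehat{xg}$, while $\widehat{\Pi_{\geq 0}(xg)} = \mathbf{1}_{[0,\infty)}\widehat{xg}$; subtracting, the locally integrable parts agree almost everywhere and only the Dirac mass $i\,\hat g(0^+)\delta_0$ survives, whose inverse Fourier transform is the constant $\tfrac{1}{2\pi}i\,\hat g(0^+)$. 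Finally $\hat g(0^+) = \widehat{\Pi_{\geq 0}g}(0^+)$ because $\hat g$ is continuous at $0$ and $\widehat{\Pi_{\geq 0}g} = \mathbf{1}_{[0,\infty)}\hat g$, so the first and second terms cancel and one is left with $[\mathbf{G},\mathbf{T}^{(\mathbf{r})}_{V}]\Phi = \tfrac{i}{2\pi}(\Pi_{\geq 0}V)\hat\Phi(0^+)$, the first identity in \eqref{for[T,G]}.

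\noindent The left identity then follows formally: for $\Psi \in \mathrm{Dom}(\mathbf{G})^{d\times M}$, combining \eqref{TconjTrtoTl}, \eqref{TG=GT}, \eqref{TF=FT} and $\widehat{\mathfrak{T}\Psi}(0^+) = \big(\hat\Psi(0^+)\big)^{\mathrm{T}}$ gives $[\mathbf{G},\mathbf{T}^{(\mathbf{l})}_{V}]\Psi = \mathfrak{T}\,[\mathbf{G},\mathbf{T}^{(\mathbf{r})}_{V^{\mathrm{T}}}](\mathfrak{T}\Psi) = \mathfrak{T}\big(\tfrac{i}{2\pi}(\Pi_{\geq 0}V)^{\mathrm{T}}(\hat\Psi(0^+))^{\mathrm{T}}\big) = \tfrac{i}{2\pi}\hat\Psi(0^+)\,\Pi_{\geq 0}V$, using $(AB)^{\mathrm{T}} = B^{\mathrm{T}}A^{\mathrm{T}}$. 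I expect the only genuinely delicate point to be the combination of the domain step with the distributional identity $x\,\Pi_{\geq 0}g - \Pi_{\geq 0}(xg) = \tfrac{i}{2\pi}\hat g(0^+)$: one must first secure $xg \in L^2$ — which is exactly where the decomposition $x\Phi = \mathbf{G}\Phi + \tfrac{i}{2\pi}\hat\Phi(0^+)$ and both hypotheses $V \in L^2$ and $V \in L^{\infty}$ are used — and the continuity of $\hat g$ at the origin, before the bookkeeping of the Dirac mass at $0$ and the cancellations become legitimate; everything else is linear algebra in $\Pi_{\geq 0}$ and $\mathfrak{T}$.
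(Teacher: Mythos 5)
Your proof is correct. For this lemma the paper itself only points to lemma~4.1 of Sun \cite{SUNInterCMSDNLS} for the right-Toeplitz identity and then reads off the left one by transposition via \eqref{TF=FT}, \eqref{TG=GT}, \eqref{TconjTrtoTl}; your transposition step is the same, and you additionally supply a full proof of the cited input, which is the natural one. The two load-bearing points are handled soundly: (i) splitting $x\Phi=\mathbf{G}\Phi+\tfrac{i}{2\pi}\hat\Phi(0^+)$ by \eqref{defGFIntro} shows $xg\in L^2$ only if one uses \emph{both} $V\in L^{\infty}$ (for $V\,\mathbf{G}\Phi$) and $V\in L^2$ (for the constant contribution $V\hat\Phi(0^+)$), which explains the joint hypothesis $V\in L^2\cap L^{\infty}$ and yields $\hat g\in H^1(\mathbb{R})\hookrightarrow C_0(\mathbb{R})$, hence the continuity of $\hat g$ needed below; (ii) the scalar-type identity $x\,\Pi_{\geq 0}g-\Pi_{\geq 0}(xg)=\tfrac{i}{2\pi}\hat g(0)$ is exactly the Dirac mass $i\hat g(0)\delta_0$ produced by the jump of $\mathbf{1}_{[0,\infty)}\hat g$ at the origin, and the distributional integration by parts giving $\partial_\xi\bigl(\mathbf{1}_{[0,\infty)}\hat g\bigr)=\hat g(0)\delta_0+\mathbf{1}_{[0,\infty)}\partial_\xi\hat g$ is legitimate because the boundary term at infinity vanishes for $\hat g\in H^1$. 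The final cancellation of $-\tfrac{i}{2\pi}\widehat{\Pi_{\geq 0}g}(0^+)$ against $+\tfrac{i}{2\pi}\hat g(0)$, and the use of \eqref{matrixXL^2} to pull constant matrices through $\Pi_{\geq 0}$, are both stated and correct, so the argument stands.
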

\begin{proof}
The first formula is proved by lemma 4.1 of Sun  \cite{SUNInterCMSDNLS}. The second formula can be deduced by the first formula and the conjugate relations  \eqref{TF=FT}, \eqref{TG=GT}, \eqref{TconjTrtoTl} and \eqref{TconjTT}.
\end{proof}
\noindent The next lemma on the commutator formula between the infinitesimal generator $\mathbf{G}$ and double Toeplitz operators given by \eqref{TrTrdef}, \eqref{TlTldef} is a direct corollary of lemma $\ref{Lemm[T,G]}$.
\begin{lem}\label{Lemm[TT,G]}
If $U \in L^2_+ \bigcap L^{\infty}(\mathbb{R}; \mathbb{C}^{M \times N})$, $V \in L^2_+ \bigcap L^{\infty}(\mathbb{R}; \mathbb{C}^{P \times N})$ and $W \in L^2_+ \bigcap L^{\infty} (\mathbb{R}; \mathbb{C}^{P\times Q})$ for some positive integers $M,N, P, Q\in\mathbb{N}_+$, then $\forall d\in \mathbb{N}_+$, we have $\mathbf{H}_{U}^{(\mathbf{r})}\mathbf{H}_{V}^{(\mathbf{l})}\left(\mathrm{Dom}(\mathbf{G})^{P\times d} \right) \subset \mathrm{Dom}(\mathbf{G})^{M\times d}$ and $\mathbf{H}_{V}^{(\mathbf{l})}\mathbf{H}_{W}^{(\mathbf{r})}\left(\mathrm{Dom}(\mathbf{G})^{d\times Q} \right) \subset \mathrm{Dom}(\mathbf{G})^{d\times N}$.  Moreover, the following identities hold,
\begin{equation}\label{for[TT,G]}
\small
\begin{split}
& [\mathbf{G}, \mathbf{T}_{UV^*}^{(\mathbf{r})}] (\Phi) = [\mathbf{G}, \mathbf{H}_{U}^{(\mathbf{r})}\mathbf{H}_{V}^{(\mathbf{l})}] (\Phi) + \sum_{k=1}^N \sum_{j=1}^d \lim_{\epsilon \to 0^+} \langle \Phi, \chi_{\epsilon}V \mathbb{E}_{kj}^{(Nd)} \rangle_{L^2_+(\mathbb{R}; \mathbb{C}^{P \times d})} \tfrac{i U \mathbb{E}_{kj}^{(Nd)}}{2\pi}\in L^2_+(\mathbb{R}; \mathbb{C}^{M \times d});\\
& [\mathbf{G}, \mathbf{T}_{W^* V}^{(\mathbf{l})}] (\Psi) = [\mathbf{G}, \mathbf{H}_{V}^{(\mathbf{l})} \mathbf{H}_{W}^{(\mathbf{r})}] (\Psi) + \sum_{k=1}^d \sum_{j=1}^P \lim_{\epsilon \to 0^+} \langle \Psi, \chi_{\epsilon}  \mathbb{E}_{kj}^{(dP)}W \rangle_{L^2_+(\mathbb{R}; \mathbb{C}^{d \times Q})} \tfrac{i  \mathbb{E}_{kj}^{(dP)}V}{2\pi}\in L^2_+(\mathbb{R}; \mathbb{C}^{d \times N}),
\end{split}
\end{equation}$\forall \Phi \in \mathrm{Dom}(\mathbf{G})^{P\times d}$ and $\forall \Psi \in \mathrm{Dom}(\mathbf{G})^{d\times Q}$.
\end{lem}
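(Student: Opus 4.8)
The plan is to deduce the statement directly from Lemma~\ref{2H2Tlemma} and Lemma~\ref{Lemm[T,G]}, using the elementary fact that, since $U,V,W \in L^2_+ \cap L^{\infty}(\mathbb{R})$, the adjoint symbols $V^*$ and $W^*$ lie in $L^2_-$, so that $\Pi_{\geq 0}V^* = \Pi_{\geq 0}W^* = 0$ and the commutators produced by Lemma~\ref{Lemm[T,G]} with symbols $V^*,W^*$ vanish. It suffices to treat the $(\mathbf{rl})$ assertion: the $(\mathbf{lr})$ one then follows by conjugating with the transpose transform $\mathfrak{T}$ through \eqref{TG=GT}, \eqref{TconjTrtoTl}, \eqref{TconjHH} and \eqref{TconjTT}, or, equivalently, by repeating the argument below with the decomposition $\mathbf{H}^{(\mathbf{l})}_{V}\mathbf{H}^{(\mathbf{r})}_{W} = \mathbf{T}^{(\mathbf{l})}_{W^* V} - \mathbf{T}^{(\mathbf{l})}_{V}\mathbf{T}^{(\mathbf{l})}_{W^*}$ of Lemma~\ref{2H2Tlemma} and the vanishing $\Pi_{\geq 0}W^* = 0$.

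First I would establish the domain inclusion. The matrix functions $U$, $V^*$ and $UV^*$ all belong to $L^2 \cap L^{\infty}(\mathbb{R})$ (for the product $UV^*$ by the H\"older inequality \eqref{Hoelder}), so the Toeplitz operators $\mathbf{T}^{(\mathbf{r})}_{V^*}$, $\mathbf{T}^{(\mathbf{r})}_{U}$ and $\mathbf{T}^{(\mathbf{r})}_{UV^*}$ are bounded on the relevant $L^2_+$ spaces and, by Lemma~\ref{Lemm[T,G]},
\[ \mathbf{T}^{(\mathbf{r})}_{V^*}(\mathrm{Dom}(\mathbf{G})^{P \times d}) \subset \mathrm{Dom}(\mathbf{G})^{N \times d}, \qquad \mathbf{T}^{(\mathbf{r})}_{U}(\mathrm{Dom}(\mathbf{G})^{N \times d}) \subset \mathrm{Dom}(\mathbf{G})^{M \times d}, \qquad \mathbf{T}^{(\mathbf{r})}_{UV^*}(\mathrm{Dom}(\mathbf{G})^{P \times d}) \subset \mathrm{Dom}(\mathbf{G})^{M \times d}. \]
Since $\mathbf{H}^{(\mathbf{r})}_{U}\mathbf{H}^{(\mathbf{l})}_{V} = \mathbf{T}^{(\mathbf{r})}_{UV^*} - \mathbf{T}^{(\mathbf{r})}_{U}\mathbf{T}^{(\mathbf{r})}_{V^*}$ by Lemma~\ref{2H2Tlemma}, the left-hand side is a difference of operators preserving these domains, which yields $\mathbf{H}^{(\mathbf{r})}_{U}\mathbf{H}^{(\mathbf{l})}_{V}(\mathrm{Dom}(\mathbf{G})^{P \times d}) \subset \mathrm{Dom}(\mathbf{G})^{M \times d}$, the first of the two domain inclusions in the statement (the second being handled symmetrically).

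Next, for $\Phi \in \mathrm{Dom}(\mathbf{G})^{P\times d}$, I would rewrite Lemma~\ref{2H2Tlemma} as $\mathbf{T}^{(\mathbf{r})}_{UV^*} = \mathbf{H}^{(\mathbf{r})}_{U}\mathbf{H}^{(\mathbf{l})}_{V} + \mathbf{T}^{(\mathbf{r})}_{U}\mathbf{T}^{(\mathbf{r})}_{V^*}$ and apply the Leibniz identity $[\mathbf{G}, AB]\Phi = [\mathbf{G},A](B\Phi) + A([\mathbf{G},B]\Phi)$, which is legitimate on $\mathrm{Dom}(\mathbf{G})^{P\times d}$ because $\mathbf{T}^{(\mathbf{r})}_{V^*}$ preserves the relevant domains, to obtain
\[ [\mathbf{G}, \mathbf{T}^{(\mathbf{r})}_{UV^*}]\Phi = [\mathbf{G}, \mathbf{H}^{(\mathbf{r})}_{U}\mathbf{H}^{(\mathbf{l})}_{V}]\Phi + [\mathbf{G}, \mathbf{T}^{(\mathbf{r})}_{U}](\mathbf{T}^{(\mathbf{r})}_{V^*}\Phi) + \mathbf{T}^{(\mathbf{r})}_{U}([\mathbf{G}, \mathbf{T}^{(\mathbf{r})}_{V^*}]\Phi). \]
Here the last term vanishes because $\Pi_{\geq 0}V^* = 0$, while Lemma~\ref{Lemm[T,G]} together with $\Pi_{\geq 0}U = U$ identifies the middle term as $\tfrac{i}{2\pi}\,U\,\widehat{\mathbf{T}^{(\mathbf{r})}_{V^*}\Phi}(0^+) = \tfrac{i}{2\pi}\,U\,\widehat{\Pi_{\geq 0}(V^*\Phi)}(0^+)$, the point value at $0^+$ being well defined since $\mathbf{T}^{(\mathbf{r})}_{V^*}\Phi \in \mathrm{Dom}(\mathbf{G})^{N\times d}$ has Fourier transform right-continuous at $0$.

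It remains to evaluate $\widehat{\Pi_{\geq 0}(V^*\Phi)}(0^+)$ by means of \eqref{HATF0POsitiv}: transferring $\Pi_{\geq 0}$ onto the test function $\chi_{\epsilon}\mathbb{E}^{(Nd)}_{kj}\in H_+^{+\infty}(\mathbb{R};\mathbb{C}^{N\times d})$ and then using cyclicity of the trace gives $\langle \Pi_{\geq 0}(V^*\Phi), \chi_{\epsilon}\mathbb{E}^{(Nd)}_{kj}\rangle = \langle V^*\Phi, \chi_{\epsilon}\mathbb{E}^{(Nd)}_{kj}\rangle = \langle \Phi, \chi_{\epsilon}V\mathbb{E}^{(Nd)}_{kj}\rangle$, because $\mathrm{tr}(V^*\Phi(\mathbb{E}^{(Nd)}_{kj})^*) = \mathrm{tr}(\Phi(\mathbb{E}^{(Nd)}_{kj})^*V^*)$ pointwise; substituting this into \eqref{HATF0POsitiv} and then into the previous display produces exactly the first identity of \eqref{for[TT,G]}. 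I do not expect a genuine obstacle in this proof; the only point demanding care is the domain bookkeeping — checking, before each invocation of \eqref{defGFIntro}, of Lemma~\ref{Lemm[T,G]}, or of the evaluation $\widehat{\,\cdot\,}(0^+)$, that the function to which it is applied actually lies in the appropriate $\mathrm{Dom}(\mathbf{G})$ — which is precisely why the domain inclusions are disposed of first.
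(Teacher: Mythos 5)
Your proposal follows essentially the same route as the paper's own proof: decompose $\mathbf{H}_{U}^{(\mathbf{r})}\mathbf{H}_{V}^{(\mathbf{l})}$ via Lemma~\ref{2H2Tlemma}, apply Lemma~\ref{Lemm[T,G]} and the Leibniz identity for the commutator, use $\Pi_{\geq 0}V^*=0$ to kill one term, and evaluate $\widehat{\mathbf{T}^{(\mathbf{r})}_{V^*}\Phi}(0^+)$ via the approximate identity $\chi_\epsilon$ and the adjoint relation \eqref{<TrG,A>}. The domain bookkeeping and the final transposition step for the $(\mathbf{lr})$ identity also mirror the paper.
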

\begin{proof}
We have $\mathbf{T}_{UV^*}^{(\mathbf{r})} \left( \mathrm{Dom}(\mathbf{G})^{P\times d}\right) \subset \mathrm{Dom}(\mathbf{G})^{M\times d}$ and $\mathbf{T}_{U }^{(\mathbf{r})} \mathbf{T}_{V^*}^{(\mathbf{r})}  \left( \mathrm{Dom}(\mathbf{G})^{P\times d}\right) \subset \mathrm{Dom}(\mathbf{G})^{M\times d}$ by lemma $\ref{Lemm[T,G]}$. Then \eqref{TconjHH} and \eqref{rel2HT-TTfor} yield  that $\mathbf{H}^{(\mathbf{r})}_{U} \mathbf{H}^{(\mathbf{l})}_{V} = \mathfrak{T} \mathbf{H}^{(\mathbf{l})}_{U^{\mathrm{T}}} \mathbf{H}^{(\mathbf{r})}_{V^{\mathrm{T}}} \mathfrak{T} : \mathrm{Dom}(\mathbf{G})^{P\times d}  \to \mathrm{Dom}(\mathbf{G})^{M\times d}$. Since  $V^* \in L^2_- \bigcap L^{\infty}(\mathbb{R}; \mathbb{C}^{N \times P})$, then $[\mathbf{G}, \mathbf{T}_{V^*}^{(\mathbf{r})}] (\Phi) = \tfrac{i}{2\pi}\left(\Pi_{\geq 0}V^* \right)\hat{\Phi}(0^+)=0_{N \times d}$, $\forall \Phi \in \mathrm{Dom}(\mathbf{G})^{P\times d}$. So
\begin{equation}\label{[G,TUTVstar]1}
[\mathbf{G}, \mathbf{T}_{U}^{(\mathbf{r})} \mathbf{T}_{V^*}^{(\mathbf{r})}] (\Phi) = [\mathbf{G}, \mathbf{T}_{U}^{(\mathbf{r})} ] \mathbf{T}_{V^*}^{(\mathbf{r})}(\Phi) = \tfrac{iU}{2\pi} \hat{F}(0^+) \quad \mathrm{with} \quad F= \mathbf{T}_{V^*}^{(\mathbf{r})}(\Phi) \in \mathrm{Dom}(\mathbf{G})^{N\times d}.
\end{equation}Lemma $\ref{ABL2+forR}$ implies that  $\chi_{\epsilon}V  \mathbb{E}_{kj}^{(Nd)} = \mathbf{T}_{V}^{(\mathbf{r})}(\chi_{\epsilon}  \mathbb{E}_{kj}^{(Nd)}) \in L^2_+(\mathbb{R}; \mathbb{C}^{P \times d})$, then we have
\begin{equation}\label{hatF0+Phi}
\small
\hat{F}(0^+) = \sum_{k=1}^N \sum_{j=1}^d \lim_{\epsilon \to 0^+} \langle \mathbf{T}_{V^*}^{(\mathbf{r})}(\Phi), \chi_{\epsilon}  \mathbb{E}_{kj}^{(Nd)} \rangle_{L^2_+(\mathbb{R}; \mathbb{C}^{N \times d})}  \mathbb{E}_{kj}^{(Nd)} =\sum_{k=1}^N \sum_{j=1}^d \lim_{\epsilon \to 0^+} \langle \Phi, \chi_{\epsilon}V \mathbb{E}_{kj}^{(Nd)} \rangle_{L^2_+(\mathbb{R}; \mathbb{C}^{P \times d})}  \mathbb{E}_{kj}^{(Nd)},
\end{equation}by using  \eqref{HATF0POsitiv} and \eqref{<TrG,A>}. Then the first formula of \eqref{for[TT,G]} is obtained by plugging formulas \eqref{rel2HT-TTfor} and \eqref{hatF0+Phi} into \eqref{[G,TUTVstar]1}. The second formula of \eqref{for[TT,G]} is equivalent to the first one through the conjugate relations \eqref{TF=FT}, \eqref{TG=GT}, \eqref{TconjHH}, \eqref{TconjTrtoTl} and \eqref{TconjTT}.
\end{proof}

\subsection{Proof of Lax pair theorem $\ref{LaxPairThmR}$} 
\begin{lem}\label{HUVWKUVWLem}
Given $s> \frac{1}{2}$ and $M,N,P,Q \in \mathbb{N}_+$, if $U \in H^s_+(\mathbb{R}; \mathbb{C}^{M \times N})$, $V \in H^s_+(\mathbb{R}; \mathbb{C}^{P \times N})$ and $W \in H^s_+(\mathbb{R}; \mathbb{C}^{P\times Q})$, then $\forall d \in \mathbb{N}_+$,  the following identities hold,
\begin{equation}\label{HUVWfor}
\begin{split}
\mathbf{H}^{(\mathbf{r})}_{\Pi_{\geq 0}(UV^*W)} = &\mathbf{T}^{(\mathbf{r})}_{UV^*}\mathbf{H}^{(\mathbf{r})}_W + \mathbf{H}^{(\mathbf{r})}_U \mathbf{T}^{(\mathbf{l})}_{W^* V} - \mathbf{H}^{(\mathbf{r})}_U \mathbf{H}^{(\mathbf{l})}_V \mathbf{H}^{(\mathbf{r})}_W: L^2_+(\mathbb{R}; \mathbb{C}^{d \times Q}) \to L^2_+(\mathbb{R}; \mathbb{C}^{M \times d});\\
\mathbf{H}^{(\mathbf{l})}_{\Pi_{\geq 0}(UV^*W)} = & \mathbf{T}^{(\mathbf{l})}_{V^* W}\mathbf{H}^{(\mathbf{l})}_U + \mathbf{H}^{(\mathbf{l})}_W \mathbf{T}^{(\mathbf{r})}_{V U^* } - \mathbf{H}^{(\mathbf{l})}_W \mathbf{H}^{(\mathbf{r})}_V \mathbf{H}^{(\mathbf{l})}_U: L^2_+(\mathbb{R}; \mathbb{C}^{M \times d}) \to L^2_+(\mathbb{R}; \mathbb{C}^{d \times Q}).\\ 
\end{split}
\end{equation}
\end{lem}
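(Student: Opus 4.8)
The plan is to compute the Hankel operator $\mathbf{H}^{(\mathbf{r})}_{\Pi_{\geq 0}(UV^*W)}$ directly by testing it against a generic $\Phi \in H^1_+(\mathbb{R}; \mathbb{C}^{d \times Q})$ and repeatedly inserting the orthogonal decomposition $\mathrm{id} = \Pi_{\geq 0} + \Pi_{<0}$ together with formula \eqref{Pi<0FR}. Since the symbol $\Pi_{\geq 0}(UV^*W)$ lies in $H^s_+(\mathbb{R}; \mathbb{C}^{M\times Q}) \subset H^{\frac{1}{2}}_+ \cap L^\infty$ (using $s>\frac12$ and the Sobolev embedding, plus Lemma \ref{ABL2+forR} to control the products), all the operators appearing are well-defined and bounded, so the identities need only be checked on the dense subspace $H^1_+$.

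First I would write, using \eqref{matrixXL^2} and the fact that $\Pi_{<0}(UV^*W)$ multiplied on the right by $\Phi^*$ still has nonnegative-frequency contributions to extract,
\[
\mathbf{H}^{(\mathbf{r})}_{\Pi_{\geq 0}(UV^*W)}(\Phi) = \Pi_{\geq 0}\!\left( \Pi_{\geq 0}(UV^*W)\, \Phi^* \right) = \Pi_{\geq 0}\!\left( UV^*W\, \Phi^* \right) - \Pi_{\geq 0}\!\left( \Pi_{<0}(UV^*W)\, \Phi^* \right),
\]
and since $\Phi^* \in L^2_-$ and $\Pi_{<0}(UV^*W) \in L^2_-$, Lemma \ref{ABL2+forR} gives that their product lies in $L^2_-$, so the last term vanishes. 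Thus $\mathbf{H}^{(\mathbf{r})}_{\Pi_{\geq 0}(UV^*W)}(\Phi) = \Pi_{\geq 0}(UV^*W\Phi^*)$. Next I would peel off the operators one at a time: insert $W\Phi^* = \Pi_{\geq 0}(W\Phi^*) + \Pi_{<0}(W\Phi^*)$, noting $\Pi_{\geq 0}(W\Phi^*) = \mathbf{H}^{(\mathbf{r})}_W(\Phi)$ and that by \eqref{Pi<0FR} we have $\Pi_{<0}(W\Phi^*) = \big(\Pi_{\geq 0}(\Phi W^*)\big)^* = \big(\mathbf{H}^{(\mathbf{r})}_{\Phi}(W)\big)^*$; but it is cleaner to write $\Pi_{<0}(W\Phi^*) = W\Phi^* - \Pi_{\geq 0}(W\Phi^*)$ and expand. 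Substituting and using \eqref{matrixXL^2} to pull $U$ and $V^*$ through $\Pi_{\geq 0}$ whenever the intermediate factor is already of the correct frequency sign, the term $\Pi_{\geq 0}\big(UV^* \Pi_{\geq 0}(W\Phi^*)\big) = \mathbf{T}^{(\mathbf{r})}_{UV^*}\mathbf{H}^{(\mathbf{r})}_W(\Phi)$ appears directly. For the remaining piece $\Pi_{\geq 0}\big(UV^* \Pi_{<0}(W\Phi^*)\big)$ I would again split $V^*\Pi_{<0}(W\Phi^*)$ via $\mathrm{id} = \Pi_{\geq 0}+\Pi_{<0}$: the $\Pi_{<0}$ part, once multiplied by $U$ and projected, is killed because $\Pi_{<0}(V^*\cdot) \in L^2_-$ again combines with nothing positive — more precisely $\Pi_{\geq 0}\big(U \Pi_{<0}(\cdots)\big)$ is exactly $\mathbf{H}^{(\mathbf{r})}_U$ applied to the adjoint — whereas the $\Pi_{\geq 0}$ part rebuilds $\mathbf{T}^{(\mathbf{l})}_{W^*V}$ acting first. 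Careful bookkeeping of which of $\mathbf{H}^{(\mathbf{r})}_U \mathbf{T}^{(\mathbf{l})}_{W^*V}$ and $-\mathbf{H}^{(\mathbf{r})}_U \mathbf{H}^{(\mathbf{l})}_V \mathbf{H}^{(\mathbf{r})}_W$ arises then yields the first identity; this hinges on recognizing $\Pi_{<0}(V^*G) = \big(\mathbf{H}^{(\mathbf{l})}_V$-type expression$\big)$ and matching it to the definition \eqref{HrHldef} of $\mathbf{H}^{(\mathbf{r})}_U\mathbf{H}^{(\mathbf{l})}_V$.

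The main obstacle I anticipate is the sign/adjoint bookkeeping: each application of $\Pi_{<0}(AB^*) = \big(\Pi_{\geq 0}(BA^*)\big)^*$ from \eqref{Pi<0FR} introduces a conjugate-transpose, and one must track carefully which factor ends up inside which $\Pi_{\geq 0}$ so that the resulting composition matches the $\mathbf{r}/\mathbf{l}$-conventions in \eqref{rlHankel}–\eqref{HlHrdef} rather than their adjoints. The cross terms can easily be mislabeled; the safe route is to verify the identity by pairing both sides against an arbitrary $\Psi \in L^2_+(\mathbb{R}; \mathbb{C}^{M\times d})$ and using the adjoint relations \eqref{HrHlinnprod} and \eqref{<TrG,A>}, reducing everything to scalar trace integrals $\int_{\mathbb{R}} \mathrm{tr}(\cdots)\,\mathrm{d}x$ where the projections can be freely moved. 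Once the first identity of \eqref{HUVWfor} is established, the second follows immediately from the conjugation relations \eqref{TconjHrtoHl}, \eqref{TconjHH}, \eqref{TconjTrtoTl}, \eqref{TconjTT} via the transpose transform $\mathfrak{T}$, exactly as in the proofs of Lemmas \ref{2H2Tlemma} and \ref{Lemm[T,G]}, together with the observation that $\mathfrak{T}$ commutes with $\Pi_{\geq 0}$ by \eqref{TF=FT} so that $\mathfrak{T}\,\Pi_{\geq 0}(UV^*W)\,\mathfrak{T} = \Pi_{\geq 0}\big(W^\mathrm{T}\overline{V}\,U^\mathrm{T}\big)$ matches the transposed symbol correctly.
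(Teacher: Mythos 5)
Your opening step is correct and matches the paper: using that $\Pi_{<0}(UV^*W)\,\Phi^*\in L^2_-$ (Lemma \ref{ABL2+forR}), one reduces to $\Pi_{\geq 0}(UV^*W\Phi^*)$, and splitting $W\Phi^*=\Pi_{\geq 0}(W\Phi^*)+\Pi_{<0}(W\Phi^*)$ peels off $\mathbf{T}^{(\mathbf{r})}_{UV^*}\mathbf{H}^{(\mathbf{r})}_W$. Likewise, your plan to deduce the second identity from the first by conjugating with $\mathfrak{T}$ via \eqref{TF=FT}, \eqref{TconjHrtoHl}, \eqref{TconjHH}, \eqref{TconjTrtoTl}, \eqref{TconjTT} is exactly what the paper does. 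The problem is the middle step. You propose to treat the remainder $\Pi_{\geq 0}\bigl(UV^*\Pi_{<0}(W\Phi^*)\bigr)$ by again splitting $V^*\Pi_{<0}(W\Phi^*)$ as $\Pi_{\geq 0}(\cdot)+\Pi_{<0}(\cdot)$, claiming the $\Pi_{<0}$ part is ``killed'' and the $\Pi_{\geq 0}$ part ``rebuilds $\mathbf{T}^{(\mathbf{l})}_{W^*V}$''. But $V^*\in L^2_-$ and $\Pi_{<0}(W\Phi^*)\in L^2_-$, so by Lemma \ref{ABL2+forR} the product $V^*\Pi_{<0}(W\Phi^*)$ is already entirely in $L^2_-$: its $\Pi_{\geq 0}$ part is zero and its $\Pi_{<0}$ part is everything. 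The split you describe is vacuous and cannot produce either of the remaining two operators. Moreover the claim that the $\Pi_{<0}$ part ``once multiplied by $U$ and projected, is killed'' is simply false — that is precisely a Hankel of $U$, not zero — and you contradict yourself in the next clause.

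What is actually needed, and what the paper does, is to apply \eqref{Pi<0FR} to write $\Pi_{<0}(W\Phi^*)=\bigl(\Pi_{\geq 0}(\Phi W^*)\bigr)^*$, so that $UV^*\Pi_{<0}(W\Phi^*)=U\bigl(\Pi_{\geq 0}(\Phi W^*)V\bigr)^*$ and the remainder becomes $\mathbf{H}^{(\mathbf{r})}_U\bigl(\Pi_{\geq 0}(\Phi W^*)V\bigr)$; one then identifies $\Pi_{\geq 0}(\Phi W^*)V=\mathbf{T}^{(\mathbf{l})}_V\mathbf{T}^{(\mathbf{l})}_{W^*}(\Phi)$ (again by Lemma \ref{ABL2+forR}, since the product is already in $L^2_+$) and invokes Lemma \ref{2H2Tlemma}, i.e. the identity $\mathbf{T}^{(\mathbf{l})}_V\mathbf{T}^{(\mathbf{l})}_{W^*}=\mathbf{T}^{(\mathbf{l})}_{W^*V}-\mathbf{H}^{(\mathbf{l})}_V\mathbf{H}^{(\mathbf{r})}_W$. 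This formula from \eqref{rel2HT-TTfor} is the indispensable tool that generates the cross terms $\mathbf{H}^{(\mathbf{r})}_U\mathbf{T}^{(\mathbf{l})}_{W^*V}-\mathbf{H}^{(\mathbf{r})}_U\mathbf{H}^{(\mathbf{l})}_V\mathbf{H}^{(\mathbf{r})}_W$, and you never invoke it. Your fallback of testing against an arbitrary $\Psi\in L^2_+(\mathbb{R};\mathbb{C}^{M\times d})$ could in principle work, but you only mention it and do not carry it out, so as written the proposal has a genuine gap.
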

\noindent The proof of lemma $\ref{HUVWKUVWLem}$ is slightly different from lemma 3.11 of Sun \cite{SUNMSzego} due to the $0$-mode. 
\begin{proof}
Since $UV^*W \in H^1(\mathbb{R}; \mathbb{C}^{M \times Q})$,  $\forall F \in L^2_+(\mathbb{R}; \mathbb{C}^{d \times Q})$, we have $\Pi_{<0}(UV^* W)F^* \in L^2_-(\mathbb{R}; \mathbb{C}^{M \times d})$ by lemma $\ref{ABL2+forR}$. Formula \eqref{Pi<0FR} yields that $\Pi_{<0}(WF^*) = \left(\Pi_{\geq 0}(F W^*) \right)^* \in L^2_-(\mathbb{R}; \mathbb{C}^{P \times d})$. Then
\begin{equation}\label{Hrpiuvw1}
\mathbf{H}^{(\mathbf{r})}_{\Pi_{\geq 0}(UV^*W)} (F) = \Pi_{\geq 0}(UV^* W F^*) =  \mathbf{T}^{(\mathbf{r})}_{UV^*}\mathbf{H}^{(\mathbf{r})}_W(F) + \mathbf{H}^{(\mathbf{r})}_{U} \left( \Pi_{\geq 0}(FW^*)V\right) \in  L^2_+ (\mathbb{R}; \mathbb{C}^{M \times d}).
\end{equation}Using lemma $\ref{ABL2+forR}$ again, we have $ \Pi_{\geq 0}(FW^*)V = \mathbf{T}^{(\mathbf{l})}_{V}\mathbf{T}^{(\mathbf{l})}_{W^*}(F)\in L^2_+ (\mathbb{R}; \mathbb{C}^{d \times N})$. The first formula of \eqref{HUVWfor} is  obtained by plugging \eqref{rel2HT-TTfor} into \eqref{Hrpiuvw1}. The second formula of \eqref{HUVWfor} is deduced from the first one through the conjugate relations \eqref{TF=FT}, \eqref{TconjHrtoHl} \eqref{TconjHH}, \eqref{TconjTrtoTl} and \eqref{TconjTT}. Precisely, we have
\begin{equation}\label{TconjHlPiUVstarW}
\mathbf{H}^{(\mathbf{l})}_{\Pi_{\geq 0}(UV^*W)} = \mathfrak{T} \mathbf{H}^{(\mathbf{r})}_{\Pi_{\geq 0}(W^{\mathrm{T}} (V^{\mathrm{T}})^* U^{\mathrm{T}})}\mathfrak{T} =\mathfrak{T}  \left( \mathbf{T}^{(\mathbf{r})}_{W^{\mathrm{T}}  \overline{V}} \mathbf{H}_{U^{\mathrm{T}}}^{(\mathbf{r})} +  \mathbf{H}_{W^{\mathrm{T}}}^{(\mathbf{r})} \mathbf{T}^{(\mathbf{l})}_{\overline{U} V^{\mathrm{T}}  } - \mathbf{H}_{W^{\mathrm{T}}}^{(\mathbf{r})}\mathbf{H}_{V^{\mathrm{T}}}^{(\mathbf{l})}\mathbf{H}_{U^{\mathrm{T}}}^{(\mathbf{r})} \right)\mathfrak{T}.
\end{equation}
\end{proof} 
\noindent The proof of   proposition $\ref{LaxPairThmR}$ is similar to  theorem 1.3 of Sun \cite{SUNMSzego}. 
\begin{proof}[Proof of proposition $\ref{LaxPairThmR}$]
Given  $s>\frac{1}{2}$, set $V=W=U\in H^s_+(\mathbb{R};  \mathbb{C}^{M\times N})$ in formula \eqref{HUVWfor}. Then
\begin{equation}
\begin{split}
& \mathbf{H}^{(\mathbf{r})}_{U}\mathbf{H}^{(\mathbf{l})}_{\Pi_{\geq 0}(U U^* U)} - \mathbf{H}^{(\mathbf{r})}_{\Pi_{\geq 0}(U U^* U)} \mathbf{H}^{(\mathbf{l})}_{U} = \left[\mathbf{H}^{(\mathbf{r})}_{U}\mathbf{H}^{(\mathbf{l})}_{U}, \; \mathbf{T}^{(\mathbf{r})}_{U U^*}\right] \in \mathcal{S}_1(L^2_+(\mathbb{R};  \mathbb{C}^{M\times d})); \\
& \mathbf{H}^{(\mathbf{l})}_{U}\mathbf{H}^{(\mathbf{r})}_{\Pi_{\geq 0}(U U^* U)} - \mathbf{H}^{(\mathbf{l})}_{\Pi_{\geq 0}(U U^* U)} \mathbf{H}^{(\mathbf{r})}_{U} = \left[\mathbf{H}^{(\mathbf{l})}_{U}\mathbf{H}^{(\mathbf{r})}_{U}, \; \mathbf{T}^{(\mathbf{l})}_{U^* U}\right] \in \mathcal{S}_1(L^2_+(\mathbb{R};  \mathbb{C}^{d\times N})).
\end{split}
\end{equation}where $\mathcal{S}_1(L^2_+(\mathbb{R};  \mathbb{C}^{M\times d}))$ denotes the space of trace-class operators on $L^2_+(\mathbb{R};  \mathbb{C}^{M\times d})$.
\end{proof}

\begin{rem}
Thanks to formula \eqref{rel2HT-TTfor}, $(\mathbf{H}^{(\mathbf{r})}_{U}\mathbf{H}^{(\mathbf{l})}_{U}, -i \mathbf{T}^{(\mathbf{r})}_{U} \mathbf{T}^{(\mathbf{r})}_{ U^*} )$ and 
 $(\mathbf{H}^{(\mathbf{l})}_{U}\mathbf{H}^{(\mathbf{r})}_{U}, -i \mathbf{T}^{(\mathbf{l})}_{U} \mathbf{T}^{(\mathbf{l})}_{U^*})$ are also Lax pairs of the matrix Szeg\H{o} equation \eqref{MSzego}. 
\end{rem}

\section{The explicit formula}\label{SectExplForR}
\noindent This section is devoted to establishing the explicit formula \eqref{expforMSzegoR} in theorem $\ref{MSzegoRExp}$. The proof is based on the Lax pair structure in proposition $\ref{LaxPairThmR}$. Due to the unitary equivalence \eqref{TconjHH} between the two lax operators $\mathbf{H}^{(\mathbf{l})}_{U}\mathbf{H}^{(\mathbf{r})}_{U}$ and $\mathbf{H}^{(\mathbf{r})}_{U^{\mathrm{T}}}\mathbf{H}^{(\mathbf{l})}_{U^{\mathrm{T}}}$.   Let $\mathbf{L}_U:=\mathbf{H}^{(\mathbf{l})}_{U}\mathbf{H}^{(\mathbf{r})}_{U} \in \mathcal{S}_1(L^2_+(\mathbb{R};  \mathbb{C}^{d\times N}))$ and $\mathbf{A}_U := -i \mathbf{T}^{(\mathbf{l})}_{U^* U}$ denote  the canonical Lax pair of the matrix Szeg\H{o} equation \eqref{MSzego} on the line. Then  \eqref{TwoHeiLaxMSzegoR} reads as
\begin{equation}
\partial_t \mathbf{L}_{U(t)} = [\mathbf{A}_{U(t)}, \mathbf{L}_{U(t)}], 
\end{equation}Let $\mathbb{X}_{MN}:= \mathcal{B}_{\mathbb{C}}(L^2_+(\mathbb{R}; \mathbb{C}^{M \times N}))$, $\forall M,N \in \mathbb{N}_+$. Assume that $U \in C^{\infty}(\mathbb{R}; H^s_+(\mathbb{R};  \mathbb{C}^{M\times N}))$ solves \eqref{MSzego}, $\forall d \in \mathbb{N}_+$,  the mapping $\mathcal{A}^{(\mathbf{l})}: t \in \mathbb{R} \mapsto \mathcal{A}^{(\mathbf{l})}(t)  \in   \mathcal{B}_{\mathbb{C}}(\mathbb{X}_{dN})$ is continuous, where
\begin{equation*}
\mathcal{A}^{(\mathbf{l})}(t): \mathscr{W}\in \mathbb{X}_{dN} \mapsto \mathcal{A}^{(\mathbf{l})}(t)(\mathscr{W}) = \mathbf{A}_{U(t)} \mathscr{W}=-i \mathbf{T}^{(\mathbf{l})}_{ U(t)^* U(t)}\mathscr{W} \in \mathbb{X}_{dN}.
\end{equation*}Proposition $\ref{CauchyThmODE}$ yields that there exists a unique function $\mathscr{W}\in C^1(\mathbb{R}; \mathcal{B}_{\mathbb{C}}(L^2_+(\mathbb{R};\mathbb{C}^{d \times N})))$ solving 
\begin{equation}\label{WODER}
\begin{split}
 \tfrac{\mathrm{d}}{\mathrm{d}t}\mathscr{W}(t) = \mathbf{A}_{U(t)} \mathscr{W}(t)  = -i \mathbf{T}^{(\mathbf{l})}_{U(t)^* U(t)}\mathscr{W}(t), \quad \mathscr{W}(0)= \mathrm{id}_{L^2_+(\mathbb{R};\mathbb{C}^{d \times N})}.
\end{split}
\end{equation}Let $\mathscr{W}(t)^* \in \mathbb{X}_{dN}$ denotes the $L^2_+(\mathbb{R}; \mathbb{C}^{d \times N})$-adjoint of $\mathscr{W}(t)$. Since $i \mathbf{A}_{U(t)} = \mathbf{T}^{(\mathbf{l})}_{ U(t)^* U(t)}  \in \mathbb{X}_{dN}$ is self-adjoint, then $\tfrac{\mathrm{d}}{\mathrm{d}t} \left( \mathscr{W}(t)^* \mathscr{W}(t)\right)=0_{\mathbb{X}_{dN}}$ and $\mathscr{W}\mathscr{W}^* \in C^1(\mathbb{R}; \mathcal{B}_{\mathbb{C}}(L^2_+(\mathbb{R};\mathbb{C}^{d \times N})))$ solves
\begin{equation*}
\partial_t \mathscr{V}(t) = [\mathbf{A}_{U(t)},  \mathscr{V}(t)], \quad \mathscr{V}(0)=\mathscr{W}(0)\mathscr{W}(0)^*= \mathrm{id}_{L^2_+(\mathbb{R};\mathbb{C}^{d \times N})}.
\end{equation*}Thanks to the uniqueness argument in proposition $\ref{CauchyThmODE}$, we have $\mathscr{W}(t)\mathscr{W}(t)^*=\mathrm{id}_{L^2_+(\mathbb{R};\mathbb{C}^{d \times N})}$, $\forall t \in \mathbb{R}$. So $\mathscr{W}(t)=\left(\mathscr{W}(t)^* \right)^{-1} \in \mathbb{X}_{dN}$ is a unitary operator and $\mathbf{L}_{U(t)}$ is unitarily equivalent to $\mathbf{L}_{U(0)}$, i.e.
\begin{equation}\label{UniEquiLutLu0}
\mathbf{H}^{(\mathbf{l})}_{U(t)}\mathbf{H}^{(\mathbf{r})}_{U(t)} = \mathscr{W}(t)\mathbf{H}^{(\mathbf{l})}_{U(0)}\mathbf{H}^{(\mathbf{r})}_{U(0)} \mathscr{W}(t)^* \in \mathcal{S}_1(L^2_+(\mathbb{R};  \mathbb{C}^{d\times N})).
\end{equation}The next lemma provides some useful estimates for dominated convergence in the conjugation acting algorithm.
\begin{lem}\label{LemWtstar}
Given $M,N,d \in \mathbb{N}_+$ and $s>\tfrac{1}{2}$, if $U \in C^{\infty}(\mathbb{R}; H^s_+(\mathbb{R};  \mathbb{C}^{M\times N}))$ solves equation \eqref{MSzego} with $U(0)=U_0$, $\forall (Q, P) \in  \mathbb{C}^{d \times M} \times \mathbb{C}^{d \times N}$, we have $\mathscr{W}(t)^* \left(Q U(t) \right) = Q U_0 \in H^s_+(\mathbb{R}; \mathbb{C}^{d \times N})$ and
\begin{equation}\label{W(t)starlinea}
\lim_{\epsilon \to 0^+} \left( \|\mathscr{W}(t)^* \left(\chi_{\epsilon} Q U(t) \right) - \chi_{\epsilon} Q U_0 \|_{L^2_+(\mathbb{R}; \mathbb{C}^{d \times N})} + \|\mathscr{W}(t)^* \left(\chi_{\epsilon} P \right) - e^{it \mathbf{H}^{(\mathbf{l})}_{U_0}\mathbf{H}^{(\mathbf{r})}_{U_0} }(\chi_{\epsilon} P) \|_{L^2_+(\mathbb{R}; \mathbb{C}^{d \times N})} \right) = 0,
\end{equation}where $\chi_{\epsilon}:x \in \mathbb{R} \mapsto (1 -i \epsilon x)^{-1} \in \mathbb{C}$ is defined as \eqref{chieps}, $0 <\epsilon <1$.
\end{lem}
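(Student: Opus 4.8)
The plan is to exploit the Lax pair identity $\eqref{UniEquiLutLu0}$ together with the intertwining relation between $\mathscr{W}(t)$ and the Hankel operators, and then handle the $0$-mode correction coming from the fact that $\chi_\epsilon Q U(t)$ is only an approximation. First I would establish the exact identity $\mathscr{W}(t)^*\!\left(Q U(t)\right) = Q U_0$. To see this, fix $Q \in \mathbb{C}^{d\times M}$ and set $\Phi(t) := \mathscr{W}(t)^*\!\left(Q U(t)\right) \in H^s_+(\mathbb{R};\mathbb{C}^{d\times N})$. Differentiating in $t$ and using $\tfrac{\mathrm{d}}{\mathrm{d}t}\mathscr{W}(t)^* = \mathscr{W}(t)^* (i\mathbf{T}^{(\mathbf{l})}_{U(t)^*U(t)})$ (the adjoint of $\eqref{WODER}$, valid since $\mathbf{A}_{U(t)}^* = i\mathbf{T}^{(\mathbf{l})}_{U(t)^*U(t)} = -\mathbf{A}_{U(t)}$ up to the sign, i.e. $\mathbf{A}$ is skew-adjoint), together with the equation $\eqref{MSzego}$ in the form $i\partial_t U = \Pi_{\geq 0}(UU^*U)$, I would compute $\tfrac{\mathrm{d}}{\mathrm{d}t}\Phi(t) = \mathscr{W}(t)^*\!\left(i\mathbf{T}^{(\mathbf{l})}_{U^*U}(QU) + Q\,\partial_t U\right)$. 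Now $\mathbf{T}^{(\mathbf{l})}_{U^*U}(QU) = \Pi_{\geq 0}(QU U^*U) = Q\,\Pi_{\geq 0}(UU^*U)$ by the scalar-out rule $\eqref{matrixXL^2}$, so the bracket becomes $iQ\Pi_{\geq 0}(UU^*U) + Q\partial_t U = iQ\Pi_{\geq 0}(UU^*U) - iQ\Pi_{\geq 0}(UU^*U) = 0$. Hence $\Phi(t) \equiv \Phi(0) = QU_0$, as claimed.

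Next I turn to the two limits in $\eqref{W(t)starlinea}$. For the first, write $\mathscr{W}(t)^*(\chi_\epsilon Q U(t)) - \chi_\epsilon Q U_0 = \mathscr{W}(t)^*\!\left(\chi_\epsilon QU(t) - \chi_\epsilon \cdot \mathscr{W}(t)^{-1}(QU_0)\right)$; since $\mathscr{W}(t)^{-1} = \mathscr{W}(t)^*{}^{-1} = \mathscr{W}(t)$ is unitary, it suffices to control $\|\chi_\epsilon QU(t) - \mathscr{W}(t)(\chi_\epsilon QU_0)\|$. Using $\mathscr{W}(t)^*(QU(t)) = QU_0$, i.e. $\mathscr{W}(t)(QU_0) = QU(t)$, this difference is $\chi_\epsilon QU(t) - \mathscr{W}(t)(\chi_\epsilon QU_0)$, and the point is that multiplication by the bounded scalar $\chi_\epsilon$ does \emph{not} commute with $\mathscr{W}(t)$. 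However, $[\mathscr{W}(t), \chi_\epsilon \mathrm{id}]$ acting on the fixed vector $QU_0$ can be bounded: using that $\mathscr{W}(t)$ solves a linear ODE with generator $-i\mathbf{T}^{(\mathbf{l})}_{U^*U}$, I would expand $\mathscr{W}(t)(\chi_\epsilon QU_0) - \chi_\epsilon \mathscr{W}(t)(QU_0)$ via Duhamel and estimate the commutator $[\mathbf{T}^{(\mathbf{l})}_{U(\tau)^*U(\tau)}, \chi_\epsilon]$ on $QU(\tau)$, which tends to $0$ in $L^2_+$ as $\epsilon\to 0^+$ by the approximate-identity property $\eqref{CVdominesca}$ applied to $U(\tau)^*U(\tau)\,QU(\tau) \in H^1$; dominated convergence in $\tau$ over the compact interval $[0,t]$ then closes it. For the second limit, set $\Psi_\epsilon(t) := \mathscr{W}(t)^*(\chi_\epsilon P) - e^{it\mathbf{H}^{(\mathbf{l})}_{U_0}\mathbf{H}^{(\mathbf{r})}_{U_0}}(\chi_\epsilon P)$; differentiating and using $\eqref{UniEquiLutLu0}$ one checks that $\tfrac{\mathrm{d}}{\mathrm{d}t}\mathscr{W}(t)^* = i\mathbf{T}^{(\mathbf{l})}_{U^*U}\mathscr{W}(t)^*$ while $\tfrac{\mathrm{d}}{\mathrm{d}t} e^{it\mathbf{L}_{U_0}} = i\mathbf{L}_{U_0} e^{it\mathbf{L}_{U_0}}$; using $\eqref{rel2HT-TTfor}$ to write $\mathbf{T}^{(\mathbf{l})}_{U^*U} = \mathbf{H}^{(\mathbf{l})}_U\mathbf{H}^{(\mathbf{r})}_U + \mathbf{T}^{(\mathbf{l})}_U\mathbf{T}^{(\mathbf{l})}_{U^*}$ and $\mathscr{W}(t)^*\mathbf{L}_{U(t)}\mathscr{W}(t) = \mathbf{L}_{U_0}$, the difference of the two generating relations, applied to $\chi_\epsilon P$, produces a forcing term that again involves $\mathbf{T}^{(\mathbf{l})}_{U(t)}\mathbf{T}^{(\mathbf{l})}_{U(t)^*}(\chi_\epsilon P)$; since $\Pi_{\geq 0}(\chi_\epsilon P U(t)^*) \to 0$ in $L^2_+$ as $\epsilon\to 0^+$ because $\chi_\epsilon P U(t)^* \to P U(t)^* \in L^2_-$ (by $\eqref{CVdominesca}$ and $\eqref{TVchiAto0}$), the forcing vanishes in the limit, and Grönwall over $[0,t]$ gives $\|\Psi_\epsilon(t)\| \to 0$.

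The main obstacle I anticipate is the bookkeeping around the $0$-mode: the semigroup action and the operators $\mathscr{W}(t)^*$ do not exactly preserve $\chi_\epsilon$-type test functions, so all the cancellations above are only asymptotic as $\epsilon \to 0^+$, and one must be careful that the Duhamel/Grönwall estimates are uniform in $\tau \in [0,t]$ and that the relevant products ($U^*UQU$, $PU^*$, etc.) genuinely lie in the right filtered or co-filtered spaces so that $\eqref{CVdominesca}$ and $\eqref{TVchiAto0}$ apply. The smoothing hypothesis $s>\tfrac12$ is exactly what is needed so that $U(t) \in H^s_+ \subset L^\infty$, making all these matrix products well-defined in $H^1$ and the approximate-identity convergences legitimate; the continuity of $t\mapsto U(t)$ in $H^s$ then supplies the domination needed to pass the limit through the $\tau$-integral.
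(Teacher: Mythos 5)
Your argument follows the paper's proof step by step: the identity $\mathscr{W}(t)^*(QU(t))=QU_0$ from the cancelling $t$-derivatives, then Duhamel formulas for both limits whose forcing terms reduce to $\mathbf{T}^{(\mathbf{l})}_{\Pi_{<0}(U(\tau)U(\tau)^*U(\tau))}(\chi_\epsilon Q)$ (exactly the commutator $[\mathbf{T}^{(\mathbf{l})}_{U(\tau)^*U(\tau)},\chi_\epsilon](QU(\tau))$ you invoke) and $\mathscr{W}(\tau)^*\mathbf{T}^{(\mathbf{l})}_{U(\tau)}\mathbf{T}^{(\mathbf{l})}_{U(\tau)^*}(\chi_\epsilon P)$, both killed by \eqref{TVchiAto0} together with dominated convergence in $\tau$ (a direct integral bound suffices here; Gr\"onwall is not needed). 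One step is misstated: the displayed decomposition $\mathscr{W}(t)^*(\chi_\epsilon QU(t))-\chi_\epsilon QU_0=\mathscr{W}(t)^*\bigl(\chi_\epsilon QU(t)-\chi_\epsilon\cdot\mathscr{W}(t)^{-1}(QU_0)\bigr)$ and the claim $\mathscr{W}(t)^{-1}=\mathscr{W}(t)$ are both false as written ($\mathscr{W}(t)^*$ does not commute with multiplication by $\chi_\epsilon$, and $\mathscr{W}(t)$ is unitary but not involutive); the correct justification for the reduction you then use is $\chi_\epsilon QU_0=\mathscr{W}(t)^*\bigl(\mathscr{W}(t)(\chi_\epsilon QU_0)\bigr)$ together with the isometry of $\mathscr{W}(t)^*$, which gives precisely $\|\chi_\epsilon QU(t)-\mathscr{W}(t)(\chi_\epsilon QU_0)\|_{L^2_+}$, after which your commutator estimate proceeds as in the paper.
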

\begin{proof}
Formula \eqref{matrixXL^2} yields that $\partial_t (Q U)(t)= -i\Pi_{\geq 0} \left(Q U(t) U(t)^* U(t) \right)=\mathbf{A}_{U(t)}(QU(t))$. Then
\begin{equation*}
\small
\partial_t\left(\mathscr{W}(t)^* (QU(t)) \right) = \partial_t\mathscr{W}(t)^* \left(QU(t) \right) + \mathscr{W}(t)^* (Q \partial_t U(t)) =  \mathscr{W}(t)^* (-\mathbf{A}_{U(t)} + \mathbf{A}_{U(t)})(QU(t))= 0_{d \times N},
\end{equation*}$\forall t \in \mathbb{R}$, by the previous formula and \eqref{WODER}. In particular, if $1 \leq k \leq d$, $1 \leq j \leq M$, we have
\begin{equation}\label{WstarUt}
\mathscr{W}(t)^* \left(\mathbb{E}^{(dM)}_{kj} U(t) \right) = \mathbb{E}^{(dM)}_{kj} U_0 \in H^s_+(\mathbb{R}; \mathbb{C}^{d \times N}), \quad \mathscr{W}(t)^* \left( U(t) \right) = U_0 \in H^s_+(\mathbb{R}; \mathbb{C}^{M \times N}).
\end{equation}Since $\chi_{\epsilon} \in H^{+\infty}_+(\mathbb{R}; \mathbb{C})$, we have $\chi_{\epsilon}Q\mathbf{A}_{U(t)}(U(t))=-i \Pi_{\geq 0}\left(\chi_{\epsilon}Q \Pi_{\geq 0} \left(U(t) U(t)^* U(t) \right) \right)\in H^s_+(\mathbb{R}; \mathbb{C}^{d \times N})$ by using lemma  $\ref{ABL2+forR}$. Let $V(t):= \Pi_{< 0}\left(U(t) U(t)^* U(t) \right) \in H^s_-(\mathbb{R}; \mathbb{C}^{M \times N})$. Then \eqref{Pi<0FR} and \eqref{WODER} yield that
\begin{equation}\label{derWstarChiQU}
\small
\partial_t\left(\mathscr{W}(t)^* (\chi_{\epsilon} Q U(t)) \right) = \mathscr{W}(t)^* \left(\chi_{\epsilon}Q\mathbf{A}_{U(t)}(U(t)) - \mathbf{A}_{U(t)}(\chi_{\epsilon} Q U(t))\right) = i \mathscr{W}(t)^*  \mathbf{T}^{(\mathbf{l})}_{V(t)}(\chi_{\epsilon } Q).
\end{equation}Since $\sup_{0 <\epsilon <1}\|\chi_{\epsilon}\|_{L^{\infty}(\mathbb{R}; \mathbb{C})} = 1$, Sobolev embedding theorem $H^{\frac{1}{3}}(\mathbb{R}) \hookrightarrow L^6(\mathbb{R})$ implies that
\begin{equation}\label{UniBouTVchiQ}
\sup_{0 <\epsilon <1}\sup_{|\tau|\leq |t|}\|\mathbf{T}^{(\mathbf{l})}_{V(\tau)}(\chi_{\epsilon } Q)\|_{L^2_+(\mathbb{R}; \mathbb{C}^{d \times N})} \lesssim \|Q\|_{\mathbb{C}^{d \times M}} \sup_{|\tau|\leq |t|}\|U(\tau)\|^3_{H^{\frac{1}{3}}_+(\mathbb{R}; \mathbb{C}^{M \times N})}.
\end{equation}Integrating \eqref{derWstarChiQU}, we obtain that $\mathscr{W}(t)^* (\chi_{\epsilon} Q U(t)) - \chi_{\epsilon} Q U_0 = i\int_0^t  \mathscr{W}(\tau)^*  \mathbf{T}^{(\mathbf{l})}_{V(\tau)}(\chi_{\epsilon } Q) \mathrm{d}\tau$. Then
\begin{equation*}
\|\mathscr{W}(t)^* (\chi_{\epsilon} Q U(t)) - \chi_{\epsilon} Q U_0 \|_{L^2_+(\mathbb{R}; \mathbb{C}^{d \times N})} \leq \big|\int_0^t    \|\mathbf{T}^{(\mathbf{l})}_{V(\tau)}(\chi_{\epsilon } Q)\|_{L^2_+(\mathbb{R}; \mathbb{C}^{d \times N})} \mathrm{d}\tau \big| \to 0,
\end{equation*}as $\epsilon \to 0^+$, thanks to formulas \eqref{TVchiAto0}, \eqref{UniBouTVchiQ} and Lebesgue's dominated convergence theorem. Formulas \eqref{rel2HT-TTfor}, \eqref{WODER} and \eqref{UniEquiLutLu0} yield that 
\begin{equation}\label{W'starchiP}
 \partial_t\left(\mathscr{W}(t)^* (\chi_{\epsilon} P) \right)  
 =  i\mathbf{H}^{(\mathbf{l})}_{U_0}\mathbf{H}^{(\mathbf{r})}_{U_0}\mathscr{W}(t)^* (\chi_{\epsilon} P) +  i \mathscr{W}(t)^*   \mathbf{T}^{(\mathbf{l})}_{U(t)} \mathbf{T}^{(\mathbf{l})}_{U(t)^*}   (\chi_{\epsilon}P)\in L^2_+(\mathbb{R}; \mathbb{C}^{d \times N}).
\end{equation}Since $U(t)^* \in H^s_-(\mathbb{R}; \mathbb{C}^{N\times M})$, formula \eqref{TVchiAto0} implies that $\lim_{\epsilon \to 0^+} \|\mathbf{T}^{(\mathbf{l})}_{U(t)^*}(\chi_{\epsilon}P) \|_{L^2_+(\mathbb{R}; \mathbb{C}^{d \times M})}=0$ and 
\begin{equation}\label{UnifBouTUchiP}
 \sup_{0 <\epsilon <1}\sup_{|\tau|\leq |t|}\|\mathbf{T}^{(\mathbf{l})}_{U(\tau)^*}(\chi_{\epsilon } P)\|_{L^2_+(\mathbb{R}; \mathbb{C}^{d \times M})}  \leq \|P\|_{\mathbb{C}^{d \times N}} \sup_{|\tau|\leq |t|}\|U(\tau)\|_{L^2_+(\mathbb{R}; \mathbb{C}^{M \times N})}.
\end{equation}Then we have $e^{-it \mathbf{H}^{(\mathbf{l})}_{U_0}\mathbf{H}^{(\mathbf{r})}_{U_0}} \mathscr{W}(t)^*(\chi_{\epsilon} P) - \chi_{\epsilon} P = i\int_0^t e^{-i\tau \mathbf{H}^{(\mathbf{l})}_{U_0}\mathbf{H}^{(\mathbf{r})}_{U_0}} \mathscr{W}(\tau)^* \mathbf{T}^{(\mathbf{l})}_{U(\tau)} \mathbf{T}^{(\mathbf{l})}_{U(\tau)^*}   (\chi_{\epsilon}P) \mathrm{d}\tau$ by integrating \eqref{W'starchiP}. Thanks to \eqref{TVchiAto0}, \eqref{UnifBouTUchiP} and Lebesgue's dominated convergence theorem, 
\begin{equation}
\small
\|\mathscr{W}(t)^*(\chi_{\epsilon} P) - e^{ it \mathbf{H}^{(\mathbf{l})}_{U_0}\mathbf{H}^{(\mathbf{r})}_{U_0}} (\chi_{\epsilon} P)\|_{L^2_+(\mathbb{R}; \mathbb{C}^{d \times N})} \leq  \sup_{|\tau| \leq |t|}\|U(\tau)\|_{L^{\infty}(\mathbb{R})} \big| \int_0^t \|\mathbf{T}^{(\mathbf{l})}_{U(\tau)^*}   (\chi_{\epsilon}P)\|_{L^2_+(\mathbb{R}; \mathbb{C}^{ d \times M})} \mathrm{d}\tau \big| \to 0,
\end{equation}as $\epsilon \to 0^+$.
\end{proof}
\noindent Then we are ready to prove the main theorem $\ref{MSzegoRExp}$.
\begin{proof}[Proof of theorem $\ref{MSzegoRExp}$]
If $\Psi \in  L^2_+(\mathbb{R}; \mathbb{C}^{d \times N})$ for some $d \in \mathbb{N}_+$, we have $\widehat{\Psi U_0^*} \in C_0 \bigcap L^2(\mathbb{R}; \mathbb{C}^{d \times M})$ and
\begin{equation}\label{PsiU0st}
\mathscr{I}(\mathbf{T}^{(\mathbf{l})}_{U_0^*}( \Psi)) = \lim_{\delta \to 0^+}\widehat{\Psi U_0^*} (\delta)= \widehat{\Psi U_0^*} (0) \in \mathbb{C}^{d \times M}.
\end{equation}Let $\mathscr{W} \in C^1(\mathbb{R}; \mathcal{B}_{\mathbb{C}}(L^2_+(\mathbb{R};\mathbb{C}^{d \times N})))$ denote the unique solution of \eqref{WODER}. If $1 \leq k \leq d$, $1 \leq j \leq M$, lemma $\ref{ABL2+forR}$ yields that $\chi_{\epsilon} \mathbb{E}_{kj}^{(dM)}U_0 = \mathbf{T}^{(\mathbf{l})}_{U_0}(\chi_{\epsilon} \mathbb{E}_{kj}^{(dM)}) \in L^2_+(\mathbb{R}; \mathbb{C}^{d \times N})$. For any $\epsilon \in (0, 1)$, 
\begin{equation}\label{WtPsicEU}
\small
\langle \mathscr{W}(t) \Psi, \chi_{\epsilon} \mathbb{E}_{kj}^{(dM)}U(t)\rangle_{L^2_+(\mathbb{R}; \mathbb{C}^{d \times N})} = \langle  \Psi, \chi_{\epsilon} \mathbb{E}_{kj}^{(dM)}U_0\rangle_{L^2_+} + \mathbf{r}_{\epsilon}^{(1)} = \langle \mathbf{T}^{(\mathbf{l})}_{U_0^*}( \Psi), \chi_{\epsilon} \mathbb{E}_{kj}^{(dM)} \rangle_{L^2_+(\mathbb{R}; \mathbb{C}^{d \times M})} + \mathbf{r}_{\epsilon}^{(1)} ,
\end{equation}thanks to formula \eqref{<TrG,A>}, where $\mathbf{r}_{\epsilon}^{(1)} \in \mathbb{C}$ is given by  
\begin{equation}\label{r1term}
\mathbf{r}_{\epsilon}^{(1)} := \langle \Psi, \mathscr{W}(t)^*  (\chi_{\epsilon} \mathbb{E}_{kj}^{(dM)}U(t)  ) - \chi_{\epsilon} \mathbb{E}_{kj}^{(dM)}U_0 \rangle_{L^2_+(\mathbb{R}; \mathbb{C}^{d \times N})} \to 0,
\end{equation}as $\epsilon \to 0^+$, due to Cauchy--Schwarz inequality and \eqref{W(t)starlinea}. Plugging \eqref{r1term} into \eqref{WtPsicEU}, we obtain
\begin{equation}
\lim_{\epsilon \to 0^+}\langle \mathscr{W}(t) \Psi, \chi_{\epsilon} \mathbb{E}_{kj}^{(dM)}U(t)\rangle_{L^2_+(\mathbb{R}; \mathbb{C}^{d \times N})} = \lim_{\epsilon \to 0^+} \langle \mathbf{T}^{(\mathbf{l})}_{U_0^*}( \Psi), \chi_{\epsilon} \mathbb{E}_{kj}^{(dM)} \rangle_{L^2_+(\mathbb{R}; \mathbb{C}^{d \times M})} = \left(\widehat{\Psi U_0^*}(0)\right)_{kj} \in \mathbb{C}.
\end{equation}Formula \eqref{UniEquiLutLu0} yields that $\mathscr{W}(t)^*[\mathbf{G}, \mathbf{H}^{(\mathbf{l})}_{U(t)}\mathbf{H}^{(\mathbf{r})}_{U(t)}]\mathscr{W}(t) = [\mathscr{W}(t)^* \mathbf{G}\mathscr{W}(t), \mathbf{H}^{(\mathbf{l})}_{U_0} \mathbf{H}^{(\mathbf{r})}_{U_0}]$. Then we have
\begin{equation}\label{wstCommGAW}
\small
\begin{split}
  & \mathscr{W}(t)^*[\mathbf{G}, \mathbf{A}_{U(t)}]\mathscr{W}(t)(\Phi)=-\mathscr{W}(t)^*[\mathbf{G}, \mathbf{T}^{(\mathbf{l})}_{U(t)^*U(t)}]\mathscr{W}(t)(\Phi) \in L^2_+(\mathbb{R}; \mathbb{C}^{d \times N})\\
   =& \tfrac{1}{2\pi}\sum_{k=1}^d \sum_{j=1}^M   \lim_{\epsilon \to 0^+}\langle \mathscr{W}(t) \Phi, \chi_{\epsilon} \mathbb{E}_{kj}^{(dM)}U(t)\rangle_{L^2_+(\mathbb{R}; \mathbb{C}^{d \times N})} \mathscr{W}(t)^*\left(\mathbb{E}_{kj}^{(dM)}U(t) \right)
   -i\mathscr{W}(t)^*[\mathbf{G}, \mathbf{H}^{(\mathbf{l})}_{U(t)}\mathbf{H}^{(\mathbf{r})}_{U(t)}]\mathscr{W}(t)  (\Phi)\\
   = &\tfrac{1}{2\pi}\left(\sum_{k=1}^d \sum_{j=1}^M   \lim_{\epsilon \to 0^+} \langle \mathbf{T}^{(\mathbf{l})}_{U_0^*}( \Phi), \chi_{\epsilon} \mathbb{E}_{kj}^{(dM)} \rangle_{L^2_+(\mathbb{R}; \mathbb{C}^{d \times M})}  \mathbb{E}_{kj}^{(dM)} \right)U_0  + i[\mathbf{H}^{(\mathbf{l})}_{U_0} \mathbf{H}^{(\mathbf{r})}_{U_0}, \mathscr{W}(t)^* \mathbf{G}\mathscr{W}(t)](\Phi)\\
   = & i[\mathbf{H}^{(\mathbf{l})}_{U_0} \mathbf{H}^{(\mathbf{r})}_{U_0}, \mathscr{W}(t)^* \mathbf{G}\mathscr{W}(t)](\Phi) + \tfrac{1}{2\pi}\widehat{\Phi U_0^*}(0) U_0 =\left(i[\mathbf{H}^{(\mathbf{l})}_{U_0} \mathbf{H}^{(\mathbf{r})}_{U_0}, \mathscr{W}(t)^* \mathbf{G}\mathscr{W}(t)] +\tfrac{1}{2\pi}\mathfrak{m}_{U_0}^{(\mathbf{lr})}\right)( \Phi)
\end{split}
\end{equation}$\forall \Phi \in  \mathscr{W}(t)^* \left(\mathrm{Dom}(\mathbf{G})^{d\times N} \right)$, by plugging \eqref{HATF0POsitiv}, \eqref{proMlrrl} and \eqref{PsiU0st} into formula \eqref{for[TT,G]}. For any $t \in \mathbb{R}$, set $\mathcal{K}(t):=\mathscr{W}(t)^* \mathbf{G}\mathscr{W}(t)$, then formulas \eqref{WODER} and \eqref{wstCommGAW} imply that
\begin{equation}\label{Kappa(t)}
 \tfrac{\mathrm{d}}{\mathrm{d}t} \mathcal{K}(t) =  \mathcal{K}'(t) = \mathscr{W}(t)^*[\mathbf{G}, \mathbf{A}_{U(t)}]\mathscr{W}(t) = i[\mathbf{H}^{(\mathbf{l})}_{U_0} \mathbf{H}^{(\mathbf{r})}_{U_0}, \mathcal{K} (t)] +\tfrac{1}{2\pi}\mathfrak{m}_{U_0}^{(\mathbf{lr})}.
\end{equation}We integrate \eqref{Kappa(t)} and use the definition \eqref{LlrLrl} in order to deduce that
\begin{equation}\label{ConjActFor}
\mathscr{W}(t)^* \mathbf{G}\mathscr{W}(t) = e^{it \mathbf{H}^{(\mathbf{l})}_{U_0} \mathbf{H}^{(\mathbf{r})}_{U_0}} \left(\mathbf{G} + \mathscr{L}_{U_0}^{(\mathbf{lr})}(t) \right)  e^{-it \mathbf{H}^{(\mathbf{l})}_{U_0} \mathbf{H}^{(\mathbf{r})}_{U_0}}.
\end{equation}Choose $d=M$. Thanks to the conjugation formulas \eqref{W(t)starlinea}, \eqref{WstarUt} and  \eqref{ConjActFor}, for any $z \in \mathbb{C}_+$,  
\begin{equation}\label{kjforMszR}
\small
\begin{split}
& \langle \left( \mathbf{G} -z \right)^{-1}(U(t)), \chi_{\epsilon}\mathbb{E}^{(MN)}_{kj} \rangle_{L^2_+(\mathbb{R}; \mathbb{C}^{M \times N})}  = \langle \left(\mathscr{W}(t)^* \mathbf{G} \mathscr{W}(t) -z \right)^{-1}(\mathscr{W}(t)^*U(t)), \mathscr{W}(t)^*(\chi_{\epsilon}\mathbb{E}^{(MN)}_{kj}) \rangle_{L^2_+}  \\
= & \langle  e^{it \mathbf{H}^{(\mathbf{l})}_{U_0} \mathbf{H}^{(\mathbf{r})}_{U_0}} \left(\mathbf{G} + \mathscr{L}_{U_0}^{(\mathbf{lr})}(t) -z \right)^{-1} e^{-it \mathbf{H}^{(\mathbf{l})}_{U_0} \mathbf{H}^{(\mathbf{r})}_{U_0}} (U_0), e^{it \mathbf{H}^{(\mathbf{l})}_{U_0} \mathbf{H}^{(\mathbf{r})}_{U_0}}\left( \chi_{\epsilon}\mathbb{E}^{(MN)}_{kj}\right) \rangle_{L^2_+(\mathbb{R}; \mathbb{C}^{M \times N})} + \mathbf{r}_{\epsilon}^{(2)}\\
= & \langle    \left(\mathbf{G} + \mathscr{L}_{U_0}^{(\mathbf{lr})}(t) -z \right)^{-1} e^{-it \mathbf{H}^{(\mathbf{l})}_{U_0} \mathbf{H}^{(\mathbf{r})}_{U_0}} (U_0),   \chi_{\epsilon}\mathbb{E}^{(MN)}_{kj}  \rangle_{L^2_+(\mathbb{R}; \mathbb{C}^{M \times N})} + \mathbf{r}_{\epsilon}^{(2)},
\end{split}
\end{equation}for any $1 \leq k \leq M$, $1 \leq j \leq N$, where the term $\mathbf{r}_{\epsilon}^{(2)} \in \mathbb{C}$ is given by
\begin{equation}\label{r2epsi}
\mathbf{r}_{\epsilon}^{(2)} := \langle \mathscr{W}(t)^* \left( \mathbf{G} -z \right)^{-1}(U(t)), \mathscr{W}(t)^*(\chi_{\epsilon}\mathbb{E}^{(MN)}_{kj}) - e^{it \mathbf{H}^{(\mathbf{l})}_{U_0} \mathbf{H}^{(\mathbf{r})}_{U_0}} (\chi_{\epsilon}\mathbb{E}^{(MN)}_{kj}) \rangle_{L^2_+(\mathbb{R}; \mathbb{C}^{M \times N})} \to 0, 
\end{equation}as $\epsilon \to 0^+$, due to Cauchy--Schwarz inequality and \eqref{W(t)starlinea}. Plugging formulas \eqref{kjforMszR} and \eqref{r2epsi} into the inversion formula \eqref{invForR} of the Poisson integral  $\underline{U}(t) = \mathscr{P}[U(t)]$, we obtain that
\begin{equation}\label{lrExpFor}
\begin{split}
2 \pi i \underline{U}(t,z) = & \sum_{k=1}^M \sum_{j=1}^N \lim_{\epsilon \to 0^+} \langle (\mathbf{G}-z)^{-1}(U(t)), \chi_{\epsilon}\mathbb{E}^{(MN)}_{kj}  \rangle_{L^2_+(\mathbb{R}; \mathbb{C}^{M \times N})} \mathbb{E}^{(MN)}_{kj} \\
 = & \sum_{k=1}^M \sum_{j=1}^N \langle    \left(\mathbf{G} + \mathscr{L}_{U_0}^{(\mathbf{lr})}(t) -z \right)^{-1} e^{-it \mathbf{H}^{(\mathbf{l})}_{U_0} \mathbf{H}^{(\mathbf{r})}_{U_0}} (U_0),   \chi_{\epsilon}\mathbb{E}^{(MN)}_{kj}  \rangle_{L^2_+(\mathbb{R}; \mathbb{C}^{M \times N})}\mathbb{E}^{(MN)}_{kj} \\
 = &\mathscr{I}  \left(\left(\mathbf{G} + \mathscr{L}_{U_0}^{(\mathbf{lr})}(t) -z \right)^{-1} e^{-it \mathbf{H}^{(\mathbf{l})}_{U_0} \mathbf{H}^{(\mathbf{r})}_{U_0}} (U_0)\right)\in \mathbb{C}^{M \times N}, \quad \forall z \in \mathbb{C}_+,
\end{split}
\end{equation}by \eqref{HATF0POsitiv}. The other explicit formula can be obtained by the conjugation formulas  \eqref{TF=FT}, \eqref{TconjexpHH} and \eqref{TconjinvG+Lrl-z}. Assume that $U \in C^{\infty }\left(\mathbb{R};  H^s_+(\mathbb{R}; \mathbb{C}^{M\times N}) \right)$ solves   \eqref{MSzego} with $U(0) = U_0  \in  H^s_+(\mathbb{R};  \mathbb{C}^{M\times N})$, then $V:=U^{\mathrm{T}} \in C^{\infty }\left(\mathbb{R};  H^s_+(\mathbb{R}; \mathbb{C}^{N\times M}) \right)$ is also a solution of the matrix Szeg\H{o} equation with initial datum $V(0) = U_0^{\mathrm{T}}= \mathfrak{T} (U_0) \in  H^s_+(\mathbb{R}; \mathbb{C}^{N\times M})$. Then the $(\mathbf{lr})$-explicit formula \eqref{lrExpFor} yields that
\begin{equation*}
\small
\begin{split}
& 2 \pi i\underline{U}(t, z) = 2 \pi i\underline{V}(t,z)^{\mathrm{T}} = \mathfrak{T} \mathscr{I}  \left(\left(\mathbf{G} + \mathscr{L}_{\mathfrak{T} (U_0)}^{(\mathbf{lr})}(t) -z \right)^{-1} e^{-it \mathbf{H}^{(\mathbf{l})}_{\mathfrak{T} (U_0)} \mathbf{H}^{(\mathbf{r})}_{\mathfrak{T} (U_0)}} (U_0^{\mathrm{T}})\right) \in \mathbb{C}^{M \times N} \\
 = & \mathscr{I}  \left(\mathfrak{T}\left(\mathbf{G} + \mathscr{L}_{\mathfrak{T} (U_0)}^{(\mathbf{lr})}(t) -z \right)^{-1} \mathfrak{T}  (\mathfrak{T} e^{-it  \mathbf{H}^{(\mathbf{l})}_{\mathfrak{T} (U_0)} \mathbf{H}^{(\mathbf{r})}_{\mathfrak{T} (U_0)}}\mathfrak{T} ) (U_0 )\right) =\mathscr{I}  \left(\left(\mathbf{G} + \mathscr{L}_{U_0}^{(\mathbf{rl})}(t) -z \right)^{-1} e^{-it \mathbf{H}^{(\mathbf{r})}_{U_0} \mathbf{H}^{(\mathbf{l})}_{U_0}} (U_0)\right).
\end{split}
\end{equation*}
\end{proof}

\end{document}